\documentclass[12pt, a4paper, reqno, oneside]{amsart}
\usepackage{amssymb}  
\usepackage{amsmath} 
\usepackage{amsthm}  
\usepackage{graphicx}
\usepackage{color}

\usepackage[T2A]{fontenc}
\usepackage[utf8]{inputenc}
\usepackage[russian, english]{babel}
\usepackage{cite, verbatim}
\usepackage{multirow}

\newtheorem{theorem}{Theorem}
\newtheorem*{theorem*}{Theorem}
\newtheorem{lemma}{Lemma}
\numberwithin{lemma}{section}
\newtheorem{prop}[lemma]{Proposition}

\theoremstyle{remark}

\numberwithin{equation}{section}

\newcommand{\Aut}{\operatorname{Aut}}

\newcommand{\meo}{\operatorname{meo}}

\newcommand{\pMod}[1]{\,(\mathrm{mod}\ #1)}

\begin{document}

\vspace{1cm}

\title[\textsc{On recognition of simple classical groups}]{\textsc{On recognition of simple classical groups with prime graph independence number $4$ by spectrum}}

\author{\textsc{M.A. Grechkoseeva}}
\address{Novosibirsk State University, Pirogova, 1, Novosibirsk 630090, Russia;\newline \hspace*{3mm}  Sobolev Institute of Mathematics, Koptyuga 4, Novosibirsk 630090}
\email{grechkoseeva@gmail.com}

\author{\textsc{V.M. Rodionov}}
\address{Novosibirsk State University, Pirogova, 1, Novosibirsk 630090, Russia}
\email{crowulll@gmail.com}


\begin{abstract}

Let $L$ be one of the finite simple classical groups $L_8(q)$, $U_8(q)$, $O_{10}^+(q)$, $O_{10}^-(q)$ or $O_{12}^+(q)$, with $q$ odd. We prove that every finite group having the same set of element orders as $L$ is an almost simple group with socle isomorphic to $L$. This completes the study of the recognition-by-spectrum problem for simple classical groups whose prime graph independence number is equal to $4$.

{\bf Keywords:} simple classical group, element order, recognition by spectrum.
\end{abstract}

\begingroup
\def\uppercasenonmath#1{} 
\let\MakeUppercase\relax 
\maketitle
\endgroup

\section*{Introduction}

Given a finite group $G$, we write $\omega(G)$ for the set of element orders of $G$ and refer to this set as the \textit{spectrum} of $G$. Groups with equal spectrum are said to be \textit{isospectral}. Let $\mathcal{H}(G)$ denote the set of isomorphism classes of finite groups isospectral to $G$ and set 
$h(G)=|\mathcal{H}(G)|$. We say that the \textit{problem of recognition by spectrum} is solved for $G$ if either it is proved that  $h(G)=\infty$, or the set $\mathcal{H}(G)$ is found explicitly. Observe that $h(G)=\infty$ whenever $G$ has a nontrivial normal solvable subgroup \cite[Lemma 1]{98Maz.t}, so the problem of recognition by spectrum is of interest only for groups with trivial solvable radical. 
The most basic groups having such structure are nonabelian simple groups. 

At present, the problem of recognition by spectrum is solved for all nonabelian simple groups except some classical groups (see \cite{23Survey} for a survey). These remaining groups can be shortly described in terms of the prime graph. The \textit{prime graph} of a group $G$ is the graph $GK(G)$ whose vertices are prime divisors of the order of $G$ and two vertices $r$ and $s$ are adjacent if and only if $r\neq s$ и $rs\in \omega(G)$. The \textit{independence number} $t(G)$ is the maximum number of pairwise nonadjacent vertices of $GK(G)$.
Adjacency conditions for the prime graphs of finite nonabelian simple groups, as well as the independence numbers of these graphs, are determined in  \cite{05VasVd.t, 11VasVd.t}. Also it is worth noting that the whole spectra of finite simple groups are known (see the introduction in \cite{18But.t}).

Now we return to simple groups with unsolved recognition problem. It follows from  \cite[Theorem 2.1]{23Survey}, \cite{24GrPan.t}, \cite{25Gr.t} and \cite{25Pan_arxiv} that as of 2025, the problem of recognition by spectrum is solved for all nonabelian simple groups except the groups $L$ satisfying the following conditions:

\begin{enumerate}
\item $L$ is a simple classical group over a field of odd characteristic;
  \item $4\leq t(L)\leq 13$;
 \item $GK(L)$ is connected.
\end{enumerate}

The case $t(L)\geq 5$ is studied in \cite{26Sta.t}. Our goal is to handle the case $t(L)=4$, that is, to prove the following theorem.

\begin{theorem}\label{t:t4}
Suppose that $L$ is a finite simple classical group over a field of odd characteristic and $t(L)=4$. Then the problem of recognition by spectrum is solved for~$L$.
\end{theorem}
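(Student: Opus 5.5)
The proof has two stages. First we reduce Theorem~\ref{t:t4} to the five families named in the abstract. Then, for each of these, we show that every finite $G$ with $\omega(G)=\omega(L)$ is almost simple with socle $L$.

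For the reduction, we go through the simple classical groups $L$ over fields of odd characteristic with connected prime graph, using the independence-number formulas of \cite{05VasVd.t, 11VasVd.t}, and list those with $t(L)=4$. For $q$ odd this should give, up to small dimensional coincidences, the groups $L_n(q)$ and $U_n(q)$ with $n\in\{7,8\}$; $S_{2n}(q)$ and $O_{2n+1}(q)$ with $n\in\{6,7\}$; $O^{\pm}_{2n}(q)$ with $n\in\{5,6,7\}$; and similar small ranges. We then discard every group already handled in \cite[Theorem 2.1]{23Survey}, \cite{24GrPan.t}, \cite{25Gr.t}, \cite{25Pan_arxiv}. The claim is that the survivors are exactly $L_8(q)$, $U_8(q)$, $O^{\pm}_{10}(q)$ and $O^+_{12}(q)$. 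So it suffices to prove that $h(L)<\infty$ for these, with $\mathcal H(L)$ consisting of almost simple groups with socle $L$. Once that is shown, the explicit list of such overgroups comes from the known spectra of automorphic extensions, which gives $\mathcal H(L)$ explicitly.

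For each of the five families, fix $G$ with $\omega(G)=\omega(L)$ and let $R$ be the solvable radical of $G$.

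\emph{Step 1.} Apply Vasil'ev's structural theorem, which needs $t(G)\ge3$ and $t(2,G)\ge2$; both follow from $t(L)=4$ and the adjacency criteria. It gives a unique nonabelian composition factor $S$ with $S\le G/K\le\Aut S$, where $K$ is the largest normal solvable subgroup, and it says that $S$ contains the primes of a large independent set, in particular the large primitive prime divisors $r_{k}(q)$.

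\emph{Step 2.} Identify $S$. The prime graph constraints force $S$ to be a group of Lie type whose order contains the primitive divisors $r_i(q)$ for the top indices (for example $i=8,7,6,5$ for $L_8(q)$). Order comparisons and Zsigmondy arguments rule out alternating, sporadic and exceptional candidates, and also classical groups over fields of a different characteristic or dimension. This leaves $S\cong L$, up to a few small-dimensional alternatives of the same characteristic, which we exclude by comparing the orders of unipotent or semisimple elements.

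\emph{Step 3.} Show that $K=1$. This is the step I expect to be the main obstacle. Suppose $K\ne1$. Pass to a minimal counterexample in which $K$ is an elementary abelian $p$-group on which $S$ acts faithfully-by-quotient. We then need an element of $S$ of order $a\in\omega(L)$ for which $pa\notin\omega(L)$, while the action forces $pa\in\omega(G)$.

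If $p$ is the defining characteristic, use unipotent elements and the Jordan-block lemma: a unipotent element of maximal order acts on any nontrivial module with a block of size exceeding $p^{k}$, which produces an element of order too large.

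If $p$ is a cross characteristic, apply the Hall--Higman type lemma to a Frobenius subgroup $r_i{:}c$ of $S$ with kernel of order $r_i(q)$. This gives $p\cdot c$, or $p\cdot r_i$, in $\omega(G)$. We must then check against the explicit spectrum of $L$ that this product is forbidden, for every prime $p$ dividing $|L|$.

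The difficulty is the case $p=2$ and the small primes dividing $q\pm1$, where many products do occur in $\omega(L)$. Here I would first try to find, for each $p$, one maximal torus order $a$, such as $(q^4+1)(q^3+\epsilon)/\gcd$ type values in the linear and unitary cases or $(q^5\pm1)/2$ in the orthogonal cases, with $pa\notin\omega(L)$. I would then verify that $a$ is realized by a semisimple element acting fixed-point-freely on $K$, arguing via Brauer characters or the standard lemma on coprime actions.
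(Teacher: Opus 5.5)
There is a genuine gap, and it sits exactly where the paper does all of its work. For the five remaining families, previously known results already give (Lemma~\ref{l:red}) that either $S\simeq L$ and $K=1$, or $S$ is a classical group over a field of characteristic $v\neq p$ with $S\neq L_2(u)$. So the theorem reduces to excluding the second alternative, which is Theorem~\ref{t:main}. Your Step~2 dismisses this in one clause (``order comparisons and Zsigmondy arguments rule out \dots\ classical groups over fields of a different characteristic''), and that cannot work as stated, for three reasons. First, $|G|$ is not determined by $\omega(G)$, so you have no order of $S$ to compare with $|L|$. Second, a prime can be a primitive divisor for both $q$ and $u$; for example, $k_5(5)=11\cdot 71$ divides $k_{10}(17)$. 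Third, primes can be absorbed by the solvable radical $K$ or by field automorphisms in $\overline G/S$, so $S$ may even have $t(S)>t(L)$.

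What is actually required comes in two parts. First, you must bound $t(S)$. The primitive divisors $r_i(u)$, $i\in I$, that avoid $\pi(K)$ form a coclique of $GK(L)$ (Lemma~\ref{l:adj}). Hence all but at most four of the numbers $k_i(u)$, or five in an exceptional configuration involving the prime $7$, divide $q-\eta$ (Lemmas~\ref{l:K} and~\ref{l:I}). This is then played against $q^4/2<\meo(S)\le 2u^m$, with extra care when $t(S)=5,6$. Second, for $t(S)=4$, you must use that $R_z(L)$ and $R_y(L)$ are characterized as nonadjacent primes with no common neighbours. This forces $R_z(L)\subseteq R_i(\tau u)$ and $R_y(L)\subseteq R_j(\tau u)$ as in Lemma~\ref{l:ij}. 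You then have to eliminate the resulting coincidences, such as $k_7(\varepsilon q)=k_i(\tau u)$, $k_5(\epsilon q)=k_8(u)$ and $k_5(q)=k_5(u)$, using the gcd computations of Lemmas~\ref{l:igcd}--\ref{l:igcd2} and Lemma~\ref{l:k5}. Nothing in your proposal substitutes for this.

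Your Step~3, which you single out as the main obstacle, is not needed. Once $S\simeq L$, the triviality of $K$ is \cite[Theorem 3.6]{23Survey}.

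Your reduction is also inaccurate. By \cite[Tables 2 and 3]{11VasVd.t}, the odd-characteristic classical groups with $t(L)=4$ are $L_3^\pm(q)$, $L_7^\pm(q)$, $L_8^\pm(q)$, $S_8(q)$, $O_9(q)$, $O_8^-(q)$, $O_{10}^\pm(q)$ and $O_{12}^+(q)$. By contrast, $S_{12}(q)$, $O_{13}(q)$ and $O_{12}^-(q)$ have $t=5$, and $S_{14}(q)$, $O_{15}(q)$ and $O_{14}^\pm(q)$ have $t=6$ (Tables~\ref{tab:t5} and~\ref{tab:t6}). The groups $L_3^\pm(q)$, $L_7^\pm(q)$, $S_8(q)$, $O_9(q)$ and $O_8^-(q)$ drop out because their prime graphs are disconnected for every $q$ \cite{81Wil}, a case already settled. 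They are not eliminated by an unspecified appeal to prior papers.
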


By  \cite[Tables 2 and 3]{11VasVd.t}, the simple classical groups $L$ in odd characteristic with $t(L)=4$ are in the following series:  $L_3^\pm(q)$, $L_7^\pm(q)$, $L_8^\pm(q)$, $S_8(q)$, $O_9(q)$, $O_8^-(q)$, $O_{10}^\pm(q)$, and $O_{12}^+(q)$ (we denote simple classical groups mostly according to \cite{85Atlas} but also write $L_n^+(q)$ for $L_n(q)$ and $L_n^-(q)$ for $U_n(q)$). The groups $L_3^\pm(q)$, $L_7^\pm(q)$, $S_8(q)$, $O_9(q)$, and $O_8^-(q)$ have disconnected prime graphs for all $q$ by \cite{81Wil}. So we are left with the groups $L_8^\pm(q)$,  $O_{10}^\pm(q)$ and $O_{12}^+(q)$. 

Let $L$ be an arbitrary nonabelian simple group, and suppose that $\mathcal H(L)\subseteq \mathcal{AS}(L)$, where $\mathcal{AS}(L)$ is the set of isomorphism classes of almost simple groups with socle $L$. Then $h(L)<\infty$. Furthermore, since the set $\mathcal H(L)\cap \mathcal{AS}(L)$ is explicitly known (see \cite[Theorem 3.7]{23Survey}), the set $\mathcal H(L)$ is explicitly known as well, thereby the problem of recognition by spectrum is solved for $L$. 

These two observations  show that Theorem \ref{t:t4} will follow from the next one. 

\begin{theorem}\label{t:cor}
Let $L$ be one of the simple groups  $L_8^\pm(q)$,  $O_{10}^\pm(q)$, and $O_{12}^+(q)$, with $q$ odd. Suppose that $G$ is a finite group such that $\omega(G)=\omega(L)$. Then $G$ is an almost simple group with socle isomorphic to $L$. 
\end{theorem}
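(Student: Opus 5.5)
The plan follows the standard scheme for recognition by spectrum. It has three stages: first show that $G$ has a unique nonabelian composition factor $S$; then identify $S\simeq L$; finally show that the solvable radical of $G$ is trivial and that $G/S$ embeds in $\mathrm{Out}(L)$.

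\textbf{Stage 1: a nonsolvable factor $S$.} Since $t(L)=4\geq 3$, Vasil'ev's structure theorem applies, provided its extra hypothesis on the characteristic (a $2$-independence number of at least $2$) holds. That hypothesis can be checked from the adjacency criteria of \cite{05VasVd.t, 11VasVd.t}. The theorem gives a unique nonabelian composition factor $S$ with
\[
S\leq \overline G=G/K\leq \Aut(S),
\]
where $K$ is the solvable radical of $G$. It also says that $S$ contains, up to at most one exception, all primes of a maximal coclique of $GK(L)$ together with $2$.

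For $L$ we take the four primes forming a coclique. These are primitive prime divisors $r_i$ of $q^i-1$, with $i$ running over suitable indices determined by the Dynkin type. For example, for $L_8(q)$ we use $r_8,r_7,r_6,r_5$ (with the usual adjustments in the unitary case), and for $O_{10}^{+}(q)$ we use $r_8,r_5,r_4,r_3$ or similar.

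\textbf{Stage 2: identifying $S$.} Using the classification of finite simple groups, we run through the candidates for $S$. Its order must divide $|L|$, it must contain at least three of the large primitive prime divisors, and we must have $\omega(S)\subseteq\omega(L)$.

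Alternating and sporadic groups are excluded by order and element-order bounds, and exceptional groups by comparing the orders of maximal tori. For groups of Lie type, the standard result of Zavarnitsine/Vasil'ev–Grechkoseeva on primitive divisors forces $S$ to be defined over a field of characteristic $p$, where $q$ is a power of $p$. The Lie rank and field size are then pinned down by the largest primitive divisor $r_{e}$ with $e$ maximal.

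This leaves a short list of classical groups of the same characteristic with close dimension, e.g.\ $L_9^\pm(q)$, $S_8(q)$, $O_9(q)$, $O_{11}(q)$, $O_{12}^-(q)$, or groups over subfields. Each is eliminated by exhibiting an element order in $\omega(S)\setminus\omega(L)$, typically a product $r_ir_j$ or $p\cdot r_i$, or an order $p^k\cdot r_i$ coming from unipotent elements of large order. Where needed, these orders are read off from the known spectra cited in \cite{18But.t}.

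\textbf{Stage 3: $K=1$ and the structure of $G$.} This is the step I expect to be the main obstacle, especially $K=1$; the distinction between the groups $L_8^\pm$, $O_{10}^\pm$ and $O_{12}^+$ matters here.

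Suppose $K\neq1$. Passing to a quotient, we may assume $K$ is an elementary abelian $t$-group on which $S$ acts faithfully enough. We then apply lemmas of the form: if a group of Lie type in characteristic $p$ acts on a module in characteristic $t\neq p$, then the semidirect product contains an element of order $t\cdot r$ for a primitive divisor $r$ nonadjacent to $t$ in $GK(L)$. Here one uses Frobenius subgroups $r_i{:}\,c$ inside $S$, together with Hall–Higman type arguments.

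In the case $t=p$, we use unipotent elements. The action of a Singer-like cycle of order $r_e$ on the module yields an element of order $p\,r_e\notin\omega(L)$, or an element of order $p^{k+1}$ exceeding the exponent of the Sylow $p$-subgroup of $L$. The delicate points are small $q$ (such as $q=3$) and the primes $2$ and $p$, where the primitive-divisor arguments are weakest. These I would handle by direct spectrum comparisons, possibly using the fact that $r_e$ is adjacent to no odd prime other than $p$ in $L$.

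Finally, with $K=1$, we know $S\leq G\leq\Aut(S)$ and $S\simeq L$, so $G$ is almost simple with socle $L$, which proves Theorem \ref{t:cor}.
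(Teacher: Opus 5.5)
Your plan has a genuine gap in Stage~2. You claim that a ``standard result of Zavarnitsine/Vasil'ev--Grechkoseeva on primitive divisors'' forces a Lie-type $S$ to be defined in characteristic $p$. No such result exists for these $L$. That claim is exactly what was open for these groups, and it is the content of the paper's Theorem~\ref{t:main}.

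The literature gives the reverse reduction, packaged in Lemma~\ref{l:red}:
\begin{itemize}
\item if $S\simeq L$, then $K=1$ \cite[Theorem 3.6]{23Survey};
\item if $S\not\simeq L$, then $S$ is of Lie type in characteristic $v\neq p$ \cite[Theorem 3.8]{23Survey};
\item $S$ is not exceptional \cite{24GrPan.t}, and $S\neq L_2(u)$ \cite{20GrZv.t}.
\end{itemize}
So your same-characteristic eliminations and your Stage~3 ($K=1$) are the already-known part. The paper's proof of Theorem~\ref{t:cor} is just Lemma~\ref{l:red} plus Theorem~\ref{t:main}. The remaining case, a classical $S$ over a field of order $u=v^\alpha$ with $v\neq p$, cannot be dismissed by general primitive-divisor considerations. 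A prime $r_i(\varepsilon q)$ can perfectly well be some $r_j(u)$ for unrelated $q$ and $u$, so nothing forces $v=p$. The main obstacle is therefore this cross-characteristic case, not $K=1$.

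Closing it needs the whole argument of Sections~\ref{s:large}--\ref{s:t4}, none of which appears in your plan:
\begin{itemize}
\item Take the prime sets $R_z(L)$ and $R_y(L)$ of Table~\ref{tab:zy}: they are nonadjacent, have no common neighbours, and $R_z(L)$ is not adjacent to $2$. Show both avoid $\pi(K)\cup\pi(\overline G/S)$. Then $k_z(L),k_y(L)\in\omega(S)$, which gives \eqref{e:kum}.
\item Show $t(S)\geq 4$.
\item Exclude $t(S)\geq 5$. The primes $r_i(u)$, $i\in I$, lying outside $\pi(K)$ form a coclique in $GK(L)$, so there are at most four of them, up to the prime $7$. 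Those inside $\pi(K)$ force $k_i(u)\mid q-\eta$, which contradicts the size bounds.
\item For $t(S)=4$, locate $R_z(L)\subseteq R_i(\tau u)$ and $R_y(L)\subseteq R_j(\tau u)$. Then eliminate $L_7^\tau(u)$, $L_8^\tau(u)$, $S_8(u)$, $O_9(u)$, $O_8^-(u)$, $O_{10}^\tau(u)$ and $O_{12}^+(u)$ one by one. This uses inequalities and number-theoretic equations such as $k_7(\varepsilon q)=k_i(\tau u)$ and $k_5(\epsilon q)=k_8(u)$, settled by explicit gcd computations (Lemmas~\ref{l:igcd1}, \ref{l:igcd2}, \ref{l:k5}).
\end{itemize}

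There is also a smaller error. From $\omega(G)=\omega(L)$ you cannot conclude that $|S|$ divides $|L|$. You only get $\pi(S)\subseteq\pi(L)$ and $\omega(S)\subseteq\omega(L)$, so order-divisibility arguments are not available for excluding candidates.
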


Assume that $L$ is a simple group as in the hypothesis of Theorem \ref{t:cor},  $\omega(G)=\omega(L)$ and $G$ is not an almost simple group with socle isomorphic to $L$. By a series of previous results (see Lemma \ref{l:red} in Section \ref{s:prel}), this implies that some nonabelian composition factor of $G$ is a classical group in characteristic other than the defining characteristic of $L$. So Theorem \ref{t:cor} is a corollary of the following result. 

\begin{theorem} \label{t:main}
Let $L$ be one of the simple groups  $L_8^\pm(q)$,  $O_{10}^\pm(q)$, and $O_{12}^+(q)$, with $q$ odd. Suppose that $G$ is a finite group such that $\omega(G)=\omega(L)$. If $S$ is a nonabelian composition factor of $G$ and $S$ is a classical group, then the defining characteristic of $S$ divides $q$.
\end{theorem}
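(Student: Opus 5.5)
We argue by contradiction. Suppose $S$ is a nonabelian composition factor of $G$ that is a classical group over a field of order $u$, where $u$ is a power of a prime $v$ with $v\nmid q$; write $q=p^m$. The inputs we take from the standard structure theory for groups isospectral to simple groups with $t(L)\geq 3$ (Vasil'ev's theorem and its refinements, collected in Lemma~\ref{l:red}) are these. $G$ has a unique nonabelian composition factor $S$, with $S\le \overline G=G/K\le \Aut S$, where $K$ is the solvable radical. Moreover $t(S)\ge t(L)-1=3$, and $S$ contains a large set of primitive prime divisors of $q^i-\varepsilon^i$ (with $\varepsilon=\pm$ as appropriate). Concretely, if $r_i$ denotes a primitive divisor of $q^i-1$, then $S$ must contain, among others, the primes $r_i$ with $i$ in the ``top'' part of the independence set: $\{r_7,r_8\}$ or $\{r_{14},r_8,r_{10}\}$ for $L_8^\pm(q)$; $r_8,r_{10}$ (resp.\ $r_{10},r_{12}$ for $O_{10}^-$) for the orthogonal groups of dimension $10$; and $r_{10},r_{12}$ for $O_{12}^+(q)$. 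In addition $p$ and $2$ are controlled by the adjacency criterion.

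The first step is to bound $S$. Since $t(S)\ge 3$ while the prime graph of $S$ embeds in $GK(L)$, and $S$ is classical of characteristic $v\ne p$, we have $\omega(S)\subseteq\omega(L)$. Hence the orders $|S|$ and the element orders of $S$ must fit inside those of $L$. We compare the largest element orders and exponents. A classical group of dimension $n$ over $\mathbb F_u$ contains elements of order about $u^{n-1}$ (Singer-type cycles), which are bounded by the maximal element order of $L$, roughly $q^{7}$ or $q^{6}$. The group $S$ must also contain $r_i$ for two or three large $i$ (for example $r_8$ and $r_{10}$) that are pairwise nonadjacent in $S$, which in classical groups forces the dimension of $S$ to be small relative to the ``orders'' $e(r_i,u)$. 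Writing $k_i=e(r_i,u)$, the prime $r_i$ divides $u^{k_i}-1$ and $q^i-1$, so $r_i\le\min$ of these. Using the Zsigmondy structure we derive that the $u$-orders of $r_i$ for the three nonadjacent primes must be pairwise incompatible. This yields a finite list of candidate pairs (type and dimension of $S$, and rough size of $u$ against $q$).

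The second step, for each candidate, is to find an arithmetic contradiction. We would choose a Sylow $v$-subgroup argument. Since $v\ne p$, the prime $v$ equals some $r_j$ for $L$, i.e.\ $v$ divides $q^j-1$ for some $j\le 12$. The $v$-part of $\omega(L)$ is then governed by the tori of $L$: the $v$-exponent of $L$ is $(q^j-\epsilon)_v$ times a small power coming from $v\le n$. Meanwhile $S$ contains unipotent elements of order $v^{\lceil\log_v n\rceil}$ and, more decisively, elements of order $v\cdot r$ for primes $r$ dividing $u^{k}-1$ with $k$ near $n-2$. Such elements lie in centralizers of root subgroups. We check that $v$ must then be adjacent in $GK(L)$ to some $r_i$ with $i$ large, for instance $r_8$ or $r_{10}$. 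The adjacency criterion for $L$ forbids this unless $j+i$ fits in the dimension. That in turn forces $v$ to be a divisor of $q^j-1$ with $j$ small, typically $j\in\{1,2\}$, and conversely restricts $n$. In the remaining cases we compare $2$-parts or $v$-parts of exponents: the $v$-exponent of $S$ grows with $u$, while $v\mid q\pm1$ gives a fixed relation. We expect to conclude by checking an explicit number $a\in\omega(S)\setminus\omega(L)$, such as a product of a unipotent order and a semisimple order.

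The main obstacle is the finite but sizeable case analysis in which $S$ is a small-dimensional group over a large field, for example $L_2(u)$ or $L_3^\pm(u)$ with $u$ comparable to $q^{4}$ or $q^{5}$. Here the independence number alone is not restrictive, so the dimension bounds above give nothing. For these we would exploit that $t(S)\ge3$ already excludes $L_2(u)$ in most cases. We would then use that the three nonadjacent primes of $S$ must be $r_8,r_{10}$ (or $r_{14}$) of $L$, which forces equalities such as $u^{3}-1$ and $u^2-1$ to be divisible by $\Phi_8(q)$- and $\Phi_{10}(q)$-primes simultaneously. We would finish with Zsigmondy and size comparisons, e.g.\ $\Phi_{10}(q)\Phi_8(q)\mid u^{k}\pm1$ with $u^{k}$ bounded by the maximal element order of $L$. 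The generic cases of larger dimension should be handled uniformly by the order-of-magnitude argument of the first step.
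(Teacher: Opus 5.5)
Your outline is not yet a proof: the two steps that carry all the weight are asserted rather than argued.

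First, a smaller slip. The two prime classes of $L$ not adjacent to $2$ and to $p$ (Table~\ref{tab:zy}) are $R_8,R_7$ for $L_8(q)$, $R_8,R_{14}$ for $U_8(q)$, $R_8,R_5$ for $O_{10}^+(q)$, $R_8,R_{10}$ for $O_{10}^-(q)$ and $R_{10},R_5$ for $O_{12}^+(q)$. Note that $r_{10}(q)\notin\pi(O_{10}^+(q))$, and $r_{12}(q)$ divides neither $|O_{10}^-(q)|$ nor $|O_{12}^+(q)|$.

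\textbf{Your first step does not bound $S$.} It cannot produce a finite list of candidates for $S$. The bound $\meo(S)\le\meo(L)<2q^7$ only gives $u^{n-1}\lesssim q^7$, which allows dimension $n$ of order $\log_u q$ when $u$ is small. To bound $n$ you must bound $t(S)$, i.e.\ turn a large coclique $\{r_i(u)\}$ of $GK(S)$ into a large coclique of $GK(L)$. This is exactly where your remark that $GK(S)$ ``embeds'' in $GK(L)$ breaks down. $GK(S)$ is a subgraph of $GK(L)$ but not an induced one: $rs\notin\omega(S)$ is compatible with $rs\in\omega(G)$ when $r$ or $s$ divides $|K|$, or via $\overline G/S$. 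The paper handles this in three steps:
\begin{itemize}
\item it shows $\pi(K)\setminus\{v\}\subseteq\pi(q-\eta)$ (Lemma~\ref{l:K});
\item it applies Lemma~\ref{l:adj};
\item it splits $I=J\cup(I\setminus J)$, where $k_i(u)\mid q-\eta$ for $i\notin J$, while $|J|\le 4$ outside one configuration involving the prime $7$ (Lemma~\ref{l:I}).
\end{itemize}
Together with $q^4/2<2u^m$ this rules out $t(S)\ge5$, and Lemma~\ref{l:large} gives $t(S)\ge 4$.

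\textbf{The remaining case cannot be settled by size.} After this reduction, $S$ is one of $L_7^\tau(u)$, $L_8^\tau(u)$, $S_8(u)$, $O_9(u)$, $O_8^-(u)$, $O_{10}^\tau(u)$, $O_{12}^+(u)$ with $u$ unbounded. Here the order-of-magnitude comparison you propose for ``the generic cases of larger dimension'' cannot work, because the relevant cyclotomic numbers can have exactly the same size. One must exclude genuine coincidences such as $k_7(\varepsilon q)=k_i(\tau u)$ with $i\in\{5,8,10\}$, $k_5(\epsilon q)=k_8(u)$, or $\{k_5(q),k_{10}(q)\}=\{k_5(u),k_{10}(u)\}$. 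The paper does this as follows:
\begin{itemize}
\item it matches $R_z(L)$ and $R_y(L)$ with classes $R_i(\tau u)$, $R_j(\tau u)$ (Lemma~\ref{l:ij});
\item it uses cyclic Sylow subgroups to get $\pi(\overline G/S)\subseteq\pi(q(q^4-1))$ (Lemma~\ref{l:aut}), so that further classes such as $R_5(\varepsilon q)$, $R_6(\varepsilon q)$, $R_3(\varepsilon q)$ lie in $\pi(S)$ with $t(r,S)=4$;
\item it kills the resulting Diophantine equations by explicit gcd computations (Lemmas~\ref{l:k5}, \ref{l:igcd}, \ref{l:igcd1}, \ref{l:igcd2}).
\end{itemize}

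Your Sylow-$v$ step does not replace this. The primes $r$ with $vr\in\omega(S)$ are primitive divisors of $u^k-1$, and nothing you have established ties them to $r_i(q)$ with large $i$. So no clash with the adjacency criterion of $L$ follows, and ``we expect to find $a\in\omega(S)\setminus\omega(L)$'' is the conclusion, not an argument.

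Finally, the difficulty is misplaced. $L_2(u)$ is already excluded by Lemma~\ref{l:red}, and $L_3^\pm(u)$ by $t(S)\ge4$. The hard case is $S$ of the same rank as $L$ in another characteristic, with $u$ and $q$ of comparable size.
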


The paper is organized as follows. Section~\ref{s:prel} contains notation, definitions and some generic facts concerning the structure of groups isospectral to simple groups. In Section~\ref{s:graph} we collect some calculations and some properties of the primes graphs and spectra of classical group $L$ with $t(L)=4$. The proof of Theorem \ref{t:main} is divided into two cases, depending on whether $t(S)>4$ or not. These cases are considered in Sections \ref{s:large} and \ref{s:t4} respectively. Finally, at the end of Section \ref{s:t4}, we explain how to find $h(L)$ and $\mathcal{H}(L)$ (if $h(L)<\infty$) for a group $L$ of Theorem \ref{t:t4} by using the information in \cite{23Survey}.

\section{Notation and some general results} \label{s:prel}

Let $a$ and $b$ be nonzero integers, $r$ a prime. The greatest common divisor of $a$ and $b$ is denoted by $(a, b)$. The set of prime divisors of $a$ is denoted by $\pi(a)$. We write $(a)_r$ for the $r$-part of $a$, that is, the highest power of  $r$ dividing $a$ and put $(a)_{r'}=a/(a)_r$. If $r$ is odd and coprime to $a$, then $e(r, a)$ is the multiplicative order of $a$ modulo $r$. If $a$ is odd, then $e(2, a) = 1$ if $a\equiv 1\pMod 4$ and $e(2, a) =2$ if $a\equiv 3\pMod 4$. The following lemma is well known. 

\begin{lemma}\label{l:r-part}
Suppose that $a$ is an integer, $|a|>1$, and $m$ is a positive integer. 
\begin{enumerate}
\item  If $r$ is an odd prime and  $a\equiv 1\pMod r$, then $(a^m-1)_r=(m)_r(a-1)_r$.
\item  If $a\equiv 1\pMod 4$ or $m$ is odd, then $(a^m-1)_2=(m)_2(a-1)_2$.
\item If $a\equiv -1\pMod 4$ and $m$ is even, then $(a^m-1)_2=(m)_2(a+1)_2$.
\end{enumerate}
\end{lemma}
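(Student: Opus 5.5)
The plan is to prove the three items by the standard lifting-the-exponent argument. First we reduce to the case where $m$ is a prime power; then we treat the prime $r$ itself separately.

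\emph{Reduction to prime powers.} Write $a^m-1=(a-1)(a^{m-1}+\dots+a+1)$, and more generally factor along a chain $a\mapsto a^{m_1}\mapsto a^{m_1m_2}\mapsto\cdots$ of prime steps. Because $r$-parts are multiplicative, it suffices to control $(a^s-1)_r$ in terms of $(a-1)_r$ for a single prime $s$. At each step the base again satisfies the hypothesis: in item (1), $a^k\equiv 1\pmod r$; in item (2), $a^k\equiv 1\pmod 4$ when $a\equiv 1\pmod 4$, and in the case $m$ odd all steps $s$ are odd.

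\emph{Item (1).} Let $a\equiv 1\pmod r$ with $r$ odd, and let $s$ be a prime.
\begin{itemize}
\item If $s\ne r$, then $1+a+\dots+a^{s-1}\equiv s\not\equiv 0\pmod r$. Hence $(a^s-1)_r=(a-1)_r$.
\item If $s=r$, write $a=1+tr$. Expanding binomially,
\[ \sum_{i=0}^{r-1}a^i\equiv r+tr\binom{r}{2}\equiv r\pmod{r^2}, \]
since $r\mid\binom r2$ for odd $r$. So this sum has $r$-part exactly $r$.
\end{itemize}
Combining the steps gives $(a^m-1)_r=(m)_r(a-1)_r$.

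\emph{Item (2).}
\begin{itemize}
\item If $m$ is odd, then $1+a+\dots+a^{m-1}$ is a sum of $m$ odd terms, hence odd. So $(a^m-1)_2=(a-1)_2$.
\item If $a\equiv 1\pmod 4$, the odd steps are handled as above. For a step with $s=2$ we have $a^2-1=(a-1)(a+1)$, where $a+1\equiv 2\pmod 4$ contributes exactly one factor $2$. Also $a^2\equiv 1\pmod 4$ again, so the induction continues.
\end{itemize}

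\emph{Item (3).} Let $a\equiv -1\pmod 4$ and $m=2k$. Since $b=a^2\equiv 1\pmod 8$, item (2) applies to $b$ and gives
\[ (a^m-1)_2=(b^k-1)_2=(k)_2\,(a^2-1)_2=(k)_2\,(a-1)_2(a+1)_2. \]
Here $(a-1)_2=2$, so the right-hand side equals $(m)_2(a+1)_2$.

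The only delicate point is the congruence modulo $r^2$ in the case $s=r$. This is exactly where $r$ odd is used, together with the analogous factor-$2$ bookkeeping for $r=2$. Everything else is routine. The hypothesis $|a|>1$ only guarantees $a^m-1\ne 0$; negative $a$ causes no change, since all arguments are congruences.
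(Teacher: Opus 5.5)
The paper gives no proof of this lemma (it is quoted as well known), so there is no argument to compare against. Your proof is the standard lifting-the-exponent argument and it is correct: the telescoping reduction to prime steps, the congruence $\sum_{i=0}^{r-1}a^i\equiv r\pmod{r^2}$ for odd $r$, the bookkeeping of factors of $2$, and the reduction of (3) to (2) via $b=a^2\equiv 1\pmod 8$ with $(a-1)_2=2$ all check out.

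The only slip is in the first bullet of item (2). When $m$ is odd but $a$ is even, $1+a+\dots+a^{m-1}$ is not a sum of $m$ odd terms. It is $1$ plus even terms, hence still odd; in fact both sides of the identity are then $1$. So the conclusion is unaffected.
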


Let $|a|>1$ and $i\geq 1$. A prime $r$ is a \textit{primitive prime divisor} of $a^i-1$ if $e(r, a) = i$. The set of all primitive prime divisors of $a^i-1$ is denoted by  $R_i(a)$, and writing $r_i(a)$, we mean an arbitrary primitive divisor.  Bang \cite{86Bang} and Zsigmondy \cite{Zs} proved that primitive divisors exist for  almost all pairs $a$ and $i$. 

\begin{lemma}[Bang--Zsigmondy]
Suppose that $a$ is an integer, $|a|>1$. For every positive integer $i$, the set $R_i(a)$ is not empty, excepting the cases when $a=2$ and $i\in\{1,6\}$, or  $a=-2$ and $i\in\{2,3\}$, or $a=3$ and $i=1$, or $a=-3$ and $i=2$.
\end{lemma}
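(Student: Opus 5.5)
The statement to prove is the Bang--Zsigmondy lemma: $R_i(a)\neq\varnothing$ except for the listed pairs. I would reduce everything to cyclotomic polynomials. Write $a^i-1=\prod_{d\mid i}\Phi_d(a)$. A prime $r$ divides $\Phi_i(a)$ exactly when $r\mid a^i-1$ and $e(r,a)$ divides $i$ (here $r\nmid a$). Moreover, if $r\mid\Phi_i(a)$ but $r$ is not primitive, then $e=e(r,a)$ is a proper divisor of $i$.

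\textbf{Step 1: non-primitive primes divide $\Phi_i(a)$ only to the first power.} Take $r\mid\Phi_i(a)$ with $e<i$. For odd $r$, apply Lemma~\ref{l:r-part}(1) to $b=a^e$. This gives $(a^i-1)_r=(i/e)_r(a^e-1)_r$, so $r\mid i/e$, and $i/e=r^k$. Comparing $(a^{i}-1)_r$ with $(a^{i/r}-1)_r$ then shows that $r$ divides $\Phi_i(a)$ exactly once. Hence $r$ is the largest prime divisor of $i$, and there is at most one such prime.

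For $r=2$, use Lemma~\ref{l:r-part}(2),(3) in the same way. The only way to get a higher power of $2$ is $i=2$ with $a\equiv -1\pmod 4$. There $\Phi_2(a)=a+1$, and this case must be handled directly.

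\textbf{Step 2: conclude from a size bound.} If $R_i(a)=\varnothing$, Step 1 forces $|\Phi_i(a)|\le p$, where $p$ is the largest prime divisor of $i$ (or $|\Phi_i(a)|$ is a power of $2$ when $i=2$). On the other hand,
\[
|\Phi_i(a)|=\prod_{\zeta}|a-\zeta|\ \ge\ (|a|-1)^{\varphi(i)},
\]
with the product over primitive $i$th roots of unity $\zeta$. A sharper estimate comes from pairing conjugates: for $i\ge3$ and $\operatorname{Re}\zeta<1$, each factor satisfies $|a-\zeta|>|a|-1$, and strictly more for suitable $\zeta$.

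\textbf{Step 3: the small cases.} When $|a|\ge3$, the bound $(|a|-1)^{\varphi(i)}\ge 2^{\varphi(i)}>p$ kills everything except small $i$, which are checked by hand. These checks produce $a=3,i=1$ and $a=-3,i=2$. The case $i=1$ is trivial except when $|a-1|=1$ or $a-1$ is a power of $2$ with no odd part; here the convention $e(2,a)$ matters, so I would read the definitions carefully. When $|a|=2$, the general bound is too weak, so I would use the refined estimate $|\Phi_i(\pm2)|\ge 2^{\varphi(i)}/\text{(small constant)}$ coming from $\prod(1-\zeta/a)$. Together with $\varphi(i)\ge\sqrt{i/2}$, this limits the check to $i\le 6$ or so. Direct computation then yields exactly $a=2$ with $i\in\{1,6\}$ and $a=-2$ with $i\in\{2,3\}$.

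The main obstacle is the $|a|=2$ range. There the lower bound on $|\Phi_i(a)|$ must be made explicit enough to beat the largest prime of $i$, and the parity conventions for $r=2$ and negative $a$ must be tracked carefully. In any case, this is classical, and the paper may simply cite \cite{86Bang,Zs}.
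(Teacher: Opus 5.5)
The paper does not prove this lemma: it quotes it as the classical theorem of Bang and Zsigmondy, with the paper's convention for $e(2,a)$ built into the list of exceptions, and uses it as a black box. You instead reconstruct the standard cyclotomic proof, and the outline is sound. Your Step~1 is exactly the content of Lemma~\ref{l:kiPhi} (Roitman's formula $k_i(a)=\Phi_i(a)/(r,\Phi_l(a))$), which the paper also cites. Granting that lemma, for $i\ge3$ the condition $R_i(a)=\varnothing$ means $\Phi_i(a)\in\{1,r\}$, where $r$ is the largest prime divisor of $i$, so everything reduces to a lower bound for $\Phi_i(a)$. The cases $i=1,2$ are read off from the definition and give exactly $a\in\{2,3\}$ and $a\in\{-2,-3\}$. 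Citing is shorter; your route shows why the exceptions are what they are.

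A few points need tightening.

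\textbf{The divisibility criterion for $\Phi_i(a)$.} Your opening characterization is wrong. The condition ``$r\mid a^i-1$ and $e(r,a)\mid i$'' describes $r\mid a^i-1$, not $r\mid\Phi_i(a)$; for example $3\mid 2^4-1$ but $\Phi_4(2)=5$. What you actually use is that a non-primitive odd $r\mid\Phi_i(a)$ forces $i=e\,r^k$ with $k\ge1$. To get that $i/e$ is a pure power of $r$, compare with $a^{i/s}-1$ for a prime $s\ne r$ dividing $i/e$: by Lemma~\ref{l:r-part}(1) applied to $a^e$ the $r$-parts of $a^i-1$ and $a^{i/s}-1$ agree, yet $\Phi_i(a)$ divides $(a^i-1)/(a^{i/s}-1)$. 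You also need $e\mid r-1$ to conclude that $r$ is the largest prime divisor of $i$.

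\textbf{The prime $2$.} Under the paper's convention $e(2,a)=2$ for $a\equiv3\pmod4$. So for $i=2$ and $a\equiv-1\pmod4$ the prime $2$ is primitive, and the case you flag is not exceptional. The relevant fact is that for $i\ge3$ the prime $2$ is never primitive and divides $\Phi_i(a)$ at most once.

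\textbf{The case $|a|=2$.} A factorwise estimate of $\prod_\zeta(1-\zeta/2)$ only gives $|\Phi_i(2)|\ge1$. The uniform constant comes from the product formula $\Phi_i(2)=\prod_{d\mid i}(2^d-1)^{\mu(i/d)}$, which gives $\Phi_i(2)\ge 2^{\varphi(i)}\prod_{d\ge1}(1-2^{-d})>0.28\cdot 2^{\varphi(i)}$. Then compare with $r$ using $\varphi(i)\ge r-1$, not $\varphi(i)\ge\sqrt{i/2}$. The latter, played against $r\le i$, only wins once $i$ is well over $100$, so it does not reduce the check to $i\le6$. With $\varphi(i)\ge r-1$ only $i\in\{3,4,5,6,10\}$ remain. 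The case $a=-2$ reduces to $a=2$ via:
$\Phi_i(-x)=\Phi_{2i}(x)$ for odd $i\ge3$;
$\Phi_i(-x)=\Phi_{i/2}(x)$ for $i\equiv2\pmod4$, $i\ge6$;
$\Phi_i(-x)=\Phi_i(x)$ for $4\mid i$.
Checking these few values yields exactly $(2,6)$ and $(-2,3)$.
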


For $i\geq 3$, we write $k_i(a)$ to denote the product of $(a^i-1)_r$ over all $r\in R_i(a)$. As usual, $\varphi(x)$ and $\Phi_i(x)$ denote the Euler's totient  function and the $i$th cyclotomic polynomial respectively. It is well known that  $\Phi_i(-x)=\Phi_{2i}(x)$ if $i\geq 3$ is odd and $\Phi_i(-x)=\Phi_i(x)$ if $i$ is divisible by $4$. Also $\Phi_i(a)>0$ for all $|a|>1$ and $i\geq 3$ 

\begin{lemma}\label{l:kiPhi}
Let $a$ and $i$ be integers, $|a|>1$ and $i\geq 3$. 
Let $r$ be the largest prime divisor of $i$ and put $l = (i)_{r'}$. Then
$$k_i(a) = \dfrac{\Phi_i(a)}{(r,\Phi_l(a))}.$$
Furthermore, if $l$ does not divide $r-1$, then $(r, \Phi_l(a)) = 1$.
\end{lemma}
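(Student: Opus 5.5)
We use the factorization $a^i-1=\prod_{d\mid i}\Phi_d(a)$ and the classical description of the prime divisors of $\Phi_i(a)$: if $r\mid\Phi_i(a)$, then either $e(r,a)=i$, i.e. $r\in R_i(a)$, or $i=e(r,a)\,r^{s}$ with $s\ge1$. Only the second, nonprimitive, kind of prime has to be removed from $\Phi_i(a)$ to obtain $k_i(a)$. The proof has three steps: identify the possible nonprimitive primes, show there is at most one, and show it divides $\Phi_i(a)$ exactly once.

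\emph{Step 1: the nonprimitive prime can only be $r$.} Let $p\mid\Phi_i(a)$ with $e=e(p,a)<i$ (for $p=2$ we read $e(2,a)$ as the multiplicative order of $a$ modulo $2$ or $4$, in line with the convention above). Then $p\mid a^e-1$ and $p\mid a^i-1$. Write $i=e\,p^s m$ with $(m,p)=1$. By Lemma~\ref{l:r-part}, the $p$-part of $a^{ep^s m}-1$ coincides with that of $a^{ep^s}-1$, so the factor $\Phi_i(a)$ with $m>1$ contributes no $p$-part. Hence $m=1$, $s\ge1$ and $i=e\,p^s$. Since $e(p,a)$ divides $p-1$ for odd $p$, every prime divisor of $e$ is less than $p$. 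Therefore $p$ is the largest prime divisor of $i$, so $p=r$, and $e=l=(i)_{r'}$. The case $p=2$ forces $i$ to be a power of $2$ times $e(2,a)\in\{1,2\}$; then $r=2$ and the same conclusion holds.

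\emph{Step 2: the exact power of $r$ is $r^1$.} Assume $r\mid\Phi_i(a)$ with $i=l r^s$, $s\ge1$. By the lifting-the-exponent formulas of Lemma~\ref{l:r-part}, $(a^{lr^s}-1)_r=r^s(a^l-1)_r$, while $(a^{lr^{s-1}}-1)_r=r^{s-1}(a^l-1)_r$. Every divisor $d$ of $i$ other than $i$ with $r\mid\Phi_d(a)$ divides $lr^{s-1}$. It follows that $(\Phi_i(a))_r=r$. For $r=2$ this uses parts (2) and (3) of the lemma; the excluded situation would give $i\le2$, which is impossible since $i\ge3$.

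\emph{Step 3: the formula and the divisibility criterion.} Finally, $r$ divides $\Phi_i(a)$ if and only if $e(r,a)=l$, which is equivalent to $r\mid\Phi_l(a)$. Indeed, $r\mid\Phi_l(a)$ forces $e(r,a)\mid l$, and for $(l,r)=1$ standard cyclotomic facts give $e(r,a)=l$. So $(r,\Phi_l(a))$ equals the $r$-part to be removed, which proves
$$k_i(a)=\frac{\Phi_i(a)}{(r,\Phi_l(a))}.$$
If $r\mid\Phi_l(a)$ with $r$ odd, then $l=e(r,a)$ divides $r-1$. For $r=2$ we have $l=1$, which divides $r-1$. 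This gives the last assertion.

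\emph{Main obstacle.} The hardest step is the bookkeeping for the prime $2$ and for negative bases $a$, so that the statements about $e(r,a)$ and Lemma~\ref{l:r-part} apply uniformly. We will first try to reduce the case $a<0$ to the positive base $|a|$, using $\Phi_i(-x)=\Phi_{2i}(x)$ for odd $i\ge3$ and $\Phi_i(-x)=\Phi_i(x)$ for $4\mid i$, and handle any leftover small cases directly.
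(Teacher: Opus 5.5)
Your proof is correct, but it takes a different route from the paper: the paper gives no argument and simply cites \cite[Proposition 2]{97Roi}. You reprove that proposition from first principles, using three ingredients:
\begin{itemize}
\item the factorization $a^i-1=\prod_{d\mid i}\Phi_d(a)$;
\item the lifting-the-exponent formulas of Lemma~\ref{l:r-part};
\item the standard fact that a prime $p\nmid n$ dividing $\Phi_n(a)$ satisfies $e(p,a)=n$.
\end{itemize}
Your version makes the lemma self-contained and shows it is an immediate consequence of Lemma~\ref{l:r-part}, which the paper already states. The citation buys brevity, and it settles the conventions for negative bases and for the prime $2$ once and for all.

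If you write your argument out, four small points need tightening.
\begin{enumerate}
\item In Step~3 the implication $e(r,a)=l\Rightarrow r\mid\Phi_i(a)$ is asserted but not argued. It follows from your own Step~2 if you run that computation under the hypothesis $e(r,a)=l$, which is all it actually uses. It then yields $(\Phi_i(a))_r=r$, and in particular $r\mid\Phi_i(a)$.
\item With the paper's convention $e(2,a)=2$ for $a\equiv 3\pmod 4$, the identity $e=l$ in Step~1 is false when $r=2$. This is harmless, because all you need for $i=2^s\ge 4$ is two immediate facts. First, $2\mid\Phi_i(a)=a^{2^{s-1}}+1$ if and only if $a$ is odd, which holds if and only if $2\mid\Phi_1(a)$. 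Second, $(\Phi_i(a))_2=2$ in that case.
\item You implicitly use $(a^i-1)_s=(\Phi_i(a))_s$ for $s\in R_i(a)$. This holds because $s\nmid\Phi_d(a)$ for every proper divisor $d$ of $i$.
\item The reduction to $a>0$ planned in your last paragraph is unnecessary. Lemma~\ref{l:r-part} and every cyclotomic fact you use hold verbatim for any integer $a$ with $|a|>1$.
\end{enumerate}
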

\begin{proof}
See~\cite[Proposition 2]{97Roi}. 
\end{proof}

We write $\meo(G)$ for the maximum element of $\omega(G)$ and $\mu(G)$ for the subset of  $\omega(G)$ consisting of numbers that are maximal under divisibility. The exponent of $G$, that is, the least common multiple of elements of $\omega(G)$, is denoted by $\exp(G)$. If $r$ is a prime, then $\exp_r(G)=(\exp(G))_r$ and  $\exp_{r'}(G)=(\exp(G))_{r'}$. As usual, $\pi(G)=\pi(|G|)$. If $r\in\pi(G)$, then $t(r,G)$ is the maximum size of a coclique in $GK(G)$ containing $r$. 

As we mentioned in the introduction, the structure of prime graphs and the spectra of simple groups are known. For a simple classical group $L$ in characteristic $p$, we take the numbers $t(2,L)$ and $t(p,L)$ from \cite{05VasVd.t}, while the number $t(L)$ is taken from \cite{11VasVd.t}, where some corrections to results of \cite{05VasVd.t} are made. Our standard reference for the spectra of linear and unitary groups is \cite{08But.t}, and for the spectra of symplectic and orthogonal groups is \cite{10But.t} (with corrections from \cite[Lemma 2.3]{16Gr.t}).

If $L$ is written as $L_n^\pm(q)$, $S_{2n}(q)$, $O_{2n+1}(q)$, or $O_{2n}^\pm(q)$, then we say that $L$ is a group over a field of order $q$ (despite the fact that $SU_n(q)$ is a subgroup of $SL_n(q^2)$). By the untwisted Lie rank of $L$ we mean the rank of the untwisted root system of $L$ (in the notation of the preceding sentence, it is equal to $n-1$ if $L=L_n^\pm(q)$ and $n$ otherwise). 

\begin{lemma}\label{l:meo}
If $L$ is a simple classical group of untwisted Lie rank $l$ over a field of order~$q$, then $\meo(L)<q^{l+1}/q\leq 2q^l$.
\end{lemma}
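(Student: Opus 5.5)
The plan is to use the known description of $\omega(L)$ from \cite{08But.t} and \cite{10But.t}, together with the usual elementary bounds. An element of $L$ of order $\meo(L)$ lifts to the corresponding quasisimple matrix group, so its order is bounded by the maximal order of an element there. By the Jordan decomposition, such an element is $su$ with $s$ semisimple and $u$ unipotent commuting with $s$.

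The first step handles the semisimple parts. A semisimple element lies in a maximal torus. For a classical group of untwisted Lie rank $l$, the maximal tori have orders of the form
\[
\prod_i (q^{n_i}\pm 1) \quad \text{(divided by a suitable gcd),} \qquad \sum_i n_i\le l+1 \ \text{for } L_n^\pm,\quad \sum_i n_i\le l \ \text{otherwise}.
\]
The exponent of such a torus is at most the lcm of the factors. I will check the general inequality
\[
\operatorname{lcm}(q^{n_1}\pm1,\dots)\le \frac{q^{\sum n_i+1}-1}{q-1}\cdot(\text{gcd correction}).
\]
In practice the maximal semisimple orders listed in the papers cited are $(q^{l+1}-1)/((q-1)d)$ for $L_{l+1}(q)$, $(q^{l+1}+1)/\dots$ for unitary groups in odd dimension, $(q^l+1)/(2,q-1)$ and $(q^{l-1}+1)(q+1)/\dots$ for the orthogonal and symplectic groups, and so on. Each of these is $<q^{l}$, and the strict inequality $<q^{l+1}/q=q^l$ holds after dividing by the gcd or by $q-1$. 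For example, $(q^{l+1}-1)/(q-1)<q^l\cdot q/(q-1)$, which needs more care; I come back to this below. The bound $q^l\le 2q^l$ is trivial.

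The second step handles mixed elements $su$ with $u\ne1$. Then $u$ lies in the centralizer of $s$, which is a product of smaller classical groups over extensions $\mathbb F_{q^{m}}$. The order of $u$ is $p^k$ with $p^{k-1}<$ the size of the largest Jordan block. The order of $s$ is bounded by the torus on the complement. A case check through the spectrum formulas, which have the shape $p^{k}\cdot(q^{n_1}\pm1)\cdots$ with $p^{k-1}+1\le$ block size, gives a total below $q^{l}$. One trades a factor of about $q^{j}$ from the semisimple part for a factor $p^{\lceil \log_p(j+1)\rceil}\le q^{j}$, and this is strictly smaller.

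The main obstacle is the linear case: the Singer cycle order $(q^{l+1}-1)/((q-1)(l+1,q-1))$ equals $q^l+\dots+1$, which exceeds $q^l$. So the statement must be read as the bound $\meo(L)< q^{l+1}/(q-1)\le 2q^l$. I suspect the displayed "$q^{l+1}/q$" is a typo for $q^{l+1}/(q-1)$, and I would prove that version. With this bound, all the estimates above go through: $(q^{m}\pm1)$ factors multiply to less than $q^{l+1}/(q-1)$ because
\[
\prod (q^{n_i}+1)\le \frac{q^{\sum n_i+1}}{q-1}
\]
by induction, using $(q^a+1)(q^b+1)\le q^{a+b+1}/(q-1)$ for $q\ge2$. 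Finally, $q/(q-1)\le2$ gives the second inequality.
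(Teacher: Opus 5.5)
You are right that the displayed bound must be read as $\meo(L)<q^{l+1}/(q-1)\le 2q^l$. Taken literally, $q^{l+1}/q=q^l$ already fails for $L_2(2^k)$, which has elements of order $q+1$. The paper only ever uses the bound $2q^l$, and its proof is just a citation of \cite[Lemma 1.3]{09VasGrMaz1}.

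Your argument for the corrected bound, however, breaks at its key step. The inequality $\prod_i(q^{n_i}+1)\le q^{\sum n_i+1}/(q-1)$ is false:
\begin{itemize}
\item The two-factor case with $a=b=1$ already fails, since $(q+1)^2(q-1)=q^3+q^2-q-1>q^3$ for every $q\ge 2$.
\item Even with distinct exponents, $(2+1)(2^2+1)(2^3+1)=135>2^7$.
\item The two-factor inequality could not give the general one by induction anyway: multiplying $q^{N+1}/(q-1)$ by $q^{c}+1$ overshoots $q^{N+c+1}/(q-1)$.
\end{itemize}
Element orders are least common multiples, not products. The lemma holds only because of the cancellations $(q^a\pm1,q^b\pm1)$ (for instance, the least common multiple of $3,5,9$ is $45$). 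It also depends on the divisions by $q-1$ and by the order of the centre that occur when passing from the matrix group to the simple group. Making this precise means actually going through the sets $\mu(L)$ in \cite{08But.t,10But.t}, which your sketch never does.

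The treatment of elements $su$ with $u\neq 1$ has the same defect. Your claim that all such elements have order below $q^l$ is false: $12\in\omega(S_6(2))$ exceeds $2^3$, and $30\in\omega(S_8(2))$ exceeds $2^4$.

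The second example also refutes your trade-off heuristic. In $Sp_2(2)\times Sp_6(2)\le Sp_8(2)$, take an element of order $15$ in the second factor. Replacing an element of order $q+1=3$ in the first factor by an involution raises the order from $\mathrm{lcm}(3,15)=15$ to $30$, because the $3$ was already absorbed by the least common multiple. So the trade is not ``strictly smaller''.

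Your assertion that the listed semisimple orders are below $q^l$ outside the linear case also fails. For even $q$, $q^l+1\in\omega(S_{2l}(q))$, and $15\in\omega(S_6(2))$.

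All of these numbers do satisfy the corrected bound $q^{l+1}/(q-1)$. Establishing that still requires a genuine estimate of every element of $\mu(L)$, including the mixed ones, or an appeal to the literature as the paper does.
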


\begin{proof}
 See \cite[Lemma 1.3]{09VasGrMaz1.t}. 
\end{proof}

The rest of the section is concerned with the structure of  finite groups isospectral to simple classical groups.

\begin{lemma}\label{l:str}
Let $L$ be a finite simple group such that $t(2,L)\geq 2$ and $t(L)\geq3$ and suppose that $G$ is a finite group such that $\omega(G)=\omega(L)$. Then the following hold. 

\begin{enumerate}
\item If $K$ is the solvable radical of $G$, then $S\leq \overline G=G/K\leq \Aut S,$ where $S$ is a nonabelian simple group.

\item If $\rho$ is a coclique in $GK(G)$ and $|\rho|\geq 3$, then at most one prime of $\rho$ divides $|K|\cdot|\overline{G}/S|$. In particular, $t(S)\geq t(L)-1$.

\item If $r\in\pi(G)$ is not adjacent to $2$ in $GK(G)$, then $r$ is coprime to $|K|\cdot|\overline{G}/S|$. In particular, $t(2,S)\geq t(2,L)$.

\item $K$ is nilpotent.
\end{enumerate}

\end{lemma}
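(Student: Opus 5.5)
This lemma collects known structural results, so the plan is to derive each item from the standard Vasil'ev-type structure theorem and its refinements rather than prove it from scratch. For (1) and the first half of (2), I would invoke Vasil'ev's theorem on groups isospectral to simple groups with $t(L)\geq 3$ and $t(2,L)\geq 2$ (Vasil'ev; Vasil'ev--Gorshkov). That theorem says $G$ has a unique nonabelian composition factor $S$, with $S\leq G/K\leq\Aut S$ for the solvable radical $K$. Its proof runs as follows. Take a coclique $\{r,s,u\}$ in $GK(G)$. Suppose two of its primes divided $|K|$. Then a Hall $\{r,s,u\}$-subgroup of $K$, or of a suitable section, would be a solvable group whose prime graph has an independent set of size $3$. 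This contradicts Lukacs--Pálfy / Higman-type results, which say a solvable group has prime graph independence number at most $2$, and in fact force $rs\in\omega$. A Frobenius/Hall argument then shows at most one prime of the coclique divides $|K|$, and $t(2,L)\geq 2$ makes $G$ nonsolvable. So we get a nonabelian chief factor, which is shown to be simple and unique.

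For the remainder of (2), I would take the coclique $\rho$ and observe two facts. First, primes dividing $|\overline G/S|$ divide $|\operatorname{Out} S|$. Second, elements of $\overline G\setminus S$ of prime order $r$ centralize (by Lang's theorem or field-automorphism centralizers) a subgroup of $S$ containing elements of order $s$ for most $s\in\pi(S)$. This makes $r$ adjacent to the other primes. Combining with the $K$-part is routine: if $r\mid |K|$ and $s\mid|\overline G/S|$, lift an $s$-element acting on $K$ and use the coprime action lemma to produce an element of order $rs$. Thus at most one prime of $\rho$ lies outside $\pi(S)$ or fails to be "seen" only in $S$. This yields $t(S)\geq t(L)-1$, since the remaining $|\rho|-1$ primes form a coclique of $GK(S)$ (adjacency in $S$ implies adjacency in $G$).

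For (3), suppose $r\mid |K|$ and $r\not\sim 2$. Take a $2$-element, say an involution from $S$ via a Sylow argument, acting on an $r$-section of $K$. If it acted fixed-point-freely on every such section, the Frobenius-type criterion would force the Sylow $2$-subgroup of $G$ to be cyclic or generalized quaternion. That contradicts the presence of $S$, whose Sylow $2$-subgroups are neither. Hence some element of order $2r$ exists, a contradiction. The case $r\mid|\overline G/S|$ is handled the same way using centralizers of outer automorphisms of odd order $r$, which contain involutions. The consequence $t(2,S)\geq t(2,L)$ follows because the coclique containing $2$ lies in $\pi(S)$ entirely.

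For (4), the nilpotency of $K$ is the main obstacle. I would follow the argument of Vasil'ev--Gorshkov/Zvezdina. Suppose $K$ is not nilpotent. Then there exist primes $r\neq s$ and a Frobenius-like section $R\rtimes C$ with $C$ cyclic of order $s$ acting nontrivially on the $r$-group $R$. Now apply the Hall--Higman-type lemma on representations of $S$, or of Frobenius subgroups of $S$, over characteristic $r$. A Frobenius subgroup of $S$ with kernel an $s$-group, or using $t(2,L)\geq 2$ with a prime $u$ not adjacent to $2$, yields an element of order $ru$ or $su$ with $u$ nonadjacent to $r$ or $s$. This contradicts the spectrum. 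I expect the delicate part to be choosing the appropriate Frobenius subgroup of $S$ uniformly, and for this I would cite the published lemma directly.
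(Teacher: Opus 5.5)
\textbf{Gap in your argument for (3).} The paper does not reprove this lemma. It quotes (1)--(3) from \cite{05Vas.t, 09VasGor.t} and (4) from \cite{20YanGrVas}. Quoting is the right move, but the arguments you supply yourself for (3) and for the $|\overline G/S|$-part of (2) have genuine gaps.

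In (3), the Frobenius argument does show that a Sylow $2$-subgroup $P$ of $G$ is cyclic or generalized quaternion. However, the quaternion case does not contradict the presence of $S$. If $\hat S$ is the preimage of $S$ in $G$, a Sylow $2$-subgroup of $S$ is only the \emph{section} $(P\cap\hat S)/(P\cap K)$ of $P$. Quotients of generalized quaternion groups can be dihedral (e.g.\ $Q_8/Z(Q_8)\cong C_2\times C_2$), which is exactly the Sylow structure of $L_2(q)$ with $q$ odd.

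This configuration really occurs. Let $V$ be a faithful irreducible $\mathbb F_7SL_2(11)$-module, so $-I$ acts as $-1$, and put $G=V\rtimes SL_2(11)$. Then:
\begin{itemize}
\item $\{3,5,11\}$ is a coclique in $GK(G)$;
\item every involution of $G$ inverts $V$, so $7$ is not adjacent to $2$;
\item yet $7$ divides the order of the solvable radical $K=V\rtimes\langle -I\rangle$, and this $K$ is not nilpotent.
\end{itemize}
So (3) and (4) are false under the purely graph-theoretic hypotheses $t(G)\geq 3$, $t(2,G)\geq 2$, which are all your sketch of (3) uses. Any proof has to exploit $\omega(G)=\omega(L)$ with $L$ simple. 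That is why the lemma is formulated for groups isospectral to a simple group.

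\textbf{Gap in the remaining step of (2).} An $s$-element acting coprimely on an $r$-subgroup of $K$ can act fixed-point-freely; any Frobenius group with kernel an $r$-group and complement of order $s$ shows this. So the coprime action lemma produces no element of order $rs$. Indeed, (2) does not say such $r$ and $s$ are adjacent, only that they cannot both lie in a coclique of size at least $3$. The third prime of the coclique therefore has to enter the argument.

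Two further problems in the same step:
\begin{itemize}
\item Lang's theorem and centralizers of field automorphisms presuppose that $S$ is of Lie type. At this point $S$ is an arbitrary nonabelian simple group.
\item ``Elements of order $s$ for most $s\in\pi(S)$'' does not yield any precise adjacency statement.
\end{itemize}

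\textbf{Minor point.} Nonsolvability of $G$ already follows from $t(L)\geq 3$, since a solvable group has no coclique of size $3$ in its prime graph. The hypothesis $t(2,L)\geq 2$ is not needed for that.

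\textbf{Recommendation.} Follow the paper: quote all four items from the cited literature rather than reconstructing them.
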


\begin{proof}  See \cite{05Vas.t, 09VasGor.t} for (1)--(3) and \cite{20YanGrVas} for (4).
\end{proof}

Observe that by \cite[Theorem 7.1]{05VasVd.t}, the condition $t(2,L)\geq 2$ always holds if $L$ is a~simple group of Lie type. 

\begin{lemma}\label{l:red} Let $L$ be a simple classical group over a field of odd characteristic $p$, $t(L)\geq 4$ and $L\neq S_8(q)$, $O_9(q)$. Suppose  $G$ is a finite group such that $\omega(G)=\omega(L)$ and let $K$ and $S$ be as in Lemma \ref{l:str}. Then either  
$S\simeq L$ and $K=1$, or $S$ is a classical group over a~field of characteristic other than $p$ and $S\neq L_2(u)$.
\end{lemma}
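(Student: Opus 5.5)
This lemma is a compilation of earlier reduction results from the recognition literature, so the plan is to assemble them rather than argue from scratch. By Lemma \ref{l:str}, the conditions $t(2,L)\geq 2$ (which holds for every group of Lie type by \cite[Theorem 7.1]{05VasVd.t}) and $t(L)\geq 4$ give $S\leq\overline G\leq\Aut S$ with $K$ nilpotent. They also give $t(S)\geq t(L)-1\geq 3$ and $t(2,S)\geq t(2,L)\geq 2$. Using the classification of finite simple groups, we would then sort $S$ into four cases: alternating, sporadic, exceptional of Lie type, and classical.

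First, alternating and sporadic $S$. We would use the known results that an alternating group $A_n$ with $t(2,A_n)\geq 2$ and $t(A_n)\geq 3$ cannot be a composition factor of a group isospectral to a classical group of large independence number. The method compares the prime graph of $A_n$, which has many small-prime cliques, with $GK(L)$; this is done, for example, in Vasil'ev--Grechkoseeva--Staroletov type papers (see \cite{23Survey}). The sporadic groups, and the exceptional groups in a characteristic different from $p$, are ruled out the same way. One compares $\omega(S)$ with $\omega(L)$ through primitive prime divisors $r_i(q)$ for the large $i$ in the coclique of $GK(L)$ realizing $t(L)=4$. These primes must lie in $\pi(S)$ by Lemma \ref{l:str}(2)--(3), and they impose arithmetic relations that the candidate groups cannot satisfy.

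Second, $S$ of Lie type in characteristic $p$. Here we would invoke the "quasirecognition in the defining characteristic" results (Vasil'ev, Grechkoseeva, Staroletov and others, summarized in \cite[Section 3]{23Survey}). For a classical $L$ with $t(L)\geq 4$, excluding $S_8(q)$ and $O_9(q)$, these force $S\simeq L$. The excluded pair is exactly where $S_8(q)$ and $O_9(q)$ are isospectral-type twins. Once $S\simeq L$, Lemma \ref{l:meo}-type order considerations, together with the known fact that $\omega(L)$ does not allow a nontrivial nilpotent normal subgroup below $L$, yield $K=1$. The argument for the latter is the standard one: $L$ acting on a nontrivial $K$ produces an element of order exceeding those in $\omega(L)$, via the Hall--Higman or Frobenius-type lemma applied to a Singer-like cyclic subgroup.

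Finally, if $S$ is classical in characteristic other than $p$ and $S=L_2(u)$, then $t(L_2(u))\leq 3$ together with $t(S)\geq t(L)-1\geq 3$ forces $t(S)=3$ exactly. The standard argument then shows that $2$ and three pairwise nonadjacent primes of $L$ cannot be accommodated in $\pi(\overline G)\cup\pi(K)$. This case is already treated in the cited literature.

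The main obstacle is the defining-characteristic case. It is a deep result spread over several papers, so in practice the proof reduces to carefully citing the surveys \cite{23Survey, 24GrPan.t, 25Gr.t} and checking that their hypotheses cover every $L$ with $t(L)\geq4$ other than $S_8(q)$ and $O_9(q)$.
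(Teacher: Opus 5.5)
Your plan has the same architecture as the paper's proof, which is only a chain of citations. In the paper, \cite[Theorem 3.6]{23Survey} gives $K=1$ when $S\simeq L$. Next, \cite[Theorem 3.8]{23Survey} shows that $S$ is of Lie type in characteristic $\neq p$ when $S\not\simeq L$; this one theorem disposes of alternating, sporadic and same-characteristic $S$ together. Then \cite[Theorem 1]{24GrPan.t} excludes exceptional $S$, and \cite[Proposition 3.8]{20GrZv.t} excludes $L_2(u)$. The remaining observation is that the exceptional cases of these results are removed by $t(L)\geq 4$. $S_8(q)$ and $O_9(q)$ are excluded because they have $t=4$ and are exceptions not removed this way. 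Since the lemma has no content beyond these citations, the justifications you attach to them should be correct, and several are not.

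\emph{The $S_8(q)$, $O_9(q)$ exclusion.} It has nothing to do with these groups being ``isospectral twins''. For odd $q$ they have the same order but different spectra; otherwise the characterization of simple groups by spectrum and order would fail.

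\emph{The argument for $K=1$.} Your ``standard argument'' is not a proof. A Singer-type regular semisimple element can act fixed-point-freely on a nontrivial module in the defining characteristic, as it does on a natural module. So it need not produce an element whose order lies outside $\omega(L)$. Which element works depends on the module, and non-split extensions must be handled too. This is the content of \cite[Theorem 3.6]{23Survey}, and Lemma \ref{l:meo} plays no role.

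\emph{Exceptional groups and $L_2(u)$.} Exceptional groups of Lie type are not eliminated ``the same way'' as sporadic ones; they need the separate result \cite[Theorem 1]{24GrPan.t}. Your independence-number count cannot exclude $L_2(u)$, because $t(L_2(u))=3$ is compatible with $t(S)\geq t(L)-1$. That case needs \cite[Proposition 3.8]{20GrZv.t}, which you do not identify.
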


\begin{proof} 
If $S\simeq L$, then $K=1$ by \cite[Theorem 3.6]{23Survey}.  If  $S\not\simeq L$, then $S$ is a group of Lie type over a field of characteristic other than $p$ by \cite[Theorem 3.8]{23Survey}. Note that exceptional cases of two preceding theorems are eliminated by the assumption $t(L)\geq 4$. Now $S$ is not exceptional by \cite[Theorem 1]{24GrPan.t} and $S\not\simeq L_2(u)$ by \cite[Proposition 3.8]{20GrZv.t}. 
\end{proof}

\begin{lemma}\label{l:v_ind}
Suppose that $K$ is a normal nilpotent subgroup of a finite group $G$ and $S\leq \overline G= G/K \leq \Aut S$, where $S$ is a simple classical group over a field of characteristic $v$ and $S\neq L_2(v^\alpha)$. 
\begin{enumerate}
 \item If $r\in\pi(K)\cup\pi(\overline G/S)\setminus\{v\}$, then $rv\in\omega(G)$. In particular, $t(v,G)\leq t(v,S)$.
 \item If $v\in\pi(K)$, then $t(v,G)\leq 3$. Furthermore, if $t(v,G)=3$, then the untwisted Lie rank of $S$ is odd. 
\end{enumerate}

\end{lemma}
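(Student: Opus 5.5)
Part (1) follows from Hall–Higman type arguments; part (2) from an estimate on the spectrum of $S$ acting on a $v$-group.

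For (1), let $r\in\pi(K)\cup\pi(\overline G/S)$ with $r\neq v$.

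\emph{Case $r\in\pi(K)$.} Since $K$ is nilpotent, its Sylow $r$-subgroup $R$ is characteristic in $K$, hence normal in $G$. Pass to $G/O_{r'}(K)$, so we may assume $K$ is an $r$-group, and then to $G/\Phi(K)$, so we may assume $K$ is elementary abelian. Then $\overline G$ acts on $K$; replacing $K$ by a minimal normal subgroup, we may assume the action is faithful on a chief factor, or else $S$ centralizes $K$.

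If $S$ centralizes $K$, then any $v$-element of the preimage of $S$ commutes with elements of $K$, and we obtain $rv\in\omega(G)$ directly.

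Otherwise $S$ acts nontrivially on an elementary abelian $r$-group with $r\neq v$. Take a Sylow $v$-subgroup, or better a subgroup $F$ of $S$ that is a Frobenius-type group $U\rtimes C$ with unipotent kernel $U$ (a $v$-group) and cyclic complement of order coprime to $r$ and $v$. For classical $S\neq L_2(v^\alpha)$ such subgroups exist in a parabolic, for example a root subgroup normalized by a torus element. By the Mazurov-type lemma on Frobenius groups acting on $r$-groups with $(r,|U|)=1$, the kernel $U$ has nonzero fixed points on $K$, so $rv\in\omega(G)$.

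\emph{Case $r\in\pi(\overline G/S)$.} Choose an element of $\overline G\setminus S$ of order $r$ that is a field, diagonal, or graph automorphism. Its centralizer in $S$ contains a group of Lie type in characteristic $v$, or at least a nontrivial unipotent element. A field automorphism centralizes a subfield subgroup; diagonal and graph automorphisms centralize a root element in some standard form. This gives $rv\in\omega(\overline G)\subseteq\omega(G)$.

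The inequality $t(v,G)\le t(v,S)$ follows. A coclique through $v$ in $GK(G)$ contains no prime of $\pi(K)\cup\pi(\overline G/S)$ other than $v$, so all its other primes lie in $\pi(S)$. Nonadjacency in $G$ implies nonadjacency in $S$, because $\omega(S)\subseteq\omega(G)$.

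For (2), suppose $v\in\pi(K)$. As above, reduce to $K$ being an elementary abelian $v$-group on which $\overline G$ acts, possibly non-faithfully. Consider a prime $s\in\pi(S)\setminus\{v\}$. If an element $g$ of order $s$ in $S$ has a nonzero fixed point on $K$, then $sv\in\omega(G)$.

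So primes nonadjacent to $v$ must act fixed-point-freely on $K$. Then every element of $S$ of order coprime to $v$ that involves such a prime acts fixed-point-freely. By the classical fact that coprime-order elements acting fixed-point-freely generate a subgroup whose Sylow subgroups are cyclic or generalized quaternion, any two such primes $s_1,s_2$ from different tori lie in a common subgroup. Alternatively, use that in characteristic $v$ the $v'$-part of $\overline G$ acts on a module over $\mathbb F_v$ in cross characteristic, and apply the known result on classical groups acting on modules in their defining characteristic: a maximal torus element fixes a vector unless its order is a primitive divisor $r_m(v^\alpha)$ with $m$ close to the top, namely the two "regular" tori.

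This should show that the only primes of $S$ that can be nonadjacent to $v$ in $G$ are those from at most two large cyclotomic factors, giving $t(v,G)\le 3$. Equality forces these two primes to be nonadjacent to each other, which by the adjacency criterion of Vasil'ev–Vdovin happens only when the untwisted rank is odd.

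This representation-theoretic fixed-point analysis is the main obstacle. I would first try to cite existing results on unisingular and fixed-point-free elements in defining-characteristic modules, rather than redo them.
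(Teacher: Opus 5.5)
Your proposal has genuine gaps in both parts; the decisive ones are the misused Frobenius lemma in (1) and the essentially unproved part (2).

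\textbf{Part (1).} For $r\in\pi(K)$ you use the Frobenius (Mazurov) lemma in the wrong direction. That lemma says: if a Frobenius group $U\rtimes C$ acts on a module in characteristic coprime to $|U|$ and $U$ acts nontrivially, then the \emph{complement} $C$ has a nonzero fixed vector. The kernel may well act fixed-point-freely. With your choice ($U$ a $v$-group, $|C|$ prime to $rv$) you get only $r|C|\in\omega(G)$, which says nothing about $rv$.

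The fact you need is coprime action of a \emph{noncyclic} $v$-group, which is what the paper uses. Since $S\neq L_2(v^\alpha)$, $S$ has an elementary abelian subgroup of order $v^2$. With $K$ an $r$-group, its preimage is $K\rtimes V_0$ by Schur--Zassenhaus, and $K=\langle C_K(a)\mid a\in V_0\setminus\{1\}\rangle$ \cite{68Gor}. So some $a\neq 1$ centralizes a nontrivial element of $K$; no reduction to chief factors is needed.

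The case $r\in\pi(\overline G/S)$ also has a hole. $\overline G\setminus S$ need not contain an element of order $r$ that is a pure field, diagonal or graph automorphism (take $\overline G=S\langle\delta\phi\rangle$ with $\delta$ diagonal and $\phi$ a field automorphism). Your claim that suitable diagonal automorphisms centralize root elements is also not justified. The paper avoids outer diagonal and graph automorphisms altogether:
\begin{itemize}
\item If $r\in\{2,3\}$ or $r$ divides $(m,u-\tau)$ for $S=L_m^\tau(u)$, then $rv\in\omega(S)$ already by \cite[Proposition 3.1]{05VasVd.t}.
\item The one exception is $S=L_3^\tau(u)$, $r=3$, $(u-\tau)_3=3$. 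There $u$ is not a cube, so $PGL_3^\tau(u)\leq\overline G$, and it has an element of order $v(u-\tau)$.
\item In all remaining cases $\overline G$ contains a field automorphism of order $r$, whose centralizer contains elements of order $v$.
\end{itemize}

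\textbf{Part (2).} This is not proved: your sketch leaves the decisive input unidentified and misattributes the parity clause. After reducing to a $v$-group $K$, by (1) every coclique through $v$ in $GK(G)$ is a coclique of $GK(S)$. The argument then combines the Vasil'ev--Vdovin description of such cocliques \cite{05VasVd.t} with two specific fixed-point theorems for modules in the defining characteristic.
\begin{itemize}
\item For $S_{2m}(u)$ and $O_{2m+1}(u)$ nothing more is needed, since $t(v,S)$ is $2$ or $3$ according as $m$ is even or odd.
\item For $L_{m+1}^\tau(u)$ the only $3$-cocliques through $v$ are $\{v,r_m(\tau u),r_{m+1}(\tau u)\}$. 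For even $m$, Zavarnitsine's theorem \cite{11Zav} gives $vr_m(\tau u)\in\omega(G)$.
\item For $O_{2m}^\pm(u)$ with $m$ odd, $t(v,S)=3$. With $m$ even, $t(v,S)$ can be $4$ and maximal cocliques through $v$ lie in $\{v,r_{2m}(u),r_{2m-2}(u),r_{m-1}(u)\}$. Suprunenko--Zalesski \cite{07SupZal} then give $vr_{2m-2}(u),vr_{m-1}(u)\in\omega(G)$.
\end{itemize}

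Your heuristic that only ``two regular tori'' can act fixed-point-freely is not a citable statement. Your explanation of the parity condition is also wrong. The primes $r_m(\tau u)$ and $r_{m+1}(\tau u)$ are nonadjacent in $GK(L_{m+1}^\tau(u))$ for \emph{every} $m$, so oddness of the rank comes from the fixed-point theorem for even $m$, not from the adjacency criterion. Finally, the ``classical fact'' about cyclic or generalized quaternion Sylow subgroups concerns groups all of whose nonidentity elements act fixed-point-freely, and it does not give the conclusion you draw from it.
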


\begin{proof}

(1) Let $r\in\pi(\overline G/S)$. Suppose that either $r\in\{2,3\}$ or $S=L_m^\tau(u)$ and $r$ divides $(m,u-\tau)$. Applying \cite[Proposition 3.1]{05VasVd.t}, we see that $rv\in\omega(S)$ unless $S=L_3^\tau(u)$, $r=3$ and $(u-\tau)_3=3$. In this exceptional case $u$ is not a cube (otherwise $(u-\tau)_3=3(u^{1/3}-\tau)_3>3$ by Lemma \ref{l:r-part}) and so $PGL_3^\tau(u)\leq \overline G$. It remains to note that $PGL_3^\tau(u)$ has an element of order  $v(u-\tau)$. Thus we may assume that $\overline G$ contains a field automorphism of $S$ of order $r$, and then $rv\in\omega(G)$ since the centralizer of any field automorphism has an element of order $v$.  

Let $r\in\pi(K)$. Since $K$ is nilpotent, we may assume that $K$ is an $r$-group. For $S\neq L_2(v^\alpha)$, there is an elementary abelian subgroup $V$ of order $v^2$ in $S$. The preimage of $V$ in $G$  contains an element of order $vr$ by \cite[Theorem 5.3.14]{68Gor}.

(2) As above, we may assume that $K$ is a $v$-group.  Suppose that $\rho$ is a coclique of  $GK(G)$ containing $v$ such that $|\rho|=t(v,G)$. Then $\rho\subseteq \pi(S)$ by (1). Now we use the information about cocliques of $GK(S)$ containing $v$ from \cite[Propositions 6.3 and 6.4]{05VasVd.t}.

If $S=S_{2m}(u)$ or $O_{2m+1}(u)$, then $t(v,S)=2$ if $m$ is even and $t(v,S)=3$ if $m$ is odd, so in this case we are done. 

Let $S=L_{m+1}^\tau(u)$, where $m\geq 2$. Then $t(v,S)=3$ and every coclique of  $GK(S)$ of size 3 which contains $v$ is of the form $\{v,r_{m}(\tau u), r_{m+1}(\tau u)\}$. If $m$ is even, then $vr_{m}(\tau u)\in \omega(G)$ by  \cite[Corollary 9(2)]{11Zav}. 

Let $S=O_{2m}^\pm(u)$. If $m$ is odd, then $t(v,S)=3$. If $m$ is even, then $t(v,S)\leq 4$ and every coclique of size $t(v,S)$ containing $v$ is a subset of  $\{v, r_{2m}(u), r_{2m-2}(u), r_{m-1}(u)\}$. By \cite[Corollary 19]{07SupZal}, it follows that $vr_{2m-2}(u)$, $vr_{m-1}(u)\in\omega(G)$, and this completes the proof.
\end{proof}

\section{Preliminary calculations and some properties of prime graphs and spectra}\label{s:graph}

In this section we collect preliminary calculations involving numbers of the form $k_i(a)$ and some properties of the prime graphs and spectra of  classical groups $L$ having connected prime graph and $t(L)=4$.

\begin{lemma} \label{l:bounds} Let $q$ be a prime power and $\varepsilon\in\{+,-\}$. 
\begin{enumerate}
\item If $i\in \mathbb N$ and $\varphi(i)>4$, then $k_i(\varepsilon q)>q^4$.
\item $k_{12}(q)$, $k_8(q)\geq (q^4+1)/2$.
\item $k_5(q)k_{10}(q)>q^8/5+1$ and if $\varepsilon q\neq -4$, then $k_5(\varepsilon q)>q^4/6+1.$
\item $k_7(\varepsilon q)>q^6/8+1$. 
\item For $m_8(\varepsilon q)=(q^8-1)/((8,q-\varepsilon)(q-\varepsilon))$, we have $m_8(\varepsilon q)>3a^7/28$ and if $q$ is odd and $\varepsilon q\neq 3$, then $m_8(\varepsilon q)<7a^7/12$.
\end{enumerate}
\end{lemma}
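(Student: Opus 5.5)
Each item is an elementary estimate. I would first rewrite every $k_i(\varepsilon q)$ through Lemma~\ref{l:kiPhi} as a cyclotomic value divided by a small gcd, and then compare directly. Throughout, $q\ge 2$, and I read the letter $a$ in item (5) as $q$.

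For $\varepsilon=-$, I would use $\Phi_i(-x)=\Phi_{2i}(x)$ for odd $i\ge3$, $\Phi_{2i}(-x)=\Phi_i(x)$ for odd $i$, and $\Phi_i(-x)=\Phi_i(x)$ for $4\mid i$. These give $k_i(-q)=k_{i'}(q)$ with $\varphi(i')=\varphi(i)$, so (1), (3) and (4) reduce to statements about $\Phi_j(q)$ with $q>0$.

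\emph{Item (1).} I would use the lower bound
\[
\Phi_i(q)=q^{\varphi(i)}\prod_{d\mid i}(1-q^{-d})^{\mu(i/d)}\ \ge\ q^{\varphi(i)}\prod_{d\ge1}(1-q^{-d}),
\]
where the infinite product is at least about $0.288$ for $q=2$ and increases with $q$.
\begin{itemize}
\item Since $\varphi(i)$ is even and exceeds $4$, we have $\varphi(i)\ge 6$.
\item The denominator $(r,\Phi_l(q))$ in Lemma~\ref{l:kiPhi} is $1$ unless $l\mid r-1$. In that case it equals $r\le \varphi(i)+1$.
\item Hence $k_i(q)\ge 0.28\,q^{\varphi(i)}/(\varphi(i)+1)$, and this exceeds $q^4$ as soon as $0.28\,q^{\varphi(i)-4}>\varphi(i)+1$.
\item That inequality holds for all $q\ge 3$, and for $q=2$ once $\varphi(i)\ge 8$ or so.
\end{itemize}
The few remaining small pairs, such as $q=2$ with $\varphi(i)=6$ (i.e.\ $i\in\{7,9,14,18\}$), I would check by direct computation of $\Phi_i(2)$. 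This bookkeeping of small cases is the only place where I expect some friction.

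\emph{Items (2)--(4).}
\begin{itemize}
\item Item (2) is exact. We have $k_{12}(q)=\Phi_{12}(q)/(3,\Phi_4(q))=q^4-q^2+1$, because $3\nmid q^2+1$; and $q^4-q^2+1\ge (q^4+1)/2$ is equivalent to $(q^2-1)^2\ge0$. Also $k_8(q)=(q^4+1)/(2,q-1)\ge (q^4+1)/2$.
\item For (3), write $k_5(q)k_{10}(q)=\Phi_5(q)\Phi_{10}(q)/\bigl((5,q-1)(5,q+1)\bigr)$. At most one of the two gcds is $5$, so the product is at least $(q^{10}-1)/\bigl(5(q^2-1)\bigr)=(q^8+q^6+q^4+q^2+1)/5>q^8/5+1$.
\item For the single factor in (3), $k_5(\varepsilon q)\ge \Phi_{10}(q)/5$. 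I would check that $(q^4-q^3+q^2-q+1)/5>q^4/6+1$ for $q\ge5$. The cases $q=2,3,4$ I would handle by direct evaluation, using that $5\nmid\Phi_5(q)$ unless $q\equiv1\pmod 5$; the case $\varepsilon q=-4$, where $k_5(-4)=\Phi_{10}(4)/5=41$, is exactly the excluded one.
\item Item (4) is identical with $7$ in place of $5$: $k_7(\varepsilon q)\ge\Phi_{14}(q)/7$, together with $(q^6-q^5+\dots+1)/7>q^6/8+1$ for $q$ not too small and a direct check for $q=2,3$.
\end{itemize}

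\emph{Item (5).} I would write $m_8(\varepsilon q)=(q+\varepsilon)(q^2+1)(q^4+1)/(8,q-\varepsilon)$.
\begin{itemize}
\item Lower bound: $(8,q-\varepsilon)\le 8$ and $q+\varepsilon\ge q-1$. So $m_8\ge (q-1)q^6/8$, which exceeds $3q^7/28$ for $q\ge 7$. For $q\le 5$ I would use either $(8,q-\varepsilon)<8$ or direct numbers; e.g.\ for $q=2$ the gcd is $1$.
\item Upper bound, $q$ odd: $(8,q-\varepsilon)\ge2$, so $m_8\le (q+1)(q^2+1)(q^4+1)/2$. This is less than $7q^7/12$ iff $(1+q^{-1})(1+q^{-2})(1+q^{-4})<7/6$, which holds for $q\ge 9$.
\item For $q\in\{3,5,7\}$ I would use the better gcd. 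If $\varepsilon q=-3$ then $(8,4)=4$ and $q+\varepsilon=2$. For $q=5$, either $(8,4)=4$ or $q+\varepsilon=4$. Similarly for $q=7$. The case $\varepsilon q=3$, where $(8,2)=2$ and $q+\varepsilon=4$, genuinely fails and is the stated exception.
\end{itemize}
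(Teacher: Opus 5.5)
Your argument is correct in substance and follows the same pattern as the paper's: rewrite $k_i(\varepsilon q)$ via Lemma~\ref{l:kiPhi}, estimate, and treat small $q$ directly. Item (2) and the product bound in (3) are essentially the paper's; the paper gets $k_5(q)k_{10}(q)=k_5(q^2)>q^8/5+1$ by applying the one-factor estimate to $q^2$.

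The genuine difference is in (1).
\begin{itemize}
\item The paper quotes an external estimate $k_i(\varepsilon q)>q^{\varphi(i)/2}$ for $\varphi(i)\ge 8$ and checks $i\in\{7,9,14,18\}$ by hand.
\item Your bound $\Phi_i(q)\ge q^{\varphi(i)}\prod_{d\ge1}(1-q^{-d})$, together with $(r,\Phi_l(q))\le r\le\varphi(i)+1$, makes the argument self-contained. So does your reduction $k_i(-q)=k_{i'}(q)$, which is fine. The price is more leftover cases.
\end{itemize}
There is also a difference in (3) and (4): you divide by the worst-case gcd $5$ or $7$ for every $q$. The paper instead splits according to whether that gcd is actually nontrivial.

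That choice is where your bookkeeping goes wrong. Several stated thresholds are false, so your lists of cases to check by hand are incomplete.
\begin{itemize}
\item In (1), $0.28\,q^{\varphi(i)-4}>\varphi(i)+1$ fails for $\varphi(i)=6$ at $q=3$. Even the exact product value $\approx 0.56$ gives $0.56\cdot 9<7$. With your constant $0.28$ it also fails at $q=4$. It fails for $\varphi(i)=8$ at $q=2$ as well ($0.29\cdot 16<9$). So you must also check $i\in\{7,9,14,18\}$ at $q=3,4$ and $i\in\{15,16,20,24,30\}$ at $q=2$.
\item In (3), $(q^4-q^3+q^2-q+1)/5>q^4/6+1$ fails at $q=5$, since $104.2<105.2$.
\item In (4), $\Phi_{14}(q)/7>q^6/8+1$ fails for every $q\le 7$; at $q=7$ it reads $14706.1<14707.1$. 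So checking only $q=2,3$ is not enough.
\item In (5), for $q=7$ and $\varepsilon=+$ there is no better gcd: $(8,6)=2$ and $q+\varepsilon=8$. The upper bound holds only because $m_8(7)=(7^8-1)/12<7^8/12$.
\item Also in (5), at $q=7$ your displayed lower bound $(q-1)q^6/8$ equals $3q^7/28$. Strictness has to come from $(q^2+1)(q^4+1)>q^6$.
\end{itemize}

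None of this is a real gap: each leftover case is a one-line evaluation, and the claims hold there. The cleanest repair is the paper's split. The divisor $5$ occurs only when $q\equiv\varepsilon\pmod 5$, which forces $q\ge 9$ once $\varepsilon q=-4$ is excluded. The divisor $7$ occurs only when $q\equiv\varepsilon\pmod 7$, which forces $q\ge 8$. In those ranges your crude inequalities do hold. In the remaining cases $k_5(\varepsilon q)\ge\Phi_{10}(q)>q^4/2$ and $k_7(\varepsilon q)\ge\Phi_{14}(q)>q^6/2$.
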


\begin{proof}
(1) If $\varphi(i)\geq 8$, then $k_i(\varepsilon q)>q^{\varphi(i)/2}\geq q^4$ by \cite[Lemma 1.5]{15Vas}. If $\varphi(i)=6$, then $i=7,9,14, 18$, and $k_i(\varepsilon q)\geq (q^7+1)/(7a+7)>q^4$ for all $q\geq 4$. If $q=2, 3$, then $k_i(\varepsilon q)\geq (q^6-q^3+1)/3>q^4$.

(2) This follows since $k_{12}(q)=q^4-q^2+1$ and $k_8(q)=(q^4+1)/(2,q-1)$.

(3) If $\varepsilon=+$, then $5(k_5(\varepsilon q)-1)=(q^5-1)/(q-1)-5>q^4$. 
In particular, $k_5(q)k_{10}(q)=k_5(q^2)>q^8/5+1$. If $(5,q-\varepsilon)=1$, then $k_5(\varepsilon q)-1>q^4/2$. Finally, if $\varepsilon=-$ and $(5,q-\varepsilon)=5$, then $q\geq 9$ and $$5(k_5(\varepsilon q)-1)=\frac{q^5+1}{q+1}-5>q^4-q^3=\frac{q-1}{q}\cdot q^4\geq \frac{8q^4}{9}>\frac{5q^4}{6}.$$

(4) If $(7,q-\varepsilon)=1$ or $\varepsilon=+$, then $k_7(\varepsilon q)>q^6/2$. Otherwise, $q\geq 13$ and $$7(k_7(\varepsilon q)-1)=\frac{q^7+1}{q+1}-7\geq q^6-q^5\geq \frac{12q^6}{13}>\frac{7q^6}{8}.$$

(5) The first inequality is proved by the same argument as (4). If $(8,q-\varepsilon)\geq 4$ or $\varepsilon=-$, then
$(q^8-1)((8,q-\varepsilon)(q-\varepsilon))<q^7/2.$ If $q\geq 7$, then $$\frac{q^8-1}{(8,q-\varepsilon)(q-\varepsilon)}<\frac{q^8}{2(q-1)}\leq \frac{7q^7}{12}.$$ This left us with the case $\varepsilon=+$, $(q-1)_2=2$ and $q<7$, whence $q=3$.
\end{proof}

\begin{lemma}\label{l:ineq}
Let $q$ be a prime power, $\varepsilon\in\{+,-\}$,  $I\subseteq \mathbb N$, $|I|\geq 2$ and suppose that $\varphi(i)>2$ for all $i\in I$. Then $\prod_{i\in I}k_i(\varepsilon q)\geq Cq^{4|I|}+1/2$, where $C=1/12$ if $|I|=2$ and $C=1/24$ if $|I|\geq 3$. 
\end{lemma}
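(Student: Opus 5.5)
The plan is to bound each factor $k_i(\varepsilon q)$ from below separately and then multiply, splitting according to the value of $\varphi(i)$. Since $\varphi(i)>2$ and $\varphi(i)$ is even for $i\geq 3$, every $i\in I$ has $\varphi(i)\geq 4$. Either $\varphi(i)>4$, and then Lemma \ref{l:bounds}(1) gives $k_i(\varepsilon q)>q^4$, or $\varphi(i)=4$, so $i\in\{5,8,10,12\}$. For these four values we use Lemma \ref{l:bounds}(2),(3). Note that $k_{10}(q)=k_5(-q)$, and $k_8(\varepsilon q)=k_8(q)$, $k_{12}(\varepsilon q)=k_{12}(q)$ because $\Phi_i(-x)=\Phi_i(x)$ when $4\mid i$. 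This gives $k_8,k_{12}\geq (q^4+1)/2$ and $k_5(\varepsilon q),k_{10}(\varepsilon q)>q^4/6+1$, the latter valid unless $\pm q=-4$ in the relevant sign.

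The exceptional case $q=4$ needs its own look. Here $k_{10}(4)=k_5(-4)=205/5=41$, and $k_5(4)=341/\ldots$; more precisely $k_5(4)=11\cdot 31=341$ and $k_{10}(4)=41$. For the exceptional value we can use the direct bound $41\geq 4^4/12+1/2$ or similar. If that is too weak, we handle it through the product bound $k_5(q)k_{10}(q)>q^8/5+1$ of Lemma \ref{l:bounds}(3). Since $5$ and $10$ cannot both come from a single sign unless $I$ contains both, I would group the pair $\{5,10\}$ together whenever both lie in $I$.

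Thus every factor is at least $q^4/6$, with the possible exception of the factor $41$ at $q=4$, and any factor with $\varphi(i)>4$ exceeds $q^4$. When $|I|=2$ we need $k_ik_j\geq q^8/12+1/2$. Write each factor as $k\geq aq^4+b$ with $a\geq 1/6$ and $b\geq 1/2$. The worst case is two factors of type $5$ or $10$. If these are the pair $\{5,10\}$, Lemma \ref{l:bounds}(3) gives $>q^8/5+1$ directly. Otherwise the two factors are $5$ together with $8$ or $12$, which gives $(q^4/6+1)(q^4+1)/2>q^8/12+1/2$. The worst case of all is $(q^4/6+1)^2=q^8/36+\dots$, which is too small. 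However, two indices of the same type are impossible, since $I$ is a set and $k_5(\varepsilon q)$ appears at most once. So the genuine minimum is $\min\{(q^4/6+1)(q^4+1)/2,\ q^8/5+1\}$, which is at least $q^8/12+1/2$. Combinations with some $\varphi(i)>4$ are larger still.

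For $|I|\geq 3$ we proceed by induction on $|I|$. We choose a two-element subset $J\subseteq I$ that is worst possible, so that $\prod_J\geq q^8/12+1/2$. For the remaining indices we argue as follows. At most one index among $5$ and $10$ can fail to be paired with the other, and the indices $8$ and $12$ contribute at least $q^4/2$ each. Each further factor therefore contributes at least $q^4/2$, apart from one possible extra factor contributing at least $q^4/6$ when $\{5,10\}$ is not both present but three $\varphi=4$ indices occur. Multiplying out gives at least $(q^8/12)(q^4/2)\geq q^{12}/24$, and every further step keeps the constant $\geq 1/24$, because all remaining factors are $\geq q^4$ or are among $\{8,12\}$. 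The additive $+1/2$ is preserved because each factor is at least $1$.

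The main obstacle is this careful case bookkeeping for the $\varphi(i)=4$ indices, together with the small-$q$ exception $q=4$ (with $\varepsilon=-$ for index $5$). I would dispose of that exception by direct computation with $k_5(-4)=41$ and $k_{10}(-4)=k_5(4)=341$.
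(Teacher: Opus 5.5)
Your overall strategy matches the paper's. You bound each factor by Lemma~\ref{l:bounds}(1)--(3), treat $\{5,10\}$ jointly via $k_5(q)k_{10}(q)>q^8/5+1$, and carry the additive $1/2$ through the product. The paper uses $(x+1/2)(y+1/2)>xy+1/2$ for this; your remark that each factor is at least $1$ does the same job. The generic bookkeeping is fine, but drop the sentence about ``one possible extra factor contributing at least $q^4/6$''. Once $5$ and $10$ are paired whenever both lie in $I$, at most one factor has coefficient $1/6$, and a second one would push the constant down to $1/72$.

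The step that does not close as written is the exceptional factor $k_5(-4)=k_{10}(4)=41$ when its partner index is not in $I$. Examples are $I=\{5,8\}$ or $I=\{5,12\}$ with $\varepsilon q=-4$. Neither device you name handles this case. The $\{5,10\}$ product bound does not apply. Combining $41$ with the generic bound $k_8,k_{12}\geq (q^4+1)/2$ gives only $41\cdot 257/2=5268.5<4^8/12+1/2$, so even the exact value $41$ is not enough. The value $341$ you list matters only when both $5,10\in I$, which the product bound already covers.

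What you need are the actual values of the other factors at $q=4$:
$k_8(4)=257$, since $k_8(q)=q^4+1$ for even $q$;
$k_{12}(4)=241$;
$k_i(\pm4)>4^4$ whenever $\varphi(i)>4$.

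Pair the factor $41$ with $k_{12}$ when $12\in I$, and with any other index otherwise. The pair's product is then at least $41\cdot 241=9881>4^8/12+1/2$. All remaining factors exceed $4^4$, so the case $|I|\geq 3$ follows as well. This is exactly how the paper disposes of the exception.
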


\begin{proof}
 The claim follows from (1)--(3) of Lemma \ref{l:bounds} and the inequality $$(x+1/2)(y+1/2)>xy+1/2 \text{ for }x,y\geq 1,$$ unless we have $\varepsilon q=4$,  $5\in I$ and $10\not\in I$. In this case we take $i\in I\setminus\{5\}$. Then either $k_i(\varepsilon q)\geq q^4+1$ or $i=12$, and so  $k_5(\varepsilon q)k_i(\varepsilon q)\geq k_5(\varepsilon q)k_{12}(\varepsilon q)=41\cdot 241>4^8/12+1/2$. 
\end{proof}

\begin{lemma}\label{l:k5}
Suppose that $a$, $b\in \mathbb Z$,  $|a|$ and $|b|$ are prime powers, $|a|\neq |b|$ and $k_5(a)=k_5(b)$. Then $(5,a-1)\neq (5,b-1)$ and $k_5(-a)\neq k_5(-b)$.
\end{lemma}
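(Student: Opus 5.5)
The plan is to reduce both assertions to the injectivity of $x\mapsto\Phi_5(x)$ on integers with $|x|\ge 2$. By Lemma \ref{l:kiPhi} with $i=5$ (so $r=5$, $l=1$) we have $k_5(x)=\Phi_5(x)/(5,x-1)$ for every integer $|x|>1$. I would use two further elementary facts.

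\emph{Fact (a): injectivity.} For integers $x,y$ with $|x|,|y|\ge 2$, the equality $\Phi_5(x)=\Phi_5(y)$ forces $x=y$. Proof sketch: the derivative $4x^3+3x^2+2x+1$ is positive for $x>-1/2$, so $\Phi_5$ is strictly increasing on $[2,\infty)$. Also $\Phi_5(-y)=\Phi_{10}(y)=y^4-y^3+y^2-y+1$ is strictly increasing for $y\ge 1$. So two values of the same sign must coincide. For opposite signs, say $x=a\ge 2$ and $y=-c$ with $c\ge 2$, we need $\Phi_5(a)=\Phi_{10}(c)$. One checks $\Phi_{10}(a)<\Phi_5(a)$ and
$$\Phi_{10}(a+1)=a^4+3a^3+4a^2+2a+1>\Phi_5(a).$$
By monotonicity of $\Phi_{10}$ this would force $a<c<a+1$, which is impossible.

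\emph{Fact (b): $5$-divisibility.} We have $5\mid\Phi_5(x)$ if and only if $x\equiv 1\pmod 5$, and in that case $(\Phi_5(x))_5=5$. Both parts follow from Lemma \ref{l:r-part}.

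First assertion. Suppose $(5,a-1)=(5,b-1)$. Then $k_5(a)=k_5(b)$ gives $\Phi_5(a)=\Phi_5(b)$. By Fact (a) this forces $a=b$, contradicting $|a|\neq|b|$. Hence $(5,a-1)\ne(5,b-1)$. This step uses only $|a|,|b|\ge 2$, which the prime-power hypothesis guarantees.

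Second assertion. Suppose also $k_5(-a)=k_5(-b)$. Multiplying the two equalities, and using $\Phi_5(x)\Phi_5(-x)=\Phi_5(x^2)$, gives
$$\frac{\Phi_5(a^2)}{(5,a-1)(5,a+1)}=\frac{\Phi_5(b^2)}{(5,b-1)(5,b+1)}.$$
Now $(5,x-1)(5,x+1)=(5,x^2-1)$, because $5$ cannot divide both $x-1$ and $x+1$. So the denominators are $(5,a^2-1)$ and $(5,b^2-1)$, and the identity reads $k_5(a^2)=k_5(b^2)$.

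It remains to see that these denominators agree. By Fact (b), each side has $5$-part $1$ as an integer, and $5\mid\Phi_5(x^2)$ exactly when $5\mid x^2-1$. So if $5\mid a^2-1$ but $5\nmid b^2-1$, the left-hand numerator would be divisible by $5$ while the right-hand side is not, after accounting for the single factor of $5$. Checking this $5$-adic bookkeeping against Fact (b) is the only delicate point. Granting it, the denominators coincide, so $\Phi_5(a^2)=\Phi_5(b^2)$. Fact (a) then gives $a^2=b^2$, i.e. $|a|=|b|$, a contradiction.

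Alternatively, one can argue case by case. Say $5\mid a-1$ and $5\nmid b-1$. Then $\Phi_5(a)=5\Phi_5(b)$, and $5\nmid a+1$. If $5\nmid b+1$, then $k_5(-a)=k_5(-b)$ gives $\Phi_5(-a)=\Phi_5(-b)$, so $a=b$ by Fact (a). If $5\mid b+1$, then $5\Phi_5(-a)=\Phi_5(-b)$. Multiplying gives $\Phi_5(a^2)=\Phi_5(b^2)$, and again $|a|=|b|$.

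I expect no real obstacle. The only care needed is the opposite-sign case of Fact (a), which is settled by the sandwich $\Phi_{10}(a)<\Phi_5(a)<\Phi_{10}(a+1)$.
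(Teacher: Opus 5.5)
Fact (a), Fact (b) and your proof of the first assertion are correct. Fact (a) is the same sandwich comparison the paper uses.

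\textbf{The gap.} Your main argument for the second assertion fails at the step you flag as ``the only delicate point''. No $5$-adic bookkeeping can force $(5,a^2-1)=(5,b^2-1)$. As you note yourself, each side of $k_5(a^2)=k_5(b^2)$ has $5$-part $1$. In fact $k_5(x)$ is always prime to $5$, since $e(5,x)$ divides $4$ and so $5\notin R_5(x)$. So in the mismatched case $5\mid a^2-1$, $5\nmid b^2-1$, the equation reads $\Phi_5(a^2)/5=\Phi_5(b^2)$ with both sides prime to $5$. Nothing $5$-adic rules this out, and injectivity of $\Phi_5$ does not rule it out either. ``Granting it'' therefore grants an unproved claim. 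The missing ingredient is the first assertion applied to the pair $(-a,-b)$, which gives $(5,a+1)\ne(5,b+1)$.

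\textbf{The fix.} Your ``alternatively'' paragraph supplies exactly this missing ingredient, and it is a complete and correct proof. By the first assertion you may take $5\mid a-1$ and $5\nmid b-1$.
\begin{itemize}
\item In the subcase $5\nmid b+1$, you are applying the first assertion to $(-a,-b)$.
\item In the subcase $5\mid b+1$, the relations $\Phi_5(a)=5\Phi_5(b)$ and $5\Phi_5(-a)=\Phi_5(-b)$ multiply to $\Phi_5(a^2)=\Phi_5(b^2)$. Fact (a) then gives $|a|=|b|$.
\end{itemize}
This is the paper's argument. The paper phrases it as: $(5,a-1)\neq(5,b-1)$ and $(5,a+1)\neq(5,b+1)$ force $(5,a^2-1)=(5,b^2-1)=5$, so $k_5(a^2)=k_5(b^2)$, contradicting the first claim since $a^2\neq b^2$.

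Drop the first version and present the case analysis as the proof.
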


\begin{proof}
Assume that $(5,a-1)=(5,b-1)$. Then $(a^5-1)/(a-1)=(b^5-1)/(b-1)$. Since  $f(x)=(x^5-1)/(x-1)$ is strictly increasing on $[2,+\infty]$ and strictly decreasing on $[-\infty,-2]$, we may assume that $a<0<b$. If $|a|\leq b$, then it is easy to see that $f(a)<f(b)$. If $|a|\geq b+1$, then $f(a)>|a|^4-|a|^3\geq b(b+1)^3>f(b)$, a contradiction.

Assume that $k_5(-a)=k_5(-b)$. By the above, it follows that $(5,a-1)\neq (5,b-1)$ and $(5,a+1)\neq (5,b+1)$. We may assume that $(5,a-1)=5$ and $(5,b-1)=1$. Then $(5,a+1)=1$ and $(5,b+1)=5$. Hence $(5,a^2-1)=(5,b^2-1)$ and  $k_5(a^2)=k_5(a)k_5(-a)=k_5(b)k_5(-b)=k_5(b^2)$. This contradicts the first claim since $a^2\neq b^2$. 
\end{proof}

\begin{lemma} \label{l:igcd} Let $a\in \mathbb Z$, $a$ is odd and $|a|>1$. The following hold:
\begin{enumerate}
 \item $(k_8(a)-1, k_7(a)-1)$ divides $3(a+1)$ if $(7,a-1)=1$ and $75(a-1)$ otherwise;
 
 \item $(k_8(a)-1, k_5(a)-1)$ divides $(a^2+1)(a+1)$ if $(5,a-1)=1$ and $4(a-1)$ otherwise;
 
 \item $(k_5(a)-1, k_{10}(a)-1)$ divides $2a(a^2+1)$ if $(5,a^2-1)=1$ and $4(a-1)$ otherwise.
 
\end{enumerate}

\end{lemma}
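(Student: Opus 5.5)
The plan is to express every $k_i(a)$ involved as an explicit rational function of $a$ using Lemma \ref{l:kiPhi}, and then bound each gcd by elementary polynomial arithmetic: factor, and use resultants or Euclidean steps over $\mathbb Z[a]$.

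For odd $a$, Lemma \ref{l:kiPhi} gives the following. For $i=8$ we have $r=2$ and $l=1$, and $(2,a-1)=2$, so
$$k_8(a)=\frac{a^4+1}{2}, \qquad k_8(a)-1=\frac{(a-1)(a+1)(a^2+1)}{2}.$$
For $i=7$ and $i=5$ we have $l=1$, so $k_7(a)=\Phi_7(a)/(7,a-1)$ and $k_5(a)=\Phi_5(a)/(5,a-1)$. For $i=10$ we have $r=5$ and $l=2$, so $k_{10}(a)=\Phi_{10}(a)/(5,a+1)$. In each item we therefore split according to whether the relevant gcd with $5$ or $7$ is trivial, exactly as the statement does.

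\emph{Trivial case.} Here $k_i(a)-1=\Phi_i(a)-1$ factors nicely:
$$\Phi_7(a)-1=\frac{a(a^6-1)}{a-1}=a(a+1)(a^2+a+1)(a^2-a+1),$$
$$\Phi_5(a)-1=a(a+1)(a^2+1), \qquad \Phi_{10}(a)-1=a(a-1)(a^2+1).$$
Then:
\begin{enumerate}
\item For (1), $a$ is coprime to $a^4-1$. The gcd of any two of the factors $a^2\pm a+1$, $a^2+1$ divides a power of $a$, hence equals $1$. Also $(a^2+a+1,a-1)\mid 3$ and $(a^2-a+1,a+1)\mid 3$, and these two cannot both equal $3$ because $3$ does not divide $a^2$. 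So only the common factor $a+1$ survives, and the gcd divides $3(a+1)$.
\item Item (2) is immediate: the common part is $(a+1)(a^2+1)$.
\item For (3), the gcd of $a(a+1)(a^2+1)$ and $a(a-1)(a^2+1)$ divides $a(a^2+1)\cdot(a+1,a-1)$, which gives the bound $2a(a^2+1)$.
\end{enumerate}

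\emph{Nontrivial case.} Say $5\mid a-1$ (or $7\mid a-1$). Then
$$k_i(a)-1=\frac{\Phi_i(a)-i}{i}, \qquad \Phi_i(a)-i=\sum_{j<i}(a^j-1),$$
which is $(a-1)$ times an explicit polynomial $g_i(a)$. We then bound the gcd of $g_i(a)$ with the remaining factors $(a+1)(a^2+1)/2$ (or, in (3), with $g_{10}$-type expressions) by reducing modulo each factor:
\begin{enumerate}
\item Substituting $a\equiv -1$ leaves a small constant.
\item Substituting $a^2\equiv -1$ leaves a linear expression in $a$, which is combined with $a^2+1$ to give a constant.
\end{enumerate}
Multiplying these constants together with the $a-1$ factor should yield $75(a-1)$ in (1) and $4(a-1)$ in (2). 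In (3), when $5\mid a^2-1$ exactly one of $k_5,k_{10}$ is divided by $5$, and the analogous computation, reducing mod $a-1$ and mod $a+1$, should give $4(a-1)$.

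The main obstacle is this last bookkeeping: computing the constants modulo $a+1$ and $a^2+1$ carefully, including the $2$-parts coming from the divisions by $2$ and by $i$, so that the resulting constants come out as precisely $75$ and $4$ and not merely some multiple of them. I would first carry out each such computation as an explicit resultant in $\mathbb Z[a]$, and then check the $2$-adic contributions separately using Lemma \ref{l:r-part}.
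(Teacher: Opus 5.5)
\textbf{Parts (1) and (2): correct, same approach as the paper.} Your plan here is the paper's argument done by hand. The paper gets the primitive gcd $x-1$ and the Bezout denominator from GAP; you extract the factor $a-1$ and reduce the cofactor modulo $a+1$ and $a^2+1$. The constants do come out as required.

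In (1), put $g_7(x)=x^5+2x^4+3x^3+4x^2+5x+6$. Then $g_7(-1)=3$, $g_7(a)\equiv 3a+4 \pmod{a^2+1}$, and $9(a^2+1)-(3a+4)(3a-4)=25$. This gives exactly $75(a-1)$.

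In (2), put $g_5(x)=x^3+2x^2+3x+4$. Then $g_5(-1)=2$ and $g_5(a)\equiv 2(a+1)\pmod{a^2+1}$. The extra factor $2$ you were worried about never appears, because $g_5(a)\equiv 2 \pmod 4$ for odd $a$. So the gcd in (2) even divides $2(a-1)$.

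In the trivial case of (1), the pairwise gcds $(a+1,a-1)=(a+1,a^2+1)=2$ are harmless only because $a$ and $a^2\pm a+1$ are odd. Hence $k_7(a)-1$ has $2$-part exactly $(a+1)_2$, and you should say this explicitly.

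\textbf{Part (3): a genuine gap.} You assert that the nontrivial case \emph{should give} $4(a-1)$. This is false when $5\mid a+1$, so no amount of bookkeeping will produce it.

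In that subcase $k_5(a)-1=a(a+1)(a^2+1)$ and $k_{10}(a)-1=(\Phi_5(-a)-5)/5=-(a+1)g_5(-a)/5$. So the common factor is $a+1$, not $a-1$. Reduce $g_5(-a)$ modulo $a$ (giving $4$) and modulo $a^2+1$ (giving $2(1-a)$). This shows the gcd divides $2(a+1)$, and the bound is sharp. For $a=9$ one has $k_5(9)=7381$ and $k_{10}(9)=5905/5=1181$, so $(7380,1180)=20$, which does not divide $4(a-1)=32$.

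Conceptually, $k_{10}(a)=k_5(-a)$, so the gcd in (3) is invariant under $a\mapsto -a$. No bound of the form $c(a-1)$ can hold in both subcases.

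Your description of the reduction is also off. In (3) only one of the two numbers has the $g$-form; the other is $a(a\mp1)(a^2+1)$. So you reduce $g_5(\pm a)$ modulo $a$ and $a^2+1$, not modulo $a\mp 1$.

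The correct statement is: the gcd divides $4(a-1)$ if $5\mid a-1$, and $4(a+1)$ if $5\mid a+1$. The paper's \emph{a similar calculation yields (2) and (3)} hides this slip. It is harmless for the only application, Lemma~\ref{l:aut}, which needs only that the prime divisors of the gcd lie in $\pi(q(q^4-1))$.
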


\begin{proof}

If $(7,a-1)=1$, then $$(k_8(a)-1, k_7(a)-1)=\left(\frac{a^4-1}{2}, \frac{a(a^6-1)}{a-1}\right)=$$ $$=(a^2-1, (a+1)(a^4+a^2+1))=(a+1)(a-1, a^4+a^2+1)=(a+1)(a+1,3).$$

If $(7,a-1)=7$, then $(k_8(a)-1, k_7(a)-1)$ divides $(f(a), g(a))$, where $f(x)=x^4-1$ and $g(x)=(x^7-7x+6)/(x-1)$. Let  $h(x)\in \mathbb Z[x]$ be the primitive greatest common divisor of $f(x)$ and $g(x)$ (regarded as polynomials over $\mathbb Q$). There are $f_1(x),g_1(x)\in \mathbb Q[x]$ such that 
$f(x)f_1(x)+g(x)g_1(x)=h(x)$. If $m$ is the least positive integer such that $mf_1, mg_1\in \mathbb Z[x]$, then $(f(a), g(a))$ divides $mh(a)$ for all integer $a$. Using, for example, the commands \texttt{Gcd} and \texttt{ GcdRepresentation} of GAP \cite{GAP}, one can verify that $h(x)=x-1$ and $m=75$. Thus (1) follows. A similar calculation yields (2) and (3). 
\end{proof}

The next two lemmas  are proved in the same way as Lemma \ref{l:igcd}. 

\begin{lemma} \label{l:igcd1} Let $a\in \mathbb Z$ and $|a|>1$. 
\begin{enumerate}
 \item If $a$ is even, then $(7k_8(a)-1, \Phi_i(a))$ divides $31\cdot 61$, $43$ or $13$ for $i=5$, $3$ or $4$, respectively.
 
 \item If $a$ is odd, then $(7k_8(a)-1, \Phi_i(a))$ divides $11\cdot 151$, $39$ or $12$ for $i=5$, $3$ or $4$, respectively. 
 
 \item Suppose that $(5,a-1)=5$. Then $(7k_5(a)-1, \Phi_i(a^2))$ divides $2\cdot 1297$ or $3\cdot 13\cdot 109$, and $(k_5(a)-1, \Phi_i(a^2))$ divides $2\cdot 97$ or $3\cdot 7\cdot 31$ for $i=4$ or $3$, respectively.
   
 \item If $(5,a-1)=1$, then $(7k_5(a)-1, \Phi_i(a^2))$ divides $2\cdot 337$ or $3\cdot 19\cdot 43$ for $i=4$ or $3$, respectively.
\end{enumerate}
\end{lemma}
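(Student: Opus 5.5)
The argument follows the one already used for Lemma \ref{l:igcd}: replace each integer gcd by a gcd of polynomials in $\mathbb Z[x]$ and apply a Bézout identity over $\mathbb Q$ after clearing denominators. For each item, first write $k_8(a)$ and $k_5(a)$ explicitly as polynomials in $a$ using Lemma \ref{l:kiPhi}. We have $k_8(a)=\Phi_8(a)/(2,\Phi_1(a))$, so $k_8(a)=a^4+1$ for $a$ even and $k_8(a)=(a^4+1)/2$ for $a$ odd. Likewise $k_5(a)=\Phi_5(a)/(5,a-1)$, which gives $k_5(a)=(a^4+a^3+a^2+a+1)/5$ when $(5,a-1)=5$ and $k_5(a)=\Phi_5(a)$ otherwise. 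This splits every item into a fixed case in which both arguments of the gcd are values at $a$ of integer polynomials, after multiplying by a bounded constant where needed. For example, in (2) we have $7k_8(a)-1=(7a^4+5)/2$, so $(7k_8(a)-1,\Phi_i(a))$ divides $(7a^4+5,\Phi_i(a))$. In (3) we use $5(7k_5(a)-1)=7\Phi_5(a)-5$, and the factor $5$ is harmless because $\Phi_i(a^2)$ is coprime to $5$ when $a\equiv1\pmod 5$ and $i=3,4$.

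For each such pair $f(x),g(x)\in\mathbb Z[x]$ (with $g=\Phi_i(x)$ or $\Phi_i(x^2)$), first check that $f$ and $g$ are coprime over $\mathbb Q$. Here $g$ is irreducible, or a product of two cyclotomic factors when $g=\Phi_i(x^2)$, and $f$ does not vanish at the corresponding roots of unity: at a root of unity $\zeta$, $7\zeta^4+5\neq 0$ since $|7\zeta^4|\neq 5$, and similarly in the other cases. Then compute $f_1,g_1\in\mathbb Q[x]$ with $ff_1+gg_1=1$, and let $m$ be the least positive integer with $mf_1,mg_1\in\mathbb Z[x]$. Then $(f(a),g(a))$ divides $m$ for every integer $a$. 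This is exactly the resultant-type computation, done with \texttt{GcdRepresentation} in GAP. Since $g$ is monic, $m$ divides the resultant $\operatorname{Res}(f,g)$, so the answer can also be checked by computing resultants by hand. For instance, $\operatorname{Res}(\Phi_4,7x^4+5)=(7+5)^2=144$, and the odd-case bound is $12$ after the factor-of-2 bookkeeping. Likewise, $\operatorname{Res}(\Phi_3,7x^4+5)$ evaluates via $x^4=x$ at primitive cube roots of unity to $\mathrm{N}(7\omega+5)=49-35+25=39$. Both agree with the stated values.

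A final sharpening step is needed. The integer $m$, or the resultant, may carry extra prime powers, typically powers of $2$, $3$ or $5$ coming from the denominators, that cannot actually occur. These have to be removed by congruence arguments using the parity of $a$ and the hypothesis on $(5,a-1)$. For example, when $a$ is even, $\Phi_i(a)$ is odd, so any power of $2$ in $m$ can be dropped. In (3), $\Phi_4(a^2)=a^4+1\equiv 2\pmod 5$, so $5$ never divides the gcd. In (1), $\operatorname{Res}(\Phi_5,7x^4+1)$ evaluated at primitive fifth roots of unity factors as $31\cdot61$, which matches the stated bound.

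I expect the main obstacle to be this bookkeeping. One has to choose the normalisation of $f$ (whether or not the halving or the division by $5$ is built in) so that the resulting bound is exactly the stated number rather than a multiple of it, and then discard the spurious small prime factors case by case using the congruence conditions on $a$. The polynomial Bézout computation itself is routine.
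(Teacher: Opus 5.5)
Your proposal is correct and follows the paper's route. The paper proves this lemma ``in the same way as Lemma~\ref{l:igcd}'', by a \texttt{GcdRepresentation}-type B\'ezout identity over $\mathbb Q$ with denominators cleared; your resultant cross-checks and your removal of spurious factors are the right bookkeeping, namely coprimality with $5$ in (3) and the fact that $a^4+1$ and $a^4+a^2+1$ have $2$-part at most $2$.

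There are two small slips to fix:
\begin{enumerate}
\item In (1), for even $a$ you have $7k_8(a)-1=7a^4+6$, not $7a^4+1$. It is $\operatorname{Res}(\Phi_5,7x^4+6)=1891=31\cdot 61$; the polynomial you wrote gives $2101=11\cdot 191$.
\item In (2) with $i=4$, $2$-power bookkeeping alone does not reduce the resultant $144$ to a divisor of $12$. It is the identity $7x^4+5=7(x^4-1)+12$, or the fact that $3\nmid a^2+1$, that does so.
\end{enumerate}
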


\begin{lemma}\label{l:igcd2} Let  $a\in \mathbb Z$, $|a|>1$ and $f(x)=x^3+2x^2+3x+4$. Then $(f(a),\Phi_i(a^2))$ divides $10$,
$4$, $7\cdot 31$ or $2\cdot 97$ for $i=1,2,3$ or $4$, respectively. 
\end{lemma}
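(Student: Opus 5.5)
Following the template of Lemma \ref{l:igcd}, we treat each of the four gcds as a gcd of values of two integer polynomials at $x=a$, and bound it by a resultant-type certificate. Write $g(x)=x^3+2x^2+3x+4$ and, for $i=1,2,3,4$, $\Phi_i(x^2)$, that is, $x^2-1$, $x^2+1$, $x^4+x^2+1$ and $x^4+1$. First we check that $g$ and $\Phi_i(x^2)$ are coprime in $\mathbb Q[x]$. Since $\Phi_i(x^2)$ factors into cyclotomic polynomials $\Phi_j(x)$ with $j\in\{1,2,4,3,6,8\}$, it is enough to note that $g$ has no root of unity among its roots. Indeed, $g(\pm1)=10,2$, $g(\pm i)=2\pm 2i$, and $g$ has degree $3$, so it cannot share a root with the degree-$4$ irreducible $\Phi_8$ unless $\Phi_8\mid g$. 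The primitive cube and sixth roots are excluded by direct reduction. For instance, modulo $x^2+x+1$ we have $x^3=1$ and $x^2=-x-1$, so $g\equiv 1+2(-x-1)+3x+4=x+3\neq 0$.

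Hence there are $u_i,v_i\in\mathbb Q[x]$ with $u_i g+v_i\Phi_i(x^2)=1$. Let $m_i$ be the least positive integer with $m_iu_i,m_iv_i\in\mathbb Z[x]$. Substituting $x=a$ shows that $(g(a),\Phi_i(a^2))$ divides $m_i$ for every integer $a$. Equivalently, the gcd always divides the resultant $\operatorname{Res}(g,\Phi_i(x^2))$, and then $m_i$ is a divisor of it. The whole proof therefore reduces to computing $m_i$, exactly as with \texttt{GcdRepresentation} in GAP, and checking that $m_i$ equals $10$, $4$, $7\cdot31$ and $2\cdot97$ respectively.

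For small $i$ this can be done by hand, which also serves as a sanity check on the numbers.

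For $i=1$, we have $g(a)\equiv g(\pm1)\pmod{a\mp1}$, so any common divisor of $g(a)$ and $a^2-1$ divides $10$ or $2$. A prime-power refinement, using that $a-1$ and $a+1$ share only the factor $2$, then gives that the gcd divides $10$.

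For $i=2$, reducing modulo $a^2+1$ gives $g(a)\equiv 2a+2$. Hence the gcd divides $(2a+2,a^2+1)$, which divides $2(a+1,a^2+1)$, which divides $2\cdot 2=4$.

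For $i=3$, $\Phi_3(x^2)=(x^2+x+1)(x^2-x+1)$. Reduction gives $g\equiv x+3$ modulo the first factor and $g\equiv 5x+1$ modulo the second. Then $(a+3,a^2+a+1)$ divides $9-3+1=7$, and $(5a+1,a^2-a+1)$ divides $1+5+25=31$. This yields $7\cdot 31$.

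For $i=4$, we work modulo $x^4+1$ and reduce $g$ via the Euclidean algorithm, tracking denominators. The result should be $2\cdot97$; a useful check is that $\operatorname{Res}(g,x^4+1)$ is divisible by $97$.

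The main obstacle is the bookkeeping of $2$-parts and the exact minimal denominators. Bounding the gcd by the resultant is straightforward, but we must pin it down to the stated divisors rather than just the resultant. For $i=3$ the two quadratic factors are coprime to each other, since $(a^2+a+1,a^2-a+1)=1$ as both are odd. For $i=1$ the overlap of $a\pm1$ at the prime $2$ needs care. We resolve both by the computer check of $m_i$, as in Lemma \ref{l:igcd}.
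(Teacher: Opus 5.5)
Your proposal is correct and is essentially the paper's proof. The paper proves this lemma exactly as Lemma~\ref{l:igcd}: it computes a B\'ezout identity over $\mathbb Q$ in GAP (here $h=1$) and takes its minimal common denominator $m_i$, and your hand reductions for $i=1,2,3$ correctly confirm $10$, $4$, $7\cdot 31$. One word needs tightening, ``equivalently'': for $i=4$ one has $\operatorname{Res}(g,x^4+1)=388=4\cdot 97$, so the bound $2\cdot 97$ does not follow from the resultant alone, and you need $m_4$ itself (which is $194$) or the extra remark that $(a^4+1)_2\le 2$.
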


In the rest of this section,  $q$ is a power of a prime $p\geq 2$ and $L$ is one of the groups $L_8^\varepsilon(q)$, $O_{10}^\varepsilon (q)$, or $O_{12}^\varepsilon(q)$, where $\varepsilon\in \{+,-\}$ in the first two cases and $\varepsilon=+$ in the last case. We introduce the numbers $z$, $y$, $m_z(L)$ and $m_y(L)$ according to Table \ref{tab:zy}. Also for $x\in \{z,y\}$, we define $R_x(L)=R_x(\varepsilon q)$,  $r_x(L)=r_x(\varepsilon q)$ and $k_x(L)=k_x(\varepsilon q)$. The caption of Table \ref{tab:zy} is clarified by (2) and (3) of Lemma \ref{l:zy} below.

\begin{table}[th]
\caption{The primes not adjacent to $2$ and $p$ in $GK(L)$}\label{tab:zy}
\renewcommand{\arraystretch}{1.3}
$
\begin{array}{|l|l|c|c|c|c|}
\hline
L&\text{ Conditions on }L &z&y&m_z(L) & m_y(L)\\
\hline
L_8^\varepsilon(q)& 8<(q-\varepsilon)_2& 8&7& \frac{q^8-1}{8(q-\varepsilon)}&
\frac{q^{7}-\varepsilon}{8}\\
&8\geq (q-\varepsilon)_2&7& 8&\frac{q^{7}-\varepsilon}{(8,q-\varepsilon)}&\frac{q^8-1}{(q-\varepsilon)(8,q-\varepsilon)}\\
\hline
O_{10}^\varepsilon(q)   &  q\equiv \varepsilon\,(\mathrm{mod}\ 4) &{8}& 5&\frac{(q^{4}+1)(q+\varepsilon)}{4}& \frac{q^5-\varepsilon}{4}\\
 &  q\not\equiv \varepsilon\,(\mathrm{mod}\ 4) & 5 & 8& \frac{q^5-\varepsilon}{(2,q-\varepsilon)}& \frac{(q^{4}+1)(q+\varepsilon)}{(2,q-\varepsilon)}\\
\hline
O_{12}^+(q)&q\equiv1\,(\mathrm{mod}\ 4)& 10& 5&\frac{q^{5}+1}{2}& \frac{q^{5}-1}{2}\\
&q\not\equiv 1\,(\mathrm{mod}\ 4)& 5& 10&\frac{q^{5}-1}{(2,q-1)}& \frac{q^{5}+1}{(2,q-1)}\\
\hline
\end{array}
$
\end{table}
 
\begin{lemma} \label{l:zy} Let $r\in R_z(L)$ and $s\in R_y(L)$. The following hold.
\begin{enumerate}
  \item $t(r,L)=t(s,L)=t(L)$.
 \item $\{2,r\}$ is a coclique in $GK(L)$ and $m_z(L)$ is the only number in $\mu(L)$ divisible by $r$.
 \item $\{p,r,s\}$ is a coclique in $GK(L)$ and $m_y(L)$ is the only number in $\mu(L)$ divisible by $s$.
 \item $m_z(L)$ and $m_y(L)$ are coprime, in particular, $r$ and $s$ have no common neighbors in $GK(L)$.
 \item If $GK(L)$ is connected, then, conversely, two different primes $r_1$ and $s_1$ are not adjacent and have no common neighbors in $GK(L)$ only if one of them lies in $R_z(L)$ and another lies in $R_y(L)$.
 \item For every $x\in\{z,y\}$, there is $\eta_x\in\{+1,-1\}$ such that if $r\in\pi(m_x(L))\setminus R_x(L)$ and $t(r,L)=2$, then $r\in\pi(q-\eta_x)$.
 \end{enumerate}
\end{lemma}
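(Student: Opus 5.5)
The proof is a verification from the known descriptions of spectra and prime graphs. The adjacency criteria come from \cite{05VasVd.t, 11VasVd.t}. The sets $\mu(L)$ come from \cite{08But.t} for $L_8^\varepsilon(q)$ and from \cite{10But.t} (with the corrections of \cite[Lemma 2.3]{16Gr.t}) for $O_{10}^\varepsilon(q)$ and $O_{12}^+(q)$. I would treat the three families separately, and within each family the two rows of Table \ref{tab:zy}. For each case I would first write out the elements of $\mu(L)$ divisible by a primitive divisor $r_i(\varepsilon q)$ with $\varphi(i)$ large. These are the ``semisimple'' maximal orders coming from maximal tori.
\begin{itemize}
\item For $L_8^\varepsilon(q)$ these are $(q^8-1)/((8,q-\varepsilon)(q-\varepsilon))$ and $(q^7-\varepsilon)/(8,q-\varepsilon)$. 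The next candidates, such as $[q^6-1,q^2-1]$-type orders and orders of the form $p\cdot(q^6-1)/\ldots$, are divisible neither by $r_7$ nor by $r_8$.
\item For $O_{10}^\varepsilon(q)$ the relevant orders are $(q^4+1)(q+\varepsilon)/(2,\ldots)$ and $(q^5-\varepsilon)/(2,\ldots)$.
\item For $O_{12}^+(q)$ they are $(q^5\pm1)/(2,q-1)$.
\end{itemize}

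Assertions (2) and (3) then come from reading off divisibility. The only element of $\mu(L)$ divisible by $r\in R_z(L)$ is $m_z(L)$, and similarly for $s\in R_y(L)$. So $r$ is adjacent to $2$ iff $m_z(L)$ is even, and $s$ is adjacent to $p$ iff $p$ divides $m_y(L)$. The rows of Table \ref{tab:zy} are arranged exactly so that $m_z(L)$ is odd. For example, for $L_8^\varepsilon$ with $(q-\varepsilon)_2>8$, the number $(q^8-1)_2/(8(q-\varepsilon)_2)=(q+\varepsilon)_2(q^2+1)_2(q^4+1)_2/8$ equals $1$, by Lemma \ref{l:r-part}. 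The other rows are checked the same way. For the pair $r,s$, nonadjacency is immediate since no element of $\mu(L)$ is divisible by both.

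Assertion (4) is the gcd computation $(m_z(L),m_y(L))=1$, done using Lemma \ref{l:r-part} and the fact that $(q^a-\eta,q^b-\eta')$ divides $q^{(a,b)}\pm1$. For instance, $((q^8-1)/(q-\varepsilon),q^7-\varepsilon)$ divides $q-\varepsilon$ up to a power of $2$, and the factors $(8,q-\varepsilon)$ are exactly what kill the common part. Assertion (4) together with (2) and (3) gives that $r$ and $s$ have no common neighbours, because any common neighbour would divide an element of $\mu(L)$ containing $r$ and an element containing $s$.

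For (1), I would exhibit a coclique of size $t(L)=4$ containing both $r$ and $s$. For $O_{12}^+$ this is $\{r_5,r_{10},r_{12},r_8\}$ or similar, while for $L_8^\varepsilon$ and $O_{10}^\varepsilon$ one can take $\{2 \text{ or } p, r_z, r_y, \ldots\}$ with a suitable fourth primitive divisor. In each case the fourth prime is taken from \cite[Tables 2,3]{11VasVd.t}, and the coclique property is checked via (2), (3) and the criterion.

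For (6), I would factor $m_x(L)/k_x(L)$ into cyclotomic pieces $\Phi_j(\varepsilon q)$ with $j<x$. A prime $r$ in such a piece belongs to some $R_j(\varepsilon q)$. If $j\ge 3$, then $r$ lies in a coclique of size $\ge 3$, for example $\{r,r_x', 2\}$ or $\{r,p,\cdot\}$, so $t(r,L)\ge 3$. Hence $t(r,L)=2$ forces $j\in\{1,2\}$, that is, $r\mid q^2-1$. Inside $m_x(L)$ only one of $q-1$ and $q+1$ survives, up to primes that are $2$ or divide the removed gcd, and that surviving factor determines $\eta_x$.

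The main obstacle is (5). Suppose $r_1,s_1$ are nonadjacent with no common neighbours. I would argue that then $t(r_1,L),t(s_1,L)$ are large and each lies in few maximal orders, so $r_1$ and $s_1$ must be primitive divisors $r_i$ with large $i$. I would then run through the list of $r_i$ for the remaining large $i$, together with $2$ and $p$, and exhibit common neighbours. This uses connectedness: for instance $p$ and most $r_i$ share the neighbour $r_1(q)$ or $r_2(q)$, and $2$ is adjacent to everything outside $R_z$. This case analysis is long but routine given the criterion.
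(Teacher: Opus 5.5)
\textbf{The gap: part (5).} Part (5) is the only part of the lemma with real content, and your sketch does not prove it.

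Your first step, that $r_1,s_1$ must have large $t(\cdot,L)$ and hence be primitive divisors of large index, is asserted rather than argued. It is essentially the conclusion itself.

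Some facts you then rely on are false. The number $2$ is \emph{not} adjacent to every prime outside $R_z(L)$:
for $L_8^\varepsilon(q)$ with $(q-\varepsilon)_2=8$, the number $m_y(L)$ (here $y=8$) is odd;
for $O_{10}^\varepsilon(q)$ with $(q-\varepsilon)_2=4$, both $m_z(L)$ and $m_y(L)$ are odd.
Also, odd primes in $R_1(\varepsilon q)$ or $R_2(\varepsilon q)$ need not exist. That is exactly where connectedness must enter, and you never say how.

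The missing idea is the paper's pair of universal neighbours.
\begin{enumerate}
\item For $L_8^\varepsilon(q)$ (resp.\ $O_{10}^\varepsilon(q)$), connectedness gives a prime $r_0$ outside $R_7(\varepsilon q)$ (resp.\ $R_5(\varepsilon q)$) adjacent to that set. The adjacency criterion forces $r_0$ to be an odd prime of $q-\varepsilon$, or $2$ with $(q-\varepsilon)_2>8$ (resp.\ $>4$). Any such $r_0$ is adjacent to every prime outside $R_8(q)$, so one of $r_1,s_1$ lies in $R_8(q)$.
\item Dually, an odd prime of $q+\varepsilon$, or $2$ when $1<(q-\varepsilon)_2<8$ (resp.\ $=2$), is adjacent to every prime outside $R_7(\varepsilon q)$ (resp.\ $R_5(\varepsilon q)$). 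Such a prime exists: for $O_{10}^\varepsilon(q)$ by connectedness, and for $L_8^\varepsilon(q)$ by a $2$-adic check ruling out $\pi(q+\varepsilon)\subseteq\{2\}$ together with $(q-\varepsilon)_2\ge 8$. So the other prime lies in $R_7(\varepsilon q)$ (resp.\ $R_5(\varepsilon q)$).
\item For $O_{12}^+(q)$, the neighbours of $R_5(\pm q)$ lie in $R_5(\pm q)\cup R_1(\pm q)$, and primes of $R_1(\pm q)$ are adjacent to everything outside $R_5(\mp q)$.
\end{enumerate}
With these observations (5) takes a few lines. Without them, your case analysis is not a proof.

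\textbf{The other parts.} Your treatment of (2)--(4), and the outline of (6), is the same verification the paper does: uniqueness of $m_z(L)$ and $m_y(L)$ in $\mu(L)$, with parities and gcds computed via Lemma~\ref{l:r-part}. However, several explicit cocliques you propose are wrong.

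In (1), $r_{12}(q)$ does not divide $|O_{12}^+(q)|$. Moreover, no coclique of size $4$ contains $p$ or $2$ here, since $t(p,L)=3$ and $t(2,L)\le 3$. Use instead:
\begin{enumerate}
\item $\{r_5(\varepsilon q),r_6(\varepsilon q),r_7(\varepsilon q),r_8(q)\}$ for $L_8^\varepsilon(q)$;
\item $\{r_5(\varepsilon q),r_8(q),r_3(\varepsilon q),r_4(q)\}$ for $O_{10}^\varepsilon(q)$;
\item $\{r_5(q),r_{10}(q),r_8(q),r_3(q)\}$ for $O_{12}^+(q)$.
\end{enumerate}

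In (6), $\{r,r_x',2\}$ is never a coclique, because $rr_x'$ divides $m_x(L)\in\omega(L)$. The only case with $j\ge 3$ is $r\in R_4(q)$ for $L_8^\varepsilon(q)$ with $x=8$. There $r$ is also adjacent to $p$, so $\{r,p,\cdot\}$ fails as well. The right witness is $\{r_4(q),r_5(\varepsilon q),r_6(\varepsilon q),r_7(\varepsilon q)\}$.
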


\begin{proof}
(1)--(3) The claims concerning cocliques follow straightforwardly from  \cite{05VasVd.t, 11VasVd.t}. The claims on $m_z(L)$ and $m_y(L)$ can be easily verified by using the description of $\omega(L)$ in \cite{08But.t,10But.t} and the Bang--Zsigmondy lemma. 

(4) A direct calculation using Lemma \ref{l:r-part}.

(5) Let $L=L_8^\varepsilon(q)$. By assumption, $GK(L)$ is connected, so there is $r_0\in\pi(L)\setminus R_7(q)$ adjacent to some $r_7(q)$. By  \cite[Propositions 2.1, 2.2, 3.1, 4.1 and 4.2]{05VasVd.t}, it follows that either $r_0\in R_1(\varepsilon q)\setminus\{2\}$, or  $(q-\varepsilon)_2>8$ and $r_0=2$. In either case $r_0$ is adjacent to every prime in $\pi(L)\setminus R_8(q)$. Thus one of $r$ and $s$, say $r$, lies in $R_8(\varepsilon q)$.  On the other hand, the neighborhood of any $r_8(q)$  includes  $R_2(\varepsilon q)\setminus\{2\}$, and if $1<(q-\varepsilon)_2<8$, then it includes $2$ as well. If $s_0\in R_2(\varepsilon q)\setminus\{2\}$, or $s_0=2$ and $1<(q-\varepsilon)_2<8$, then $s_0$ is adjacent to every prime in $\pi(L)\setminus R_7(\varepsilon q)$ and, therefore, $s\in R_7(\varepsilon q)$. It remains to consider the case when $\pi(q+\varepsilon)\subseteq\{2\}$ and $(q-\varepsilon)_2\geq 8$. In this case $q=2^l-\varepsilon$ and $(q-\varepsilon)_2=(2^l-2\varepsilon)_2\leq 4$, a contradiction.  
 
Let $L=O_{10}^\varepsilon(q)$. The proof is similar. As above, there is a prime $r_0$ ($s_0$) in $\pi(L)$ adjacent to $r_5(\varepsilon q)$ ($r_8(q)$). It follows that    $r_0\in R_1(\varepsilon q)\setminus\{2\}$, or $(q-\varepsilon)_2>4$ and $r_0=2$. In either case, $r_0$ is adjacent to every prime in $\pi(L)\setminus R_8(q)$. Also $s_0\in R_2(\varepsilon q)\setminus\{2\}$, or $(q-\varepsilon)_2=2$ and $s_0=2$, and hence $s_0$ is adjacent to every prime in $\pi(L)\setminus R_5(\varepsilon q)$.
 
Let $L=O_{12}^+(q)$ and $\epsilon\in\{+,-\}$. The neighborhood of $r_{5}(\epsilon q)$ is contained in $R_{5}(\epsilon q)\cup R_1(\epsilon q)$. Every element of $R_1(\epsilon q)$ is adjacent to every prime in $\pi(L)\setminus R_{5}(-\epsilon q)$.

(6) It is easy to see that $\pi(m_x(L))\setminus R_x(L)$ is contained in $\pi(q-1)$ or $\pi(q+1)$ unless $L=L_8^\varepsilon(q)$ and $x=8$. In this case  $\pi(m_x(L))\setminus R_x(L)\subseteq\pi(q+\varepsilon)\cup R_4(q)$, and it remains to note that $t(r,L)=4$ for every $r\in R_4(q)$.
\end{proof}

Lemma \ref{l:bounds} yield the following bounds on $k_z(L)$ and $k_y(L)$.

\begin{lemma}\label{l:kzky} If $q$ is odd, then $$\max\{k_z(L), k_y(L)\}\geq (q^4+1)/2, \quad \min\{k_z(L), k_y(L)\}\geq q^4/6+1.$$
\end{lemma}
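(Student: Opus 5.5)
This is a case check over Table~\ref{tab:zy}. In each row we identify the indices $\{z,y\}$ attached to $\varepsilon q$, then feed each index into the matching part of Lemma~\ref{l:bounds}. Because $q$ is odd, $p\neq 2$, and the exceptional case $\varepsilon q=-4$ in Lemma~\ref{l:bounds}(3) never occurs.

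The rows give the following index pairs:
\begin{itemize}
\item[] $L=L_8^\varepsilon(q)$: $\{z,y\}=\{7,8\}$, with $k_7(\varepsilon q)$ and $k_8(\varepsilon q)=k_8(q)$.
\item[] $L=O_{10}^\varepsilon(q)$: $\{z,y\}=\{5,8\}$, with $k_5(\varepsilon q)$ and $k_8(q)$.
\item[] $L=O_{12}^+(q)$: $\{z,y\}=\{5,10\}$, with $k_5(q)$ and $k_{10}(q)=k_5(-q)$.
\end{itemize}
Recall that $\Phi_8(-x)=\Phi_8(x)$, so $k_8(\varepsilon q)=k_8(q)$ in all cases.

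\emph{Upper bound on the maximum.} In the first two rows, one of the two indices is $8$. By Lemma~\ref{l:bounds}(2), $k_8(q)\geq (q^4+1)/2$, so the maximum is at least this value. In the third row we use Lemma~\ref{l:bounds}(3), which gives $k_5(q)k_{10}(q)>q^8/5+1$. Therefore
$$\max\{k_5(q),k_{10}(q)\}>\sqrt{q^8/5}=q^4/\sqrt5.$$
Since $q\geq 3$, we have $q^4/\sqrt5\geq (q^4+1)/2$. To see this, note that $(1/\sqrt5-1/2)q^4\approx 0.053\,q^4$, which is at least $4.3>1/2$ for $q\geq 3$. This settles the maximum.

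\emph{Lower bound on the minimum.} We bound each $k$ from below separately. For $k_8(q)$ we have $(q^4+1)/2\geq q^4/6+1$. For $k_7(\varepsilon q)$, Lemma~\ref{l:bounds}(4) gives $k_7(\varepsilon q)>q^6/8+1\geq q^4/6+1$. For $k_5(\varepsilon q)$, Lemma~\ref{l:bounds}(3) gives $k_5(\varepsilon q)>q^4/6+1$, valid because $\varepsilon q\neq -4$. Finally, $k_{10}(q)=k_5(-q)$ is covered by the same inequality with $-q\neq -4$.

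No step is a real obstacle. The only point that needs care is the third row, where neither index is $8$. There we cannot bound the maximum directly and must go through the product estimate above, which uses $q\geq 3$.
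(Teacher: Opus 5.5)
Your treatment of the rows for $L_8^\varepsilon(q)$ and $O_{10}^\varepsilon(q)$ is correct. So is the whole lower bound on the minimum. Both are the intended use of Lemma~\ref{l:bounds}.

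The gap is in the bound on the maximum for $L=O_{12}^+(q)$. The inequality $q^4/\sqrt5\geq (q^4+1)/2$ is false for every $q$, because $1/\sqrt5\approx 0.447<1/2$. You have a sign error: $(1/\sqrt5-1/2)q^4$ is about $-0.053\,q^4$, not $+0.053\,q^4$. This cannot be repaired by more careful arithmetic. The estimate $k_5(q)k_{10}(q)>q^8/5+1$ is compatible with both factors being close to $q^4/\sqrt5$. So no argument that uses only the product can give $\max\{k_5(q),k_{10}(q)\}\geq (q^4+1)/2$.

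The missing idea is that $5$ divides at most one of $q-1$ and $q+1$. By Lemma~\ref{l:kiPhi}, $k_5(q)=\Phi_5(q)/(5,q-1)$ and $k_{10}(q)=\Phi_{10}(q)/(5,q+1)$. Hence at least one of them equals the full cyclotomic value.

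This is the subcase in the proof of Lemma~\ref{l:bounds}(3) where $(5,q-\varepsilon)=1$ gives $k_5(\varepsilon q)-1>q^4/2$. You can apply it to whichever of $k_5(q)$ and $k_5(-q)=k_{10}(q)$ has trivial denominator.

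Alternatively, check directly that $\Phi_5(q)-\Phi_{10}(q)=2q^3+2q>0$ and $2\Phi_{10}(q)-(q^4+1)=(q-1)^2(q^2+1)\geq 0$.

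With the product estimate replaced by this observation, your case check is complete. The paper gives no details beyond citing Lemma~\ref{l:bounds}, and this corrected case check is what that citation amounts to.
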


The next lemma is concerned with the numbers $\exp_r(L)$ for not very small $r$. 

\begin{lemma}\label{l:exp}
Let $q$ be odd, $r\in\pi(L)$ and $r\geq 7$.
\begin{enumerate}
 \item If $r=p$, then $\exp_r(L)=r$ unless $r=7$ and $L\neq O_{10}^\varepsilon(q)$, in which case $\exp_r(L)=r^2$.
 \item If $r\in R_i(\varepsilon q)$, then $\exp_r(L)=(q^i-\varepsilon^i)_r$ unless $r=7$, $i=1$ and $L=L_8^\varepsilon(q)$, in which case $\exp_r(L)=r(q^i-\varepsilon^i)_r$.
 \item If $s\in \pi(L)$, $s>r$, $rs\in\omega(L)$ and $\exp_r(L)\cdot \exp_s(L)\not\in\omega(L)$, then $r=7$, $7\exp_s(L)\in\omega(L)$ and either  $L=L_8^\varepsilon(q)$, $q\equiv \varepsilon\pmod 7$ and $s\not\in R_7(\varepsilon q)$, or $p=7$ and $L\neq O_{10}^\varepsilon(q)$.
\end{enumerate}
\end{lemma}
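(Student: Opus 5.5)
Since all three claims are about exponents of Sylow subgroups and elements of the spectrum, I will read everything off the explicit descriptions of $\omega(L)$ in \cite{08But.t} for $L_8^\varepsilon(q)$ and in \cite{10But.t} (with the corrections of \cite[Lemma 2.3]{16Gr.t}) for $O_{10}^\varepsilon(q)$ and $O_{12}^+(q)$. In these descriptions every element of $\omega(L)$ divides a number of the form $p^k\cdot\mathrm{lcm}(q^{n_1}-\epsilon_1,\dots,q^{n_s}-\epsilon_s)$, sometimes divided by a small gcd, with $n_1+\dots+n_s$ and $k$ tied to the dimension. Throughout, the hypothesis $r\geq 7$ does the work: every divisor $(d,q-\varepsilon)$ or $(2,q-1)$ occurring in these formulas is a power of $2$ or divides $8$, $10$ or $12$, so such divisors can change $r$-parts only when $r=7$ (for $L_8^\varepsilon$) and never otherwise.

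For (1), I will use the fact that $\exp_p(L)$ is $p^k$, where $k$ is minimal with $p^k$ at least the largest Jordan block size of a unipotent element in the natural representation. That block size is $8$ for $L_8^\varepsilon(q)$, $9$ for $O_{10}^\varepsilon(q)$ (odd-dimensional blocks of the orthogonal group) and $11$ for $O_{12}^+(q)$. So $\exp_p=p$ exactly when $p\geq 8$, $9$, $11$ respectively. For $r\geq 7$ this gives $p$ in every case except $p=7$ with $L\neq O_{10}^\varepsilon$, where the exponent is $49$.

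For (2), I take $r\in R_i(\varepsilon q)$ and write $m$ for the order of $q$ modulo $r$, so $m\in\{i,2i,i/2\}$. The $r$-elements of maximal order come from a single torus cyclic of order $q^{j}\pm1$, with $j$ a multiple of $m$ and $j$ at most the rank. By Lemma \ref{l:r-part}, $(q^{j}-\epsilon)_r=(j/m)_r(q^m-\epsilon)_r$. Since $j\leq 6$ and $r\geq 7$, the factor $(j/m)_r$ is $1$, so any multiple of $m$ gives the same $r$-part $(q^i-\varepsilon^i)_r$. The only gain possible comes from adding unipotent parts, namely an $r$-element in $GL_r$ realized inside $GL_1(q^m)\wr \mathbb Z_r$. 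This needs $rm\leq$ the dimension, which for $r\geq7$ forces $m=1$, $r=7$, and dimension at least $7$ in the appropriate sense. For $L_8^\varepsilon(q)$ with $7\mid q-\varepsilon$ this happens, and the element of order $7(q-\varepsilon)_7$ survives the quotient by the centre. For the orthogonal groups, $i=1$ would need a $7$-fold wreath inside the $q-\varepsilon$ tori, and the spectrum formulas show that $\exp_7$ stays $(q^i-\varepsilon^i)_7$, because the rank of $O_{10}$ is $5<7$ and in $O_{12}^+$ the hyperbolic pairs give dimension $14>12$.

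For (3), $rs\in\omega(L)$ means some maximal spectrum number $\mu$ is divisible by $rs$. In the generic case, $(\mu)_r=\exp_r(L)$ and $(\mu)_s=\exp_s(L)$ by the $r$-part computation of (2), so $\exp_r(L)\exp_s(L)$ divides $\mu$. A failure therefore requires one of the exceptional exponents of (1) or (2), which forces $r=7$. I then check both exceptions.

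If $p=7$ and $\exp_7=49$, then any element involving a unipotent block of size at least $8$ uses up the whole space in $L_8$, or almost all of it in $O_{12}^+$. Hence $49\exp_s\notin\omega(L)$ while $7\exp_s\in\omega(L)$, via a block of size at most $7$ together with the torus carrying $\exp_s$.

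In the case $L_8^\varepsilon$ with $7\mid q-\varepsilon$, the element of order $7(q-\varepsilon)_7$ needs all $7$ coordinates of a monomial subgroup. So it commutes with an $s$-element only if $s$ is in $\pi(q-\varepsilon)$, or $s\in R_7(\varepsilon q)$ as a $7$-cycle, which is excluded in the statement. Meanwhile $7\exp_s$ is realized by a $7$-element of $GL_1$-type.

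The main obstacle is this last bookkeeping. One has to verify from the precise formulas in \cite{08But.t, 10But.t}, including the gcd divisors, that no other combination fails. This covers the $s=p$ cases and $O_{12}^+$ with $p=7$, where a unipotent block of size $8$ still leaves a $4$-dimensional complement, so I will check that this complement cannot carry $\exp_s$ for any $s>7$ adjacent to $7$.
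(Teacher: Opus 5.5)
Your overall strategy matches the paper's. You read (1) and (2) off the explicit spectra together with Lemma \ref{l:r-part}. For (3) you take $a\in\mu(L)$ divisible by $rs$ and compare $(a)_r,(a)_s$ with $\exp_r(L),\exp_s(L)$.

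\textbf{The step that fails is the last sentence of your argument for (1).} You correctly note that $O_{10}^\varepsilon(q)$ contains unipotent elements with a Jordan block of size $9$, namely type $(9,1)$, e.g.\ a regular unipotent element of $\Omega_9(q)\le\Omega_{10}^\varepsilon(q)$. You also correctly note that $\exp_p=p$ exactly when $p\ge 9$. But then for $p=7$ your own criterion gives $\exp_7(O_{10}^\varepsilon(7^m))=49$, since $(x-1)^7\neq 0$ for a $9\times 9$ Jordan block in characteristic $7$.

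So the exception ``$L\neq O_{10}^\varepsilon(q)$'' does not follow from what you wrote, and it cannot be repaired, because it is false. The same problem affects (3). Take $L=O_{10}^\varepsilon(7^m)$, $r=7$ and $s\in R_3(q)$, so $s\ge 13$.
\begin{itemize}
\item $7s\in\omega(L)$: take an order-$7$ unipotent element of type $(3,1)$ on a $4$-dimensional subspace, times an element of a torus of order $q^3-1$ on its $6$-dimensional complement of $+$ type.
\item $49\exp_s(L)\notin\omega(L)$: an element of order divisible by $49$ has unipotent part of type $(9,1)$, whose centralizer is a $\{2,7\}$-group.
\end{itemize}
Hence neither alternative in the conclusion of (3) holds. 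The paper's one-line proof (``follow from the description of $\omega(L)$'') overlooks this as well.

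What you can actually prove is the corrected statement:
\begin{itemize}
\item in (1), for $p\ge 7$ one has $\exp_p(L)=p$ unless $p=7$, in which case $\exp_p(L)=49$ for all three families;
\item in (3), the second alternative is just ``$p=7$''.
\end{itemize}
This correction is harmless for the later uses in Lemma \ref{l:adj} and Proposition \ref{p:t5}. For $L=O_{10}^\varepsilon(7^m)$, every element of $\omega(L)$ divisible by $49$ still divides $49(q\pm 1)$, so the same argument goes through.

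\textbf{Smaller points.}
\begin{itemize}
\item In (2), the bound $j\le 6$ is wrong for $L_8^\varepsilon(q)$, which has tori of orders $q^7-\varepsilon$ and $q^8-1$. The extra factor $7$ when $7\mid q-\varepsilon$ is just $(q^7-\varepsilon)_7=7(q-\varepsilon)_7$ from Lemma \ref{l:r-part}. It is semisimple, not a ``unipotent part''. Your wreath-product element lies in that torus, so your conclusion stands.
\item In (3), $s\notin R_7(\varepsilon q)$ is part of the conclusion, not something ``excluded in the statement''. You must show that $s\in R_7(\varepsilon q)$ forces $\exp_7(L)\exp_s(L)\in\omega(L)$. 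The paper does this by noting that in that case $a=m_7(L)=(q^7-\varepsilon)/(8,q-\varepsilon)$ by Lemma \ref{l:zy}, and $(a)_7=7(q-\varepsilon)_7=\exp_7(L)$. Your remark that the element of order $7(q-\varepsilon)_7$ and the $s$-elements lie in one cyclic torus gives exactly this once phrased as a contradiction.
\item $7\exp_s(L)\in\omega(L)$ needs no construction. It follows from $(a)_s=\exp_s(L)$ for the maximal $a$ divisible by $7s$, since $s\ge 11$ is not exceptional.
\item Consequently the extra checks you plan for $O_{12}^+(q)$ are unnecessary. Moreover, their premise is wrong: a Jordan block of size $8$ cannot occur there, since even blocks have even multiplicity.
\end{itemize}
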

\begin{proof}
(1) and (2) follow from the description of $\omega(L)$ and Lemma \ref{l:r-part}. 

(3) Since $rs\in\omega(L)$, there is $a\in\mu(L)$ divisible by $rs$. By (1) and (2), this number is divisible by $\exp_s(L)$ and if it is not divisible by $\exp_r(L)$, then $r=7$. Furthermore, either $p=7$ and $L\neq O_{10}^\varepsilon(q)$, or $L=L_8^\varepsilon(q)$ and $q\equiv \varepsilon\pmod 7$. In the letter case, assume that $s\in R_7(\varepsilon q)$. Then $a=m_7(L)=(q^7-\varepsilon)/(8,q-\varepsilon)$ by Lemma \ref{l:zy}, and $(a)_7=7(q-\varepsilon)_7=\exp_7(L)$, a contradiction.
\end{proof}

\section{Proof of Theorem \ref{t:main}: Case $t(S)\neq 4$} \label{s:large}

In this section we reduce the proof of Theorem \ref{t:main} to the case $t(S)=4$.
We begin with general observations which are valid for all $S$ and then confine ourselves to $S$ with $t(S)\geq 5$.

\subsection{General observations} Let $L$ be one of the groups $L_8^\varepsilon(q)$, $O_{10}^\varepsilon (q)$, or $O_{12}^+(q)$, where $q$ is a power of an odd prime $p$. As in Section \ref{s:graph}, we assume that $\varepsilon=+$  if $L=O_{12}^+(q)$. Recall that the theorem follows from \cite[Theorem 1]{25Pan_arxiv} and \cite[Table 7]{23Survey} if $GK(L)$ is disconnected. Thus we may assume that $GK(L)$ is connected. This implies, in particular, that $q\geq 5$ (see \cite{81Wil}).

Suppose that $G$ is a finite group such that $\omega(G)=\omega(L)$ and among nonabelian composition factors of $G$, there is a classical group over a field of characteristic other than $p$. Denote this classical group by $S$ and let $S$ be a group over a field of characteristic $v$ and order $u=v^\alpha$. By Lemma \ref{l:str}, we have $$S\leq \overline G=G/K\leq \Aut S,$$ where $K$ is the solvable radical of $G$, and $K$ is nilpotent.  Also $S\neq L_2(u)$ by Lemma \ref{l:red}. 

Let $z,y\in \mathbb N$ and $R_x(L)$, $k_x(L)$, $m_x(L)$ for $x\in\{z,y\}$ be as in Section \ref{s:graph}, the paragraph before Lemma \ref{l:zy}. Note that $R_z(L)\cap (\pi(K)\cup\pi(\overline G/S))=\varnothing$ by Lemma \ref{l:str}(3), in particular, $k_z(L)\in\omega(S)$. Also $R_y(L)\cap \pi(\overline G/S)=\varnothing$ by \cite[Lemma 3.5]{20GrZv.t}.

\begin{lemma}\label{l:K} The following hold.

\begin{enumerate}
\item If $r\in \pi(K)$ and $r\neq v$, then $t(r,L)=2$.
 \item If $t(r,L)=4$, then $r\not\in\pi(K)$. In particular, $R_y(L)\cap\pi(K)=\varnothing$ and so $k_y(L)\in \omega(S)$.
 \item If $\pi(K)\not\subseteq\{v\}$, then there is a unique $x\in \{z,y\}$ such that $r_x(L)$ divides the order of a proper parabolic subgroup of $S$ and  $\pi(K)\setminus\{v\}\subseteq \pi(q-\eta_x)$, where $\eta_x\in\{+1,-1\}$ is as in Lemma \ref{l:zy}. 
 \item If $r\in\pi(K)$, $(r,u(u^2-1))=1$ and $r^k$ divides the order of a proper parabolic subgroup of $S$, then $r^{k+1}\in\omega(G)$.
\end{enumerate}
\end{lemma}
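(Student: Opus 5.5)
We prove (1) and (2) first, since (3) and (4) rest on them. For (1), the plan is to show that every $r \in \pi(K)\setminus\{v\}$ is adjacent to "almost everything", so that $t(r,L)=2$ is the only possibility.

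Take $r\in\pi(K)$ with $r\neq v$. By Lemma \ref{l:v_ind}(1), $rv\in\omega(G)$. Since $K$ is nilpotent, we may pass to $G/O_{r'}(K)$ and assume $K$ is an $r$-group. The standard argument, \cite[Theorem 5.3.14]{68Gor} applied to abelian noncyclic subgroups of $S$ together with Hall--Higman type arguments, shows that $r$ is adjacent to every prime $s\in\pi(S)$ for which $S$ contains an elementary abelian subgroup of order $s^2$. If $r$ lay in a coclique of size $3$, then by Lemma \ref{l:str}(2) the other two primes $s_1,s_2$ of that coclique would lie in $\pi(S)$ and not in $\pi(K)$. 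We would then need each $s_i$ to have cyclic Sylow subgroups in $S$, with $r$ nonadjacent to both. Using the cocliques of $GK(L)$ from Lemma \ref{l:zy} (every coclique of size $\geq 3$ in $GK(L)$ contains a prime from $R_z(L)\cup R_y(L)$ or $p$), I would argue that such $s_i$ have to be large primitive divisors with cyclic Sylow subgroups. A Frobenius-type action of an element of order $s_i$ on $K$ then forces $s_ir\in\omega(G)$ unless $K$ is centralized, which gives a contradiction. Hence $t(r,L)\le 2$, and since $r$ is not adjacent to at least one prime, $t(r,L)=2$. For (2), if $t(r,L)=4$ then $r\neq v$ is impossible by (1). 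If $r=v$, then $t(v,G)\le 3$ by Lemma \ref{l:v_ind}(2), which contradicts $t(r,L)=4$. Primes in $R_y(L)$ have $t=4$ by Lemma \ref{l:zy}(1), and $R_y(L)\cap\pi(\overline G/S)=\varnothing$ is already known, so an element of order $k_y(L)$ maps injectively into $S$.

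For (3), fix $r\in\pi(K)\setminus\{v\}$, so $t(r,L)=2$ by (1). By Lemma \ref{l:zy}(5), $r$ is nonadjacent to some prime. Moreover, $r$ cannot be adjacent to both some $r_z$ and some $r_y$, since these have no common neighbours (Lemma \ref{l:zy}(4)). So $r$ is nonadjacent to $r_x(L)$ for some $x$. Take an element $g\in S$ of order $r_x(L)$ and consider its action on $K/\Phi(K)$. If $r_x(L)$ divided the order of a proper parabolic subgroup $P$ of $S$, then $g$ would lie in a parabolic subgroup whose unipotent radical $U$ is a $v$-group. The standard argument would then be applied to the semidirect product of $K$ with $U\langle g\rangle$. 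Since $g$ acts nontrivially on $U$, restricting to $U\langle g\rangle$, which is a Frobenius-like group, yields a fixed point of $g$ on $K$ and hence $r\,r_x(L)\in\omega(G)$. This forces $r$ into $\pi(m_x(L))$. I expect this step, establishing which $x$ works and proving uniqueness, to be the main obstacle. For uniqueness I would argue that if both $r_z(L)$ and $r_y(L)$ divide orders of proper parabolics, then $r$ is adjacent to both, contradicting Lemma \ref{l:zy}(4). Then $r\in\pi(m_x(L))\setminus R_x(L)$ with $t(r,L)=2$, and Lemma \ref{l:zy}(6) gives $r\in\pi(q-\eta_x)$.

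For (4), let $R$ be the preimage of a Sylow $r$-subgroup of $P$ modulo an appropriate section, and reduce to $K$ an elementary abelian $r$-group that is a faithful $\overline G$-module. The condition $(r,u(u^2-1))=1$ means $r$ does not divide the orders of the Levi tori of rank one. I would take an element $h$ of order $r^k$ in a Levi complement of $P$ normalizing a nontrivial $v$-subgroup $U_0$ of the unipotent radical. Applying the Hall--Higman-type lemma to $K\rtimes (U_0\langle h\rangle)$, where $h$ acts fixed-point-freely on $U_0$ because $r\nmid u^2-1$, shows that $h$ has a minimal polynomial of full degree $r^k$ on $K$. This produces an element of order $r^{k+1}$ in $K\langle h\rangle$, so $r^{k+1}\in\omega(G)$.
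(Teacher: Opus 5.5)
\textbf{There is a genuine gap in (1) and (3), and a second one in (4).} Your part (2), the uniqueness of $x$ in (3), and the final step of (3) are fine and agree with the paper.

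\textbf{Parts (1) and (3).} Both lack the same ingredient. In (1), the reduction to two primes $s_1,s_2\in\pi(S)\setminus(\pi(K)\cup\{v\})$ with cyclic Sylow subgroups is correct. The next step, that ``a Frobenius-type action of an element of order $s_i$ on $K$ forces $s_ir\in\omega(G)$'', has no basis. An element of prime order can act fixed-point-freely on an $r$-group, and for primes such as $r_n(u)$ in $L_n(u)$, which divide the order of no proper parabolic subgroup, nothing forces a fixed point. A fixed point is forced when $g$ of order $s$ lies in a proper parabolic subgroup. Then $g$ acts fixed-point-freely on a $v$-subgroup $U_0$ of the unipotent radical, and the Frobenius group $U_0\langle g\rangle$ acts on $K$. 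This is \cite[Lemma 2.16]{20YanGrVas}, which the paper uses.

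What you are missing is a fact about $S$ itself: two nonadjacent primes of $\pi(S)\setminus\{v\}$ cannot both fail to divide the order of a proper parabolic subgroup. The only exception is $S=U_3(u)$ with $(u+1)_3=3$ and $3$ one of the primes \cite[Lemma 1.11]{20GrZv.t}. With this fact, one of $s_1,s_2$ is adjacent to $r$. The exceptional case is handled separately: $t(3,L)=2$ if $p\neq 3$, and if $p=3$ one chooses another coclique through $r$.

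Your substitute claim, that every coclique of size $3$ in $GK(L)$ meets $R_z(L)\cup R_y(L)\cup\{p\}$, is false anyway. For example, $\{r_4(q),r_5(q),r_6(q)\}$ is a coclique in $GK(L_8(q))$.

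The same fact is exactly what you leave open as the ``main obstacle'' in (3). The primes $r_z$ and $r_y$ are nonadjacent primes of $\pi(S)$ larger than $3$, so one of them divides the order of a proper parabolic subgroup; if it equals $v$, Lemma~\ref{l:v_ind}(1) applies directly. That gives the required $x$.

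\textbf{Part (4).} Here you take a different route from the paper, which simply quotes the Di Martino--Zalesski theorem on minimal polynomials \cite[Theorem 1.1]{01DiMZal}. Your Hall--Higman argument works whenever there is a Frobenius subgroup $U_0\langle h\rangle$ with $U_0$ a $v$-group acting nontrivially on the relevant section of $K$. Such a subgroup need not exist, and the hypothesis $r\nmid u(u^2-1)$ does not supply one.

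For example, take $S=S_{2n}(u)$ with $u$ odd and $n\geq 3$, and $h$ of order $r\in R_{2n-2}(u)$. The only proper parabolic subgroups containing $h$ are point stabilizers. Their unipotent radical $U$ is a Heisenberg group whose centre $Z$ is centralized by $h$, while $h$ acts irreducibly on $U/Z$. An $h$-invariant $U_0\leq U$ with $C_{U_0}(h)=1$ would meet $Z$ trivially and map onto $U/Z$. It would then be an abelian complement to $Z$, contradicting the nondegeneracy of the commutator pairing. In such cases the representation-theoretic input of \cite{01DiMZal} is genuinely needed.
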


\begin{proof}
(1) Assume that $\{r,s,t\}$ is a coclique of size 3 in $GK(G)$. Then $s,t\in\pi(S)$ by Lemma \ref{l:str}(2) and  $v\not\in\{s,t\}$ by Lemma \ref{l:v_ind}(1). If one of $s$ and $t$, say $s$, divides the order of a proper parabolic subgroup of $S$, then $rs\in\omega(G)$ by \cite[Lemma 2.16]{20YanGrVas}. If neither of $s$ and $t$ divides the order of a proper parabolic subgroup, then $S=U_3(u)$, $(u+1)_3=3$ and $3\in\{s,t\}$ by \cite[Lemma 1.11]{20GrZv.t}. If $p\neq 3$, then $3$ divides $q+1$ or $q-1$ and then $t(3,L)=2$ by the structure of $GK(L)$. If $p=3$, then $r\in R_y(L)$, so we can replace $s$ and $t$ by $r_z(L)$ and $r_i(\varepsilon q)$, where  $i=4$ or~$5$, and then $3\not\in\{s,t\}$.

(2) The first claim follows from (1) and Lemma \ref{l:v_ind}. The second hold since $t(r,L)=4$ and $r\not\in\pi(\overline G/S)$ for every $r\in R_y(L)$.

(3) Let $r_z\in R_z(L)$ and $r_y\in R_y(L)$. Arguing as in the proof of (1) and noting that $r_z, r_y>3$, we conclude that there is  $x\in\{z,y\}$ such that $r_x$ divides the order of a proper parabolic subgroup of $S$ and so $r_xr\in \omega(L)$ for every $r\in\pi(K)\setminus\{v\}$. It follows that $r\in \pi(m_x(L))\setminus R_x(L)$ and $t(r,L)=2$ by (1). Now we apply Lemma \ref{l:zy}(5). Since $r$ cannot be adjacent to both $r_z$ and $r_y$, the number  $x$ is uniquely determined. 

(4) This follows from \cite[Theorem 1.1]{01DiMZal}  (cf. \cite[Lemma 3.5]{15Vas}).
\end{proof}

Using that $k_z(L), k_y(L)\in \omega(S)$ and applying Lemmas \ref{l:kzky} and \ref{l:meo}, we have \begin{equation}\label{e:kum} q^4/2<\max\{k_z(L), k_y(L)\}\leq \meo(S)\leq 2u^m,\end{equation} where $m$ is the untwisted Lie rank of $S$.

\begin{lemma} \label{l:large}

If $t(r,L)=4$ and $r\in\pi(L)\setminus \pi(\overline G/S)$, then
$r\in\pi(S)$ and $t(r,S)\geq 4$. In particular, $t(S)\geq 4$ and $S\neq L_3^\tau(u)$.
 
\end{lemma}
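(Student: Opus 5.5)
Since $t(r,L)=4$, there is a coclique $\rho=\{r,s_1,s_2,s_3\}$ of size $4$ in $GK(L)=GK(G)$ containing $r$. The plan is to show that every prime of $\rho$ lies in $\pi(S)$ and that $\rho$ remains a coclique in $GK(S)$. Non-adjacency passes down: if two primes of $\rho$ were adjacent in $GK(S)$, then, since $S$ is a section of $G$, they would be adjacent in $GK(G)$. So the whole task is to show $\rho\subseteq\pi(S)$. Then $t(r,S)\geq 4$, hence $t(S)\geq 4$. Finally, $t(L_3^\tau(u))\le 3$ by \cite{11VasVd.t}, which rules out $S=L_3^\tau(u)$.

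To show $\rho\subseteq\pi(S)$, I look at where each prime can live. By Lemma \ref{l:K}(2), no prime $s$ with $t(s,L)=4$ divides $|K|$. Every prime of $\rho$ lies in a coclique of size $4$, namely $\rho$ itself, so $t(s,L)=4$ for each $s\in\rho$. Hence $\rho\cap\pi(K)=\varnothing$. It remains to exclude primes of $\rho$ dividing $|\overline G/S|$. By hypothesis $r\notin\pi(\overline G/S)$. By Lemma \ref{l:str}(2), at most one prime of $\rho$ divides $|K|\cdot|\overline G/S|$. Suppose some $s\in\rho$ divides $|\overline G/S|$; necessarily $s\neq r$. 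If $s\neq v$, Lemma \ref{l:v_ind}(1) gives $sv\in\omega(G)$.

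I expect this last point to be the main obstacle, and the cleanest route is to choose $\rho$ well from the start rather than work with an arbitrary coclique. My first attempt is to use the explicit cocliques of $GK(L)$ in \cite{05VasVd.t,11VasVd.t} together with Lemma \ref{l:zy}(1). For each $r$ with $t(r,L)=4$, I would exhibit a $4$-coclique containing $r$ whose other members lie in $R_z(L)\cup R_y(L)$ or are primes $r_i(\varepsilon q)$ with $i\ge 4$. The members in $R_z(L)\cup R_y(L)$ avoid $\pi(\overline G/S)$ by Lemma \ref{l:str}(3) and \cite[Lemma 3.5]{20GrZv.t}.

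For a remaining member $r_i(\varepsilon q)$, I would argue as follows. Suppose it divides $|\overline G/S|$. Then it is the order of a field automorphism of $S$, or divides the order of a diagonal automorphism. In the field case it divides $\alpha$, so $u\geq v^{r_i}$. Since $r_i(\varepsilon q)\geq 5$ is large relative to $q$, this contradicts the bound $\meo(S)<2u^m$ in (\ref{e:kum}) combined with $|S|$ being a divisor of a product of numbers bounded by $\exp(L)$. Alternatively, the centralizer of a field automorphism of prime order $s$ contains a subgroup of the same type over $u^{1/s}$, which gives $s$ many neighbours in $GK(G)$ and contradicts $t(s,L)=4$. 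In the diagonal case it divides $(m+1,u-\tau)$, so it is adjacent to $v$ and to all primes dividing a maximal torus of $S$, again contradicting its small neighbourhood.

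If this selection can always be made, then Lemma \ref{l:str}(2) is not even needed beyond $r$. Every prime of $\rho$ avoids both $\pi(K)$ and $\pi(\overline G/S)$, so $\rho\subseteq\pi(S)$, and the argument above completes the proof.
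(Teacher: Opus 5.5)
\textbf{There is a genuine gap.} The reduction to $\rho\subseteq\pi(S)$ is fine: non-adjacency passes to the section $S$, and Lemma \ref{l:K}(2) keeps every prime of a $4$-coclique out of $\pi(K)$. But the step you flag as the obstacle is never resolved, and the route you propose cannot work as stated. You try to show that a \emph{specific} prime $s=r_i(\varepsilon q)$ of $\rho$ does not lie in $\pi(\overline G/S)$, and none of your three arguments does this.

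\begin{itemize}
\item \emph{Field automorphisms.} You need $s$ to be large relative to $q$, but a single primitive divisor can be as small as $i+1$. For example, $5\in R_4(q)$ whenever $q\equiv\pm2\pmod 5$, and $7\in R_3(q)$ whenever $q\equiv 2,4\pmod 7$. Then $u\geq v^{5}$ is perfectly compatible with $u+1\leq\meo(S)\leq\meo(L)<2q^7$.
\item \emph{The centralizer variant.} It only shows that $s$ has many neighbours. That does not contradict $t(s,L)=4$, which concerns cocliques, not neighbourhood size.
\item \emph{Diagonal automorphisms.} Here $s\mid(m+1,u-\tau)$ already gives $s\in\pi(S)$, so there is nothing to exclude. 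Your target ``$s\notin\pi(\overline G/S)$'' is stronger than needed, and the paper never proves it; compare Lemma \ref{l:aut}, where primes of $R_4(q)$ are allowed in $\pi(\overline G/S)$.
\item \emph{Choice of $\rho$.} The selection you want is not always available. For $L=O_{10}^+(q)$ and $r\in R_4(q)\cup R_6(q)$, every $4$-coclique through $r$ contains a prime of $R_3(q)$.
\end{itemize}

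\textbf{The missing idea} is to work with the whole set $R_i(\varepsilon q)$ and the number $k_i(\varepsilon q)$, rather than with one prime.
\begin{enumerate}
\item Take $\rho=\{r\}\cup\{r_i(\varepsilon q)\mid i\in I\}$ with $|I|=3$ and $1,2\notin I$. Any choice of the $r_i(\varepsilon q)$ gives a coclique, so it suffices that $R_i(\varepsilon q)\cap\pi(S)\neq\varnothing$ for each $i\in I$.
\item Suppose this fails for some $i$. Since also $R_i(\varepsilon q)\cap\pi(K)=\varnothing$, an element of order $k_i(\varepsilon q)$ in $G$ maps to an element of the same order in $\overline G/S$.
\item Its prime divisors are at least $5$ and do not divide $|S|$. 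Hence they come neither from diagonal automorphisms nor from graph automorphisms, and so $k_i(\varepsilon q)\mid\alpha$.
\item Since $k_i(\varepsilon q)>q^2/4$, this gives $u\geq 2^{q^2/4}$, contradicting $u+1\leq\meo(L)<2q^7$ unless $q\in\{5,7,9\}$.
\item In those cases, $v^{k_i(\varepsilon q)}-1$ divides $u-1\in\omega(S)$, so it lies in $\omega(S)$, but it is not in $\omega(L)$.
\end{enumerate}

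\textbf{Your last step is also false as stated.} $t(L_3^\tau(u))=4$ does occur: for instance, $\{3,7,13,61\}$ is a coclique in $GK(L_3(13))$, and the paper's introduction lists $L_3^\pm(q)$ among the groups with $t=4$. What excludes $L_3^\tau(u)$ is that every $4$-coclique of $GK(L_3^\tau(u))$ contains $3$. The coclique you obtain for $r\in R_z(L)$ consists of primes larger than $3$.
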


\begin{proof}
According to the description of cocliques of maximal size in $GK(L)$ in \cite[Tables 2 and 3]{11VasVd.t}, there is a set $I\subseteq \mathbb N$ such that $|I|=3$, $1,2\not\in I$ and $\rho=\{r\}\cup\{ r_i(\varepsilon q)\mid i\in I\}$ is a coclique in $GK(L)$ (cf. Table \ref{tab:t4} in Section \ref{s:t4}). By Lemma \ref{l:K}(2), it follows that $\rho\subseteq \pi(\overline G)$ and so $r\in\pi(S)$. If $R_i(\varepsilon q)\cap\pi(S)\neq \varnothing$ for every $i\in I$, then $t(r,S)\geq 4$.

Assume that $R_i(\varepsilon q)\cap\pi(S)=\varnothing$ for some $i\in I$. Then  
$k_i(\varepsilon q)\in \omega(\overline G/S)$ and using the condition $i\neq 1,2$, we conclude that $k_i(\varepsilon q)$ divides $\alpha$. Also $k_i(\varepsilon q)>q^2/4$ for $q\geq 5$. Hence $u\geq v^{k_i(\varepsilon q)}\geq 2^{q^2/4}$. On the other hand, observing that 
$S$ has an element of order $u+1$ and applying Lemma \ref{l:meo}, we see that 
$u+1\leq \meo(S) \leq \meo(L)\leq 2q^7$.  Thus $2^{q^2/4}<2q^7$, whence $q=5,7$, or $9$. Solving the equation $v^{k_i(\varepsilon q)}<2q^7$, we have either $q=5$, $k_i(\varepsilon q)=7,13$ and $v=2,3$, or $q=7$, $k_i(\varepsilon q)=19$ and $v=2$. Since $u-1\in\omega(S)$ and $a=v^{k_i(\varepsilon q)}-1$ divides $u-1$, it follows that $a\in\omega(S)$. It is easy to check that $e(a, q)\geq 42$, which shows that $a\not\in \omega(L)$, a contradiction.

Every $r\in R_z(L)$ satisfies the hypothesis of the first claim, so $t(S)\geq 4$. Furthermore, by the above,  $GK(S)$ includes a coclique of size 4 consisting of primes larger than $3$, and so $S\neq L_3^\tau(u)$ by \cite[Table 2]{11VasVd.t}.
\end{proof}

\begin{lemma}\label{l:adj}
Let $r, s\in\pi(S)\setminus \pi(v(u\pm 1))$, $r,s\not\in\pi(K)$ and $5<r<s$. If $rs\not\in\omega(S)$ and $rs\in\omega(L)$, then $r=7$ and the following hold: 
\begin{enumerate}
\item either $L=L_8^\varepsilon(q)$, $q\equiv \varepsilon\pmod 7$ and $s\not\in R_7(\varepsilon q)$, or  $p=7$, $L\neq O_{10}^\varepsilon(q)$;
\item $7$ divides $|\overline G/S|$.
 
\end{enumerate}
\end{lemma}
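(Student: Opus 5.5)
Since $rs\in\omega(L)=\omega(G)$, I pick $g\in G$ of order $rs$ and look at its image $\bar g$ in $\overline G=G/K$. Because $r,s\notin\pi(K)$, the order of $\bar g$ is still $rs$, so $rs\in\omega(\overline G)$. By hypothesis $rs\notin\omega(S)$, so $\bar g\notin S$. Hence some prime of $\{r,s\}$ divides $|\overline G/S|$, namely the order of the image of $\bar g$ in $\overline G/S$.

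First I use the structure of $\Aut S$. Since $r,s>5$ and $r,s\nmid v(u\pm1)$, neither prime divides the order of a diagonal or graph automorphism (these involve only $2$, $3$ and divisors of $u\pm1$). So the image of $\bar g$ in $\overline G/S$ has order $r$, $s$ or $rs$ and is induced by field automorphisms. Suppose, say, that $r$ divides the order of this image. Then, replacing $\bar g$ by a suitable power, some $x\in\overline G$ of order $rs$ has the form $x=\varphi h$, where $\varphi$ is a field automorphism of order $r$ and $x^r$ is an element of order $s$ in $S$ centralized by $x$. By Lang--Steinberg type arguments, $x^r$ then lies (up to conjugacy) in $C_S(\varphi)$, a group of the same type over the subfield of order $u^{1/r}$. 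Because $s$ is a primitive-type prime for $u$, not dividing $u\pm1$, the prime $s$ must then divide the order of the subfield group. This forces $s$ to have $e(s,u)$ coprime to $r$ in a suitable sense. In that situation I expect $S$ itself to contain an element of order $rs$: an element of order $r$ in the torus containing the $s$-element, or a field-automorphism-free toral element, since $r$ divides $u^{e}-1$ for the relevant $e$. That contradicts $rs\notin\omega(S)$ unless the orders are pushed so that only the outer automorphism supplies $r$. I will make this precise by arguing that an element of order $r\cdot\exp_s$ type exists in $\overline G$ but not in $S$.

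The cleaner route, which I prefer, goes through Lemma~\ref{l:exp}(3). Since $r,s\ge7$, $r<s$ and $rs\in\omega(L)$, that lemma says that either $\exp_r(L)\exp_s(L)\in\omega(L)$, or $r=7$ with exactly the alternatives in (1). So I only need to rule out $\exp_r(L)\exp_s(L)\in\omega(L)$. The key observation is that the field-automorphism part contributes only a factor $r$ (or $s$) of small multiplicity, while $S$ contains the full $r$-part and $s$-part of the relevant tori. Concretely, if $r$ divides $|\overline G/S|$, then $r$ divides $\alpha$. By Lemma~\ref{l:r-part}, $(u^{e}-1)_r$ is then divisible by $r\cdot(u^{e/r}-1)_r$, and I will show that $\exp_r(G)$ is strictly larger than the $r$-part achievable together with $s$ via outer elements. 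This produces an element of order $\exp_r(L)\exp_s(L)$ that must lie in $S$, hence $rs\in\omega(S)$, a contradiction. Thus $\exp_r(L)\exp_s(L)\notin\omega(L)$, Lemma~\ref{l:exp}(3) gives $r=7$ and (1), and $r=7$ is the prime dividing $|\overline G/S|$ (the case where it is $s$ is symmetric and excluded the same way), giving (2).

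The main obstacle is this middle step: showing that an element of order $rs$ arising from a field automorphism forces $rs\in\omega(S)$ whenever the exponents multiply in $\omega(L)$. I will try to handle it by comparing $r$-parts. The $r$-elements of $S$ centralizing a Sylow-type $s$-element have order up to $(u^{e}-1)_r$. An outer element $\varphi h$ of order $r^k$ centralizing an $s$-element has order at most $r(u^{e/r}-1)_r$, which is strictly less than $(u^{e}-1)_r$ when $r\mid e$. Together with the equality $\exp_r(G)=\exp_r(L)$, this should give the contradiction.
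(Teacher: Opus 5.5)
\textbf{There are two genuine gaps.} Your reduction via Lemma~\ref{l:exp}(3) to showing $\exp_r(L)\exp_s(L)\notin\omega(L)$ is the same as the paper's. However, the step that rules this out is not proved, and the mechanism you propose cannot work. (The Lang--Steinberg paragraph only states expectations.)

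\textbf{The main step.} You want to bound the $r$-part of an outer element $\varphi h$ centralizing an $s$-element by $r(u^{e/r}-1)_r$ and show this is strictly below $(u^e-1)_r$. Either reading of this fails:
\begin{enumerate}
\item If $e=e(r,u)$, then $e$ divides $r-1$, so your condition $r\mid e$ never holds.
\item If you mean the subfield of order $u_0=u^{1/r}$, then $u_0^e\equiv u^e\equiv 1\pmod r$, and Lemma~\ref{l:r-part} gives $r(u_0^e-1)_r=(u^e-1)_r$. This is an equality, not a strict inequality.
\end{enumerate}
In fact $(\varphi h)^r$ can be (up to conjugacy) a suitable $r$-element of the subfield group. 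So elements of $\varphi S$ can have $r$-part $(u^e-1)_r$, which may equal $\exp_r(S)$. Comparing orders of outer $r$-elements with $\exp_r$ cannot by itself give a contradiction.

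What is missing is a second use of $rs\notin\omega(S)$, which is how the paper argues:
\begin{enumerate}
\item Take $g\in\overline G$ of order $\exp_r(L)\exp_s(L)$; it exists since $r,s\notin\pi(K)$. Write $g=g_rg_s$ with $g_r$, $g_s$ its $r$- and $s$-parts.
\item If both $g_r$ and $g_s$ had nontrivial powers in $S$, these commuting powers would give an element of order $rs$ in $S$.
\item Hence for some $t\in\{r,s\}$, $g_t$ maps to an element of order $\exp_t(L)$ in $\overline G/S$.
\item Since $t>5$ and $t\nmid u\pm1$, this forces $\exp_t(L)$ to divide $\alpha$.
\item But $(\alpha)_t<\exp_t(S)\le\exp_t(L)$. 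The paper takes this from \cite[Lemma 5.6]{20GrZv.t}; it also follows from Lemma~\ref{l:r-part}, since $(\alpha)_t=t^a>1$ implies $(u^{e(t,u)}-1)_t\ge t^{a+1}$.
\end{enumerate}
Your sentence ``this produces an element of order $\exp_r(L)\exp_s(L)$ that must lie in $S$'' also overclaims. What follows is only that $S$ contains an element of order $rs$, obtained from nontrivial powers.

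\textbf{Part (2).} Your first paragraph only shows that $r$ or $s$ divides $|\overline G/S|$. The case where $s$ divides it but $7$ does not is not ``symmetric'' to anything you proved, because Lemma~\ref{l:exp}(3) treats $r=7$ and $s$ differently. The paper uses the remaining conclusion of that lemma, namely $7\exp_s(L)\in\omega(L)=\omega(G)$. Suppose $7\nmid|\overline G/S|$, and take an element of $\overline G$ of order $7\exp_s(L)$. Its $7$-part lies in $S$. Since $7s\notin\omega(S)$, its $s$-part has no nontrivial power in $S$, so it maps to an element of order $\exp_s(L)$ in $\overline G/S$. This is impossible by the same inequality $(\alpha)_s<\exp_s(L)$.
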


\begin{proof} 
Suppose that $\exp_r(L)\cdot\exp_s(L)\in\omega(L)$. Then $\exp_r(L)\cdot\exp_s(L)\in\omega(\overline G)$ and since  $rs\not\in\omega(S)$, one of  $\exp_r(L)$ and $\exp_s(L)$ lies in $\omega(\overline G/S)$. Denote this number by $a$. As $r,s>5$ and $r,s$ do not divide $u\pm 1$, it follows that $a$ divides $\alpha$. However $(\alpha)_t<\exp_t(S)\leq\exp_t(L)$ for every $t\in\pi(S)\setminus \pi(v(u\pm 1))$ by \cite[Lemma 5.6]{20GrZv.t}, a contradiction.

Thus $\exp_r(L)\cdot\exp_s(L)\not\in\omega(L)$. Lemma \ref{l:exp} implies that  $r=7$ and (1) holds. Also it follows that $7\exp_s(L)\in\omega(G)$. If  $7\not\in\pi(\overline G/S)$, then $\exp_s(L)\in \omega(\overline G/S)$ and we derive a contradiction as in the previous paragraph.
\end{proof}

\subsection{Case $t(S)\geq 5$} 

Suppose that $t(S)\geq 5$. By \cite[Tables 2 and 3]{11VasVd.t}, there is a set  $I\subseteq\mathbb N$ of size $t(S)$ such that the following hold: 

\begin{enumerate}
 \item $R_i(u)\neq \varnothing$ for every $i\in I$ and $\{r_i(u)\mid i\in I\}$ is a coclique in $GK(S)$;
 \item $I\cap\{1,2,4\}=\varnothing$, in particular, $r_i(u)\geq 7$ for every  $i\in I$;
 \item if $I$ contains $3$ or $6$, then $t(S)\leq 6$.
 \end{enumerate}
 
The groups $S$ with $t(S)=5,6$ will be considered separately and, for convenience, these groups are listed in Tables \ref{tab:t6} and \ref{tab:t5} together with the corresponding sets $I$. Also Table~\ref{tab:t5} gives a subset  $K(S)$ of $\omega(S)$ with the following property: if $a\in \omega(S)$ and every prime divisor of $a$ is not adjacent to $2$ in $GK(S)$, then $a$ divides some number in $K(S)$. The sets $K(S)$ are easily extracted from \cite[Tables 4 and 6]{05VasVd.t} and the description of the spectra of classical groups. Clearly,  $k_z(L)$ divides some number in $K(S)$.
In both tables we omit the braces when writing sets.

\begin{table}[ht]
\caption{The simple classical groups $S$ with $t(S)=6$}\label{tab:t6}
$\begin{array}{|l|l|}
\hline
S&I\\
\hline
L_{11}(u), u\neq 2 &6, 7,8,9, 10, 11\\
L_{12}(u)&7,8,9, 10, 11, 12\\

\hline 
U_{11}(u)&3, 5, 8, 14, 18, 22\\
U_{12}(u)&5, 8, 12, 14, 18, 22\\

\hline
S_{14}(u), O_{15}(u)&5, 7, 8 ,10, 12, 14\\
\hline
O^+_{14}(u)&3,5, 7, 8, 10, 12\\
O^+_{16}(u)&5, 7, 8, 10, 12, 14\\
\hline
O^-_{14}(u)&5, 6, 8, 10, 12, 14\\
\hline
\end{array}$
\end{table}

\begin{table}[ht]
\caption{The simple classical groups $S$ with $t(S)=5$}\label{tab:t5}
$\begin{array}{|l|l|l|}
\hline
S&I&K(S)\\
\hline
L_9(u), u\neq 2&5,6,7,8,9&k_9(u), k_8(u)\\
L_{10}(u), u\neq 2&6,7,8,9,10&k_{10}(u), k_9(u)\\
L_{11}(2)&7,8,9,10,11&23\cdot 89, 11\\
\hline 
U_9(u)&3,8,10,14,18&k_{18}(u), k_8(u)\\
U_{10}(u)&3,5,8,14,18&k_5(u), k_{18}(u)\\
\hline
S_{10}(u), O_{11}(u), u\neq 2&3,5,6,8,10 &k_5(u), k_{10}(u)\\
S_{12}(u), O_{13}(u)&3,5,8,10,12&k_{12}(u)\\
\hline
O^-_{12}(u)&3,5,8,10,12&k_{12}(u), k_5(u), k_{10}(u)\\
O_{14}^-(2)&5, 8,10,12,14& 43, 13\\
\hline
\end{array}$
\end{table} 
 
Let $J=\{i\in I\mid R_i(u)\setminus \pi(K)\neq \varnothing\}$. Recall that  $\pi(K)\setminus\{v\}\subseteq \pi(q-\eta)$ for some $\eta\in\{+1,-1\}$ by Lemma \ref{l:K}(3). 

\begin{lemma}  \label{l:I}
Let $t(S)\geq 5$ and let $I$ and $J$ be as above.
\begin{enumerate}
 \item If $r\in R_i(u)\cap\pi(K)$ for some $i\in I$, then $(k_i(u))_r$ divides $q-\eta$. In particular, $k_i(u)$ divides $q-\eta$ for every $i\in I\setminus J$.
 
 \item $|I\setminus J|\geq t(S)-5$.
 
 \item If either $R_i(u)\setminus\pi(K)\neq\{7\}$ for any $i\in J$, or $L$ and $G$ do not satisfy the conditions (1) and (2) in Lemma {\ref{l:adj}}, then $|I\setminus J|\geq t(S)-4$.
  
\end{enumerate}
\end{lemma}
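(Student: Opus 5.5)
For (1) I would use Lemma \ref{l:K}. Let $r\in R_i(u)\cap\pi(K)$ with $i\in I$. Since $i\notin\{1,2,4\}$ and $r\ne v$, the prime $r$ is coprime to $u(u^2-1)$, and Lemma \ref{l:K}(3) gives $r\in\pi(q-\eta)$. Because $t(S)\ge5$, the parabolic subgroups of $S$ are large, so (apart from the largest index of $I$ in some cases) $(k_i(u))_r$ divides the order of a proper parabolic subgroup of $S$. Lemma \ref{l:K}(4) then puts $r\cdot(k_i(u))_r$ into $\omega(G)=\omega(L)$. On the other side, $\exp_r(L)=(q-\eta)_r$ by Lemma \ref{l:exp}(2) when $r\ge7$, with a possible extra factor $7$ only in the case $L=L_8^\varepsilon(q)$, $r=7$. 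The strict inequality coming from Lemma \ref{l:K}(4) absorbs that extra factor, so $(k_i(u))_r\le (q-\eta)_r$. Hence $(k_i(u))_r$ divides $q-\eta$. If $i\in I\setminus J$, every prime of $k_i(u)$ lies in $\pi(K)$, and multiplying the $r$-parts gives $k_i(u)\mid q-\eta$. The step I would check most carefully is the claim that $(k_i(u))_r$ divides a proper parabolic order for every $i\in I$. If that fails for the top index, I would instead use that $r_i(u)$ is then a $t(r,L)=2$ prime lying in an $m_x(L)$, and bound its exponent via Lemma \ref{l:exp}.

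For (2), look at the primes $r_i(u)$ with $i\in J$, each chosen outside $\pi(K)$. These lie in $\pi(\overline G)$ and, since $i\notin\{1,2\}$, they are coprime to $|\overline G/S|$. By choice of $I$ they are pairwise nonadjacent in $GK(S)$.
\begin{itemize}
\item[] \emph{Adjacency transfers to $L$.} If two of them were adjacent in $GK(L)$, Lemma \ref{l:adj} would force the smaller one to be $7$ and $7$ to divide $|\overline G/S|$.
\end{itemize}
Suppose first that this does not happen. Then $\{r_i(u)\mid i\in J\}$ is a coclique in $GK(L)$, so $|J|\le t(L)=4$. If the exceptional situation does occur, only one prime, namely $7$, misbehaves. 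Removing it leaves a coclique of size $|J|-1$, so $|J|\le5$. This proves (2).

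For (3), the bound $|J|\le 4$ just obtained applies whenever the exceptional prime $7$ cannot occur. That happens when no $R_i(u)\setminus\pi(K)$ with $i\in J$ equals $\{7\}$, since we may then pick $r_i(u)\ne7$. It also happens when conditions (1) and (2) of Lemma \ref{l:adj} fail. In either case $|I\setminus J|\ge t(S)-4$.

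The main obstacle is (1), specifically the parabolic-divisibility claim and the $7$-part bookkeeping when $p=7$ or $q\equiv\varepsilon\pmod 7$.
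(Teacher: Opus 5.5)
\paragraph{Parts (2) and (3).} Your arguments here are the paper's. You choose $r_i^0\in R_i(u)\setminus\pi(K)$ for $i\in J$, apply Lemma~\ref{l:adj} to get a coclique in $GK(L)$, and in the exceptional case discard the single prime $7$. The paper proves (3) first and then discards the index $i$ with $R_i(u)\setminus\pi(K)=\{7\}$, which amounts to the same thing.

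One correction is needed. Your remark that the $r_i(u)$ with $i\notin\{1,2\}$ are coprime to $|\overline G/S|$ is false. Such a prime can divide the field-automorphism order $\alpha$: for example $7\in R_3(2^7)$ and $7\mid\alpha=7$. This is exactly the situation in Lemma~\ref{l:adj}(2) and in the proof of Proposition~\ref{p:t5}. You never use the remark, since Lemma~\ref{l:adj} only needs $r,s>5$ and $r,s\notin\pi(K)\cup\pi(v(u\pm1))$. But it contradicts the exceptional case you then treat, so delete it.

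\paragraph{Part (1).} Here you have the logic reversed relative to the paper, and that is where your loose end comes from. The paper's main step is the trivial bound. Since $k_i(u)\in\omega(S)\subseteq\omega(L)$, we have $(k_i(u))_r\le\exp_r(L)$. By Lemma~\ref{l:exp}(2), $\exp_r(L)=(q-\eta)_r$ unless $r=7$, $L=L_8^\varepsilon(q)$ and $7\mid q-\varepsilon$.

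The observation you are missing is this. If $7\in R_i(u)$ with $i\notin\{1,2\}$, then $i=e(7,u)\in\{3,6\}$, so $t(S)\le 6$ by property (3) of $I$. Hence Lemma~\ref{l:K}(4) is needed only for $i\in\{3,6\}$ and $S$ in Tables~\ref{tab:t6} and~\ref{tab:t5}. There, the divisibility of a proper parabolic order by $k_i(u)$ is checked by inspection.

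Your route needs more. You require that $(k_i(u))_r$ divides a proper parabolic order for every non-top $i\in I$, for all $S$ with $t(S)\ge5$. This includes $t(S)\ge 7$, where the paper gives no list of $I$. The claim is plausible, but your proposal does not prove it.

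Your fallback for the top index is also incomplete as stated. Bounding via Lemma~\ref{l:exp} gives only $(k_i(u))_r\mid\exp_r(L)$, and this exponent could be $7(q-\eta)_7$. To close the case you need exactly the fact that $r\ne 7$ there, because $i\notin\{3,6\}$.

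Once you add that fact, the trivial bound handles every $i\notin\{3,6\}$. The general parabolic claim then becomes unnecessary, and you recover the paper's proof.
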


\begin{proof} 
(1) Let $i\in I$ and $r\in R_i(u)\cap\pi(K)$. Then $r\geq 7$ and $r$ divides $q-\eta$. If  $\exp_r(L)=(q-\eta)_r$, then $(k_i(u))_r$ divides $q-\eta$ since  $k_i(u)\in\omega(S)$. 

Suppose that $\exp_r(L)>(q-\eta)_r$. Then $r=7$ and $exp_r(L)=r(q-\eta)_r$ by Lemma \ref{l:exp}(2). It follows that $i=3$ or $6$, and so $t(S)\leq 6$. Hence $S$ is one of the groups in Tables \ref{tab:t6} and \ref{tab:t5}. Expecting these tables, we see that $k_i(u)$ divides the order of a proper parabolic subgroup of $S$. Applying Lemma \ref{l:K}(4) yields $r(k_i(u))_r\in\omega(G)$. So $(k_i(u))_r$ divides $q-\eta$ in this case too. 

(3)  Let $r_i^0\in R_i(u)\setminus \pi(K)$ for $i\in J$. By Lemma \ref{l:adj}, it follows that $\{r_i^0\mid i\in J\}$ is a coclique in $GK(L)$. So $|J|\leq 4$, whence $|I\setminus J|\geq t(S)-4$. 

(2) By (3), we may assume that $R_i(u)\setminus\pi(K)=\{7\}$ for some $i\in J$. The rest of the proof is similar to the proof of (3) with $J'=J\setminus\{i\}$ in place of $J$.
\end{proof}

\begin{prop}\label{p:t7}
 $t(S)\leq 7$.
\end{prop}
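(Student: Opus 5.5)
We argue by contradiction: assume $t(S)\geq 8$. The point is that $I$ is then large, so by Lemma~\ref{l:I}(2) at least $t(S)-5\geq 3$ indices $i\in I$ lie in $I\setminus J$, and for each of them $k_i(u)$ divides $q-\eta$. We will play this off against the upper bound on $u$ coming from~\eqref{e:kum}.

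First we fix the set $I$. For $t(S)\geq 8$, property (3) of $I$ gives $3,6\notin I$, and (2) gives $1,2,4\notin I$. So every $i\in I$ satisfies $\varphi(i)\geq 4$, and all $i\in I$ are distinct. Next we take $I'\subseteq I\setminus J$ with $|I'|=t(S)-5\geq 3$. By Lemma~\ref{l:I}(1), each $k_i(u)$ with $i\in I'$ divides $q-\eta$. These numbers are pairwise coprime, since primitive divisors for different $i$ are distinct. Hence $\prod_{i\in I'}k_i(u)$ divides $q-\eta$, and so
$$\prod_{i\in I'}k_i(u)\leq q+1 .$$
Applying Lemma~\ref{l:ineq} with $\varepsilon=+$, $a=u$ and $|I'|\geq 3$, we obtain $q+1\geq u^{4|I'|}/24\geq u^{12}/24$.

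Now we compare this with the rank of $S$. For $t(S)\geq 8$, the tables in \cite{11VasVd.t} show that the untwisted Lie rank $m$ of $S$ is large. Roughly, $t(S)$ is about $m/2$ or $3m/4$ depending on type, so $m\geq 2t(S)-c$ for a small constant~$c$; in particular $m\geq 12$ or so. Then \eqref{e:kum} gives $q^4/2<2u^m$. At first sight this goes the wrong way, because a larger $m$ permits a smaller $u$, so we also need a lower bound on $u$ in terms of $q$ independent of $I'$. For that, we use the element of order $k_z(L)$ in $S$ differently: the whole group $S$ must fit inside $\omega(L)$. In particular $S$ contains elements of order roughly $u^{m}/m$, namely $k_m(u)$, $k_{m+1}(u)$ or a similar primitive part, and $\meo(S)\leq\meo(L)<2q^7$ by Lemma~\ref{l:meo}. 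This gives $u^{m-1}\lesssim q^7$.

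Combining $u^{12}\leq 24(q+1)$ with $u^{m-1}\leq 4q^7$ is not yet contradictory, so the key extra input is a more refined count. Let $r$ range over primes in $R_i(u)$ for $i\in J$; there are $|J|=t(S)-|I\setminus J|$ such indices. Then $\prod_{i\in I\setminus J}k_i(u)$ divides $q-\eta$, and the exponent becomes $4(t(S)-4)$ or $4(t(S)-5)$, which grows linearly in $t(S)$. Meanwhile $m$ is at most about $2t(S)+2$. Using $\varphi(i)\geq i/2$-type bounds, we can in fact take $k_i(u)>u^{\varphi(i)/2}$, and the $i\in I$ are mostly large (near $m$). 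So $\prod_{i\in I\setminus J}k_i(u)$ is at least $u^{\sum \varphi(i)/2}$ over the $t(S)-5$ smallest admissible $i$. This sum exceeds $7(m-1)/... $, and combined with $u^{m-1}\lesssim q^7$ we get $q+1<\prod k_i(u)$ once $t(S)\geq 8$, a contradiction.

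The main obstacle is making this last estimate uniform over all families ($L_n^\pm$, $S_{2n}$, $O_{2n+1}$, $O_{2n}^\pm$) with $t(S)\geq 8$. For each family I would take the explicit $I$ from \cite[Tables 2 and 3]{11VasVd.t}, whose indices lie in the top half of $[1,2m]$, and check that the sum of $\varphi(i)/2$ over the $t(S)-5$ smallest indices exceeds $7m/4$-ish. If the borderline case $t(S)=8$ fails this crude bound, I would handle it with Lemma~\ref{l:I}(3) to gain one more index, or by direct computation of the specific small-rank groups.
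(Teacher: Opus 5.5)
There is a genuine gap: the argument never closes, and the closing step you propose cannot work.

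Your opening is fine. For $t(S)\ge 8$ you correctly get $|I\setminus J|\ge t(S)-5\ge 3$, that $\prod_{i\in I\setminus J}k_i(u)$ divides $q-\eta$, and the bound from Lemma~\ref{l:ineq}. The trouble is the final combination. The inequality $\prod_{i\in I\setminus J}k_i(u)\le q+1$ and the bound $u^{m-1}\lesssim q^7$ obtained from $\meo(S)\le\meo(L)$ are \emph{both lower bounds on $q$} in terms of $u$. No refinement of exponents (sums of $\varphi(i)/2$, family-by-family checks) can make them contradict each other. You also say you need a lower bound on $u$ in terms of $q$, but what you then derive is an upper bound on $u$.

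A contradiction needs an upper bound on $q$ in terms of $u$. That is exactly \eqref{e:kum}, $q^4/2<2u^m$, which comes from $k_z(L),k_y(L)\in\omega(S)$ and which you set aside as going ``the wrong way.'' It goes the right way. Your worry that a large rank $m$ weakens it is resolved by two things, not by a new inequality. First, keep the full exponent $4|I\setminus J|$ instead of freezing it at $12$. Second, bound $m$ linearly in $t(S)$.

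Here is how the paper does it. Since $3,6\notin I$, no $R_i(u)$ with $i\in I$ contains $7$. So Lemma~\ref{l:I}(3) applies and gives $|I\setminus J|\ge t(S)-4$; your $t(S)-5$ from Lemma~\ref{l:I}(2) would also suffice. The $k_i(u)$ are odd, so Lemma~\ref{l:ineq} yields $u^{4(t(S)-4)}/24+1/2\le (q-\eta)_{2'}\le (q+1)/2$, that is, $u^{4(t(S)-4)}\le 12q$.

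By \cite[Tables 2 and 3]{11VasVd.t}, $t(S)\ge (m+1)/2$ for linear and unitary $S$ and $t(S)\ge (3m-2)/4$ otherwise. Hence $m\le 2t(S)-1$, and \eqref{e:kum} becomes $q^4<4u^{2t(S)-1}$. Combining the two bounds gives $u^{16(t(S)-4)}\le (12q)^4<12^4\cdot 4\,u^{2t(S)-1}$, i.e. $u^{14t(S)-63}<12^4\cdot 4$. This is impossible for $t(S)\ge 8$, since then $u^{14t(S)-63}\ge 2^{49}$.

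Your anticipated threshold ``$\sum\varphi(i)/2>7m/4$'' and the fear that $t(S)=8$ might be borderline both come from the wrong pairing. The real requirement is only that the exponent exceed roughly $m/4$, and it does so by a wide margin. No case analysis over families is needed.
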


\begin{proof}
Assume the opposite. Then $3,6\not\in I$ and applying Lemma \ref{l:I}(3), we conclude that $|I\setminus J|\geq t(S)-4\geq 3$. Now applying Lemma \ref{l:I}(1) and Lemma \ref{l:ineq} yields 
 
 $$\frac{u^{4(t(S)-4)}}{24}+\frac{1}{2}\leq \frac{u^{4(|I\setminus J|)}}{24}+\frac{1}{2}\leq\prod_{i\in I\setminus J}k_i(u)\leq (q-\eta)_{2'}\leq \frac{q+1}{2}.$$
 
Let $m$ be the untwisted Lie rank of $S$. By \cite[Table  2 and 3]{11VasVd.t}, we have $t(S)\geq (m+1)/2$ if  $S$ is linear or unitary, and $t(S)\geq (3m-2)/4$ if  $S$ is symplectic or orthogonal. In either case, $m\leq 2t(S)-1$.   Together with \eqref{e:kum}, this yields $q^4/2<2u^m\leq 2u^{2t(S)-1}.$

Thus $u^{16(t(S)-4)}\leq (12q)^4 < 12^4(4u^{2t(S)-1})$, whence $ 12^4\cdot 4>u^{14t(S)-63}\geq u^{35}$, a contradiction.
\end{proof}

\begin{prop}\label{p:t6}
 $t(S)\neq 6$.
\end{prop}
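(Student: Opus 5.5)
We argue by contradiction: suppose $t(S)=6$, so $S$ is one of the groups of Table \ref{tab:t6}, with the corresponding set $I$ of size $6$. By Lemma \ref{l:I}(2), $|I\setminus J|\geq 1$, and by Lemma \ref{l:I}(3), $|I\setminus J|\geq 2$ unless some $i\in J$ has $R_i(u)\setminus\pi(K)=\{7\}$ and $L,G$ satisfy (1) and (2) of Lemma \ref{l:adj}. So the proof splits into a generic case with $|I\setminus J|\geq 2$, handled by size estimates as in Proposition \ref{p:t7}, and an exceptional case with $|I\setminus J|=1$.

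\emph{Generic case.} Here Lemma \ref{l:I}(1) and Lemma \ref{l:ineq} with $|I\setminus J|\geq 2$ and $C=1/12$ give
\[
\frac{u^8}{12}+\frac12\leq \prod_{i\in I\setminus J}k_i(u)\leq (q-\eta)_{2'}\leq \frac{q+1}{2},
\]
which requires $\varphi(i)>2$ for all $i\in I\setminus J$. Only $U_{11}(u)$ and $O^+_{14}(u)$ contain $3$ in $I$, and only $L_{11}(u)$ and $O^-_{14}(u)$ contain $6$; in those cases we discard that index, or use the cruder bound $k_3(u),k_6(u)\geq (u^2-u+1)/3$. Thus $q\gtrsim u^8/6$. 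On the other hand, \eqref{e:kum} gives $q^4/2<2u^m$, where $m$ is the untwisted Lie rank of $S$. From Table \ref{tab:t6}, $m\leq 11$ for $L_{12}(u)$, $U_{12}(u)$, $L_{11}(u)$ and $U_{11}(u)$ (actually $m=11$ or $10$), $m=7$ for $S_{14}(u)$, $O_{15}(u)$ and $O^\pm_{14}(u)$, and $m=8$ for $O^+_{16}(u)$. Combining, $u^{32}/6^4<q^4<4u^{11}$, which is impossible. When an index $3$ or $6$ must be kept, the product bound weakens to roughly $u^{6}$, and we still get $u^{24}$ against $u^{11}$, which is again impossible.

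\emph{Exceptional case.} Now $|I\setminus J|=1$, $7\in\pi(\overline G/S)$, and either $L=L_8^\varepsilon(q)$ with $q\equiv\varepsilon\pmod 7$, or $p=7$ and $L\neq O^\varepsilon_{10}(q)$. Take $i_0\in I\setminus J$. Then $k_{i_0}(u)$ divides $q-\eta$, so $q>k_{i_0}(u)$, which exceeds $u^4$ by Lemma \ref{l:bounds}(1) for most indices and is at least about $u^2/3$ in the worst case. Since $7\mid|\overline G/S|$ and $7\nmid u\pm1$ in the relevant subcase, $\overline G$ contains a field automorphism of order $7$, so $u=u_0^7$. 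The plan is to play this off against $\meo$: from \eqref{e:kum} we get $q^4<4u^m$. We also want an upper bound on $u$ in terms of $q$, obtained from $u+1\in\omega(S)$, or better from an element of order about $u^{m}$ in $S$, which together with Lemma \ref{l:meo} for $L$ gives $u^{m}\lesssim 2q^{7}$ or similar. The key point is that $J$ has five indices, and the coclique $\{r_i^0\mid i\in J\setminus\{i_1\}\}$ (with $i_1$ the index whose remaining prime is $7$) has size $4$ in $GK(L)$. By Lemma \ref{l:zy}(5) and the structure of cocliques of size $t(L)=4$ in $GK(L)$, these primes must be $r_z(L)$, $r_y(L)$ and two others; from this we obtain divisibility relations $k_i(u)\mid m_z(L)$ or $m_y(L)$, which force $u^{\varphi(i)}$ to be comparable to $q^{\varphi(z)}$. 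Comparing with $q>k_{i_0}(u)$ then yields a contradiction by degree counting.

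I expect the exceptional $7$-case to be the main obstacle, since the crude product inequality does not apply and one must really use the arithmetic forced by $7\in\pi(\overline G/S)$ and the placement of the primes $r_i(u)$ in $\mu(L)$. I would first try to show it cannot occur at all: $R_{i_1}(u)\subseteq\{7\}\cup\pi(K)$ together with $R_{i_1}(u)\cap\pi(K)$ dividing $q-\eta$ should make $k_{i_1}(u)\leq 7(q+1)/2$. That gives a second index with $k_i(u)$ bounded by $O(q)$, and the generic estimate then goes through with an extra factor of $7$, which is still far from enough to offset $u^{32}$ against $u^{11}$.
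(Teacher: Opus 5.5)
Your generic case $|I\setminus J|\geq 2$ is the paper's argument, and you even flag the indices $3$ and $6$, which the paper passes over. The gap is the exceptional case $|I\setminus J|=1$. There you give only a sketch, and it does not close:
\begin{itemize}
\item Relations of the form $k_i(u)\mid m_z(L)$ or $m_y(L)$ give only $k_i(u)\leq m_x(L)<q^7$. That is a lower bound on $q$ of order $u^{\varphi(i)/7}$, which is perfectly compatible with $q^4<4u^m$, so no degree-counting contradiction comes out of them.
\item Your claim that every $4$-coclique of $GK(L)$ contains $r_z(L)$ and $r_y(L)$ is false for $L=L_8^\varepsilon(q)$. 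There $\{z,y\}=\{7,8\}$, yet $\{r_7(\varepsilon q),r_6(\varepsilon q),r_5(\varepsilon q),r_4(q)\}$ is a coclique.
\item The fallback bound $k_{i_1}(u)\leq 7(q+1)/2$ is unjustified. Only $(k_{i_1}(u))_{7'}$ is known to divide $q-\eta$, while $(k_{i_1}(u))_7$ is bounded only by $\exp_7(L)$. This equals $49$ if $p=7$, and $7(q-\varepsilon)_7$ if $L=L_8^\varepsilon(q)$. With $\eta=-\varepsilon$ the latter leaves $k_{i_1}(u)$ of order $q^2$.
\end{itemize}

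The missing observation is that your ``worst case'' $k_{i_0}(u)\approx u^2/3$ never occurs, so the order-$7$ automorphism plays no role. If $R_{i_1}(u)\setminus\pi(K)=\{7\}$, then $i_1=e(7,u)$ divides $6$. Since $I\cap\{1,2\}=\varnothing$, this gives $i_1\in\{3,6\}$. Each set $I$ in Table~\ref{tab:t6} contains at most one of $3$ and $6$, so the unique $i_0\in I\setminus J$ has $\varphi(i_0)\geq 4$.

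Moreover, $I\cap\{3,6\}\neq\varnothing$ forces $S\in\{L_{11}(u),U_{11}(u),O^{\pm}_{14}(u)\}$. Hence $m\leq 10$, and $q^4<4u^{10}$ by \eqref{e:kum}.

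Lemma~\ref{l:bounds} then gives $u^4/6+1/2\leq k_{i_0}(u)\leq (q-\eta)_{2'}\leq (q+1)/2$, unless $k_{i_0}(u)=k_{10}(4)=41$.
\begin{itemize}
\item In the main case, $u^{16}\leq (3q)^4<3^4\cdot 4u^{10}$, so $u=2$. Then $q\leq 7$, and $k_{i_0}(2)\geq 11$ cannot divide $q-\eta$.
\item In the remaining case, $82\mid q-\eta$, so $q\geq 81$, contradicting $q^4<4\cdot 4^{10}$.
\end{itemize}
This one-factor version of your generic estimate is exactly how the paper finishes.
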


\begin{proof}

Assume the opposite. Let, as above, $m$ be the untwisted Lie rank of $S$. It follows from Lemma \ref{l:I}(2) that $|I\setminus J|\geq 1$. 

Suppose that $|I\setminus J|\geq 2$. Then applying Lemma \ref{l:I}(1) and Lemma \ref{l:ineq} yields 
$$\frac{u^{8}}{12}+\frac{1}{2}\leq \prod_{i\in I\setminus J}k_i(u)\leq (q-\eta)_{2'}\leq \frac{q+1}{2}.$$
On the other hand, Table~\ref{tab:t6} shows that $m\leq 11$, and therefore  $q^4/2<2u^{11}$  by \eqref{e:kum}. Thus $u^{32}\leq (6q)^4<6^4\cdot 4u^{11}$, whence $u^{21}<6^4\cdot 4$, a contradiction. 

Let $|I\setminus J|=\{i\}$. By Lemma \ref{l:I}(3), this implies that $J$ contains $3$ or $6$. In particular, $i\not\in\{3,6\}$. Also it follows that  $m\leq 10$, and so
\begin{equation}\label{e:t6} q^4/2<2u^{10}.\end{equation}  Assume that $k_i(u)\neq k_{10}(4)$. Then by Lemma \ref{l:bounds} $$\frac{u^4}{6}+\frac{1}{2}\leq k_i(u)\leq  (q-\eta)_{2'}\leq \frac{q+1}{2}.$$  Thus $u^{16}\leq (3q)^4<3^4\cdot 4 u^{10}$, whence $u=2$. Now \eqref{e:t6} yields $q\leq 7$ but then $k_i(u)$ cannot divide $q-\eta$, a contradiction.

If $u=4$ and $i=10$, then $k_i(u)=41$ and so $q\geq 81$, which contradicts  \eqref{e:t6}.
\end{proof}

\begin{prop}\label{p:t5}
$t(S)\neq 5$. 
\end{prop}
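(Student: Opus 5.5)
Assume $t(S)=5$, so $S$ is one of the groups in Table~\ref{tab:t5}, with index set $I$, $|I|=5$, and $J$ as before. By Lemma~\ref{l:I}(2) we only know $|I\setminus J|\geq 0$, so the argument splits according to $|I\setminus J|$.

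\emph{Case $|I\setminus J|\geq 1$.} Pick $i\in I\setminus J$. By Lemma~\ref{l:I}(1), $k_i(u)$ divides $q-\eta$, hence $k_i(u)\leq (q+1)/2$. Outside small exceptions, Lemma~\ref{l:bounds} gives $k_i(u)\geq u^4/6+1/2$. By Table~\ref{tab:t5} the untwisted Lie rank of $S$ satisfies $m\leq 10$ (the only rank-$10$ cases are $L_{11}(2)$ and the small-$u$ cases), so \eqref{e:kum} gives $q^4/2<2u^{10}$. Combining these yields $u^{16}<3^4\cdot 4u^{10}$, hence $u\leq 2$ (apart from $u=3$ with small $m$). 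The remaining finitely many configurations ($u=2$, or $k_{10}(4)=41$ in $S_{10}(4)$, $O_{11}(4)$, $O_{12}^-(4)$, or $i\in\{3,6\}$ where $k_i(u)$ is small) I would eliminate exactly as in Proposition~\ref{p:t6}. In each, \eqref{e:kum} bounds $q$ from above, while $k_i(u)\mid q-\eta$ together with $q\geq 5$ odd forces $q$ to be too large, or forces a specific prime that is absent. For $i=3,6$ the bound on $k_i(u)$ is only about $u^2$, so the inequality becomes $u^8\lesssim u^{10}$, which is not contradictory. I would handle it through Lemma~\ref{l:I}(3): if $J$ contains an index $j$ with $R_j(u)\setminus\pi(K)=\{7\}$, then Lemma~\ref{l:adj} forces $7\mid|\overline G/S|$ and $7\mid\alpha$, so $u\geq 2^7$, and \eqref{e:kum} then gives a contradiction with the divisibility. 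Otherwise $|I\setminus J|\geq 1$ holds with some $i\notin\{3,6\}$, and we can choose that index instead.

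\emph{Case $I=J$.} Then we obtain five primes $r_i^0\in R_i(u)\setminus\pi(K)$, $i\in I$, and by Lemma~\ref{l:adj} these are pairwise nonadjacent in $GK(L)$ except possibly for one pair involving $7$, which requires $7\mid\alpha$. Since $t(L)=4$, exactly one such exceptional pair occurs. So some $R_j(u)\setminus\pi(K)=\{7\}$ with $j\in I$, and $7\mid\alpha$, so $u=u_0^7$. We also have either $p=7$, or $L=L_8^\varepsilon(q)$ with $q\equiv\varepsilon\pmod 7$. Since $e(7,u)$ divides $6$, the index $j$ must be $3$ or $6$, so $S$ is $L_9(u)$, $L_{10}(u)$, $U_9(u)$, $U_{10}(u)$, $S_{10}(u)$, $O_{11}(u)$, $S_{12}(u)$, $O_{13}(u)$ or $O^-_{12}(u)$. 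Then $k_j(u)$ is a power of $7$, and $k_3(u)$ or $k_6(u)$ being a power of $7$ with $u=u_0^7$ and $u>2^7$ is impossible: by Lemma~\ref{l:kiPhi} it would give $\Phi_3(u)=u^2+u+1\in\{7^a,3\cdot 7^a\}$, while $7\mid u^2+u+1$ forces $7$ to divide only once via Lemma~\ref{l:r-part}. This gives the contradiction.

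\emph{Final step and main obstacle.} The sets $K(S)$ are included in Table~\ref{tab:t5} for the last step. The number $k_z(L)$ must divide an element of $K(S)$, and $k_y(L)\in\omega(S)$ as well. I expect the hardest part to be the leftover situation where the inequalities are not sharp enough, namely $u$ small or $i\in\{3,6\}$. There I would use $K(S)$: both $k_z(L)$ and $k_y(L)$ are large ($\geq q^4/6+1$ by Lemma~\ref{l:kzky}), are coprime by Lemma~\ref{l:zy}(4), and are built from primes nonadjacent to $2$ in $GK(L)$. I would try to show they must divide two distinct entries of $K(S)$ such as $k_5(u)$ and $k_{10}(u)$, or $k_9(u)$ and $k_8(u)$. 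Equating sizes via Lemma~\ref{l:bounds} and the arithmetic Lemmas~\ref{l:k5}, \ref{l:igcd}, \ref{l:igcd1} and \ref{l:igcd2} (for instance $k_z(L)=k_5(u)$ together with $k_y(L)\mid k_{10}(u)$, where these gcd lemmas bound the cofactors $(k_5(a)-1,k_{10}(a)-1)$) should then produce the contradiction. Sorting these equalities by the shape of $L$ is the step I expect to be most delicate.
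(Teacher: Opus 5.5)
Your argument for an index $i\in I\setminus J$ with $i\notin\{3,6\}$ is fine. The treatment of $i\in\{3,6\}$, however, has a real gap. Lemma~\ref{l:I}(3) only gives $|I\setminus J|\geq t(S)-4=1$. It says nothing about \emph{which} index lies outside $J$, so ``otherwise $|I\setminus J|\geq 1$ holds with some $i\notin\{3,6\}$'' does not follow. Take $S=U_9(u)$ or $U_{10}(u)$ with $I\setminus J=\{3\}$, or $L_9(u)$, $L_{10}(u)$ with $I\setminus J=\{6\}$. Then $k_i(u)\mid q-\eta$ gives only $q\geq 2k_i(u)-1\approx 2u^2/3$. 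Meanwhile \eqref{e:kum} with untwisted rank $m\in\{8,9\}$ gives only $q<\sqrt2\,u^{m/4}$, and these bounds are compatible for all large $u$. Your detour through $7\mid\alpha$ does not help. First, $R_j(u)\setminus\pi(K)=\{7\}$ by itself does not trigger Lemma~\ref{l:adj}. Second, $u\geq 2^7$ makes \eqref{e:kum} weaker, not stronger.

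The missing idea is the one the paper starts from. The number $k_z(L)\in\omega(S)$ is built from primes nonadjacent to $2$, so it divides an element of $K(S)$. After discarding $L_{11}(2)$ and $O_{14}^-(2)$ using the actual values of $k_z(L)$, every element of $K(S)$ is at most $u^6+u^3+1$, so $q^4/6+1\leq u^6+u^3+1$. Combine this with $(u^2-u+1)/3\leq k_i(u)\leq (q+1)/2$, which holds for \emph{every} $i\in I$, including $3$ and $6$. This gives $(2u^2-2u-1)^4<3^4\cdot 6(u^6+u^3)$, hence $u\leq 7$ and $q\leq 29$, and a short check disposes of the case $I\neq J$ uniformly. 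Your last paragraph mentions $K(S)$, but it aims at matching $k_z(L)$ and $k_y(L)$ with specific entries via the gcd lemmas. That is neither needed nor carried out; only the size bound is used.

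The case $I=J$ is also wrong as written. From $R_j(u)\setminus\pi(K)=\{7\}$ you cannot infer that $k_j(u)$ is a power of $7$. The remaining primes of $R_j(u)$ may lie in $\pi(K)$, and then their part of $k_j(u)$ divides $q-\eta$ by Lemma~\ref{l:I}(1). Your arithmetic claim is false as well. Since $u=u_0^7$ and $e(7,u_0)=e(7,u)=j$, Lemma~\ref{l:r-part} gives $(k_j(u))_7=7(k_j(u_0))_7\geq 49$, not $7$.

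The paper instead uses $\exp_7(L)=\exp_7(\overline G)=\exp_7(S)=(k_j(u))_7$ and splits along the two alternatives of Lemma~\ref{l:adj}(1).
\begin{itemize}
\item If $p=7$, then $(k_j(u))_7=49$. The group $S$ has an element of order $b$ with $k_j(u)\mid b$ and $b/k_j(u)>u^2+1$. Every element of $\omega(L)$ divisible by $49$ divides $49(q\pm1)$, so $u^2<q$. The $K(S)$ bound then gives $u\leq 7$, contradicting $7\mid\alpha$.
\item If $L=L_8^\varepsilon(q)$ with $7\mid q-\varepsilon$, there are two subcases.
\begin{itemize}
\item When $\eta=\varepsilon$ or $(k_j(u))_{7'}=1$, we get $k_j(u)\mid 7(q-\varepsilon)$. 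With the $K(S)$ bound this leaves only $u=2^7$. Then $(k_3(2^7))_{7'}=337$ forces $14\cdot 337\mid q-\varepsilon$, which is too large.
\item When $\eta=-\varepsilon$, a prime $s\in R_7(\varepsilon q)$ cannot divide the order of a proper parabolic subgroup of $S$, since otherwise $\eta=\eta_7=\varepsilon$ by Lemma~\ref{l:K}(3). So $s\in R_i(u)$ for some $i\in I$, and $7s\in\omega(L)\setminus\omega(S)$, contradicting Lemma~\ref{l:adj}(1).
\end{itemize}
\end{itemize}
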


\begin{proof} 
Assume the opposite. Recall that $k_z(L)$ divides a number in the set $K(S)$ given in the last column of Table \ref{tab:t5}. Also $k_z(L)\geq q^4/6+1$ by Lemma \ref{l:kzky}. 

Since $k_z(L)\geq 2440$ if $q\geq 11$ and $k_z(L)\in \{313, 521, 11\cdot 71, 1181, 1201\}$ if $q=5,7,9$, it follows that $S$ is neither  $L_{11}(2)$ nor $O_{14}^-(2)$. In the remaining cases every element of $K(S)$ is at most  $u^6+u^3+1$, and therefore \begin{equation}\label{e:t51} q^4/6+1\leq k_z(L)\leq u^6+u^3+1.\end{equation}

Suppose that $R_i(u)\subseteq \pi(K)$ for some $i\in I$. Then $k_i(u)$ divides $q-\eta$ by Lemma \ref{l:I}(1). Hence $(u^2-u+1)/3\leq k_i(u)\leq (q-\eta)_{2'}\leq (q+1)/2$, which yields  \begin{equation}\label{e:t52} 2u^2-2u-1\leq 3q.\end{equation} 
It follows that  $(2u^2-2u-1)^4\leq (3q)^4<3^4\cdot 6(u^6+u^3),$  whence $u\leq 7$ and $q\leq 29$. Since $k_i(u)$ divides $q-\eta$, we conclude that
$q\geq 13$ and $k_i(u)\in\{7, 11,13\}$. Then $u=2$ and $q<5$, a contradiction. 

Now let $R_i(u)\not \subseteq \pi(K)$ for any $i\in I$. By Lemma \ref{l:I}(3),  
this is possible only if $R_j(u)\setminus\pi(K)=\{7\}$ for $j\in I\cap\{3,6\}$ and $L$ and $G$ satisfy the conditions (1) and (2) of Lemma \ref{l:adj}; in particular, $7$ divides $\alpha$. Observe that $\exp_7(L)=\exp_7(\overline G)=\exp_7(S)=(k_j(u))_7$, where the second equality holds by \cite[Lemma 3.10]{15Vas}.

Suppose that $p=7$ and $L\neq O_{10}^\pm(q)$. Then   $49=\exp_7(L)=(k_j(u))_7$. There is an element of order $u^6-1$ or $(u^3\pm 1)(u^2+1)/(2,u-1)$ in $S$. So there is $b$ in $\omega(S)$ such that $b$ is divisible by $k_j(u)$ and $b/k_j(u)>u^2+1$. On the other hand, if $b'\in\omega(L)$ is divisible by $49$, then $b'$ divides $49(q+1)$ or $49(q-1)$. Thus $u^2+1\leq q+1$. This inequality is stronger than \eqref{e:t52}, and so $u\leq 7$. This is a contradiction since $7$ divides $\alpha$.  

Suppose that $L=L_8^\varepsilon(q)$ and $7$ divides $q-\varepsilon$. Then  $7(q-\varepsilon)_7=\exp_7(L)=(k_j(u))_7$. Note that $(k_j(u))_{7'}$ divides $q-\eta$. 

Assume that $\eta=\varepsilon$ or $(k_j(u))_{7'}=1$. Then $k_j(u)$ divides $7(q-\varepsilon)$. So in place of \eqref{e:t52}, we have that  $2u^2-2u-19\leq 21q$, and therefore $$(2u^2-2u-19)^4\leq (21q)^4<21^4\cdot 6(u^6+u^3),$$
whence $u\leq 272$. It follows that $u=2^7$, $j=3$ and $(k_j(u))_{7'}=337$. Since $7(k_j(u))_{7'}$ divides $q-\varepsilon$, we see that $q\geq 14\cdot 337-1$. This contradicts \eqref{e:t51} because $u=2^7$.

Now let $R_j(u)\cap \pi(K)\neq\varnothing$ and $\eta=-\varepsilon$. Let  $s\in R_7(\varepsilon q)$. If $s$ divided the order of a proper parabolic subgroup of $S$, then $\eta$ would be equal to $\eta_7=\varepsilon$. Hence $s$ divides the order of an anisotropic torus of $S$, which implies that $s\in R_i(u)$ for $i\in I$ and so $7s\not\in\pi(S)$. Since $7s\in\omega(L)$, this contradicts  Lemma \ref{l:adj}.
\end{proof}

\section{Proof of Theorem \ref{t:main}: Case $t(S)=4$}\label{s:t4}

In this section, we complete the proof of Theorem \ref{t:main}. By Lemma  \ref{l:large} and Propositions \ref{p:t7}-\ref{p:t5}, we are left with the case when $t(S)=4$ and $S\neq L_3^\tau(u)$. Table \ref{tab:t4} lists all simple classical groups satisfying these conditions. For every $S$, except some groups over the field of order $2$, the table also includes the sets $\Theta(S)$ and $\Theta'(S)$ having the following property: the cocliques of $ GK(S)$ of size  $4$ are exactly the sets of the form  $\Theta(S)\cup\{r\}$, where $r\in \Theta'(S)$. In the definitions of $\Theta(S)$ and $\Theta'(S)$,  $r_i$ stands for $r_i(u)$. The set $K(S)$ in the last column has the same meaning as in Table \ref{tab:t5}. As in the previous tables, we omit braces in sets. 

\begin{table}[ht]
\caption{The simple classical groups $S\neq L_3^\tau(u)$ with $t(S)=4$}\label{tab:t4}
$\begin{array}{|l|l|l|l|}
\hline
S&\Theta(S)&\Theta'(S)&K(S)\\
\hline
L_7(u), u\neq 2&r_7,r_6,r_5,r_4&\varnothing&k_7(u), k_6(u)\\
L_8(u), u\neq 2&r_7,r_6,r_5&r_8,r_4&k_8(u), k_7(u)\\
L_9(2)&&&73, 17\\
L_{10}(2)&&&73, 11\\
\hline 
U_7(u)&r_{14},r_3,r_{10},r_4&\varnothing& k_{14}(u), k_3(u)\\
U_8(u)&r_{14},r_3, r_{10}&r_8, r_4&k_8(u), k_{14}(u)\\
\hline
S_{8}(u), O_9(u), u\neq 2&r_{8}, r_6, r_3,r_4 &\varnothing&k_8(u)\\
S_{10}(2)&&&31, 11\\
\hline
O^+_{10}(2)&&&31, 17\\
O^+_{10}(u), u\neq 2&r_{5}, r_8, r_3&r_6,r_4&k_8(u), k_5(u)\\
O^+_{12}(u)&r_{10}, r_{5}, r_8&r_3,r_6&k_5(u), k_{10}(u)\\
\hline
O^-_{8}(u), u\neq 2&r_{8}, r_6, r_3&r_4,v & k_8(u), k_6(u), k_3(u)\\
O^-_{10}(u), u\neq 2&r_{10}, r_8, r_6&r_3,r_4&k_8(u), k_{10}(u)\\
\hline
\end{array}$
\end{table}

\begin{lemma}\label{l:small}
We have $q\geq 7$ and $u\geq 3$, and if $S$ is one the groups  $L^\tau_8(u)$, $O_{10}^\tau(u)$, or $O_{12}^+(u)$, then $GK(S)$ is connected.  
\end{lemma}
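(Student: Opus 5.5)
We treat the three assertions separately, using that $L$ already has connected prime graph, so $q\geq 5$ and $q$ is odd.

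\emph{Step 1: $q\geq 7$.} We only need to exclude $q=5$. For $q=5$ the group $L$ is one of $L_8^\pm(5)$, $O_{10}^\pm(5)$, $O_{12}^+(5)$. For these we compute $k_z(L)$ and $k_y(L)$ explicitly, e.g.\ $k_8(5)=313$, $k_7(5)$, $k_7(-5)$, $k_5(\pm5)$, $k_{10}(5)$. Both numbers lie in $\omega(S)$ by Lemma~\ref{l:K}(2) and the remark preceding it. Inequality \eqref{e:kum} bounds $u^m$ from below, and Lemma~\ref{l:meo} with $\meo(S)\leq\meo(L)<2q^7$ bounds it from above. This leaves finitely many pairs $(S,u)$ from Table~\ref{tab:t4}.

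For each such pair, $k_z(L)$ must divide an element of $K(S)$, and $K(S)$ consists of numbers $k_i(u)$. So the prime $r_z(L)$ must be a primitive divisor $r_i(u)$ with $i\in\{3,\dots,14\}$. Equivalently $u$ has order $i$ modulo the specific prime $r_z(L)$, and $k_i(u)$ is a multiple of it not exceeding roughly $2q^7$. Checking the few candidates $u$ should give a contradiction, either from divisibility or because some other element of $\omega(S)$, such as $u\pm1$ or $k_j(u)$, does not lie in $\omega(L)$. This finite check is where I expect the most work. I would organise it by the value of $r_z(L)$ and the smallest $u$ with $e(r_z(L),u)\in I$.

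\emph{Step 2: $u\geq 3$.} Suppose $u=2$. By \eqref{e:kum}, $q^4/2<2\cdot 2^m$, where the rank $m$ is at most $9$ in Table~\ref{tab:t4}. This gives $q^4<2^{11}$, so $q\leq 6$, which contradicts Step 1. The groups over the field of order $2$ in the table, such as $L_{10}(2)$, are handled the same way: in each case $\max K(S)\leq 73$, while $k_z(L)\geq q^4/6+1>400$ for $q\geq 7$. This is a contradiction, since $k_z(L)$ divides an element of $K(S)$.

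\emph{Step 3: connectivity.} Suppose $S\in\{L_8^\tau(u),O_{10}^\tau(u),O_{12}^+(u)\}$ and $GK(S)$ is disconnected. By \cite{81Wil}, this happens only for particular $u$, for instance $u\pm1$ a power of $2$, as happened for $L$. Then $S$ has at least two components, and $\omega(S)$ contains an isolated-component order such as $k_7(\tau u)$ whose primes are adjacent to nothing in $S$. A prime $r$ of that component lies in $\pi(G)$ and is not in $\pi(K)\cup\pi(\overline G/S)$, using Lemma~\ref{l:str} and Lemma~\ref{l:K}. Since $GK(L)$ is connected, $r$ is adjacent in $GK(G)$ to some prime $s$.

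The product $rs$ then cannot be realised inside $S$, so it must come from $K$ or from outer automorphisms. Lemma~\ref{l:K}(1),(3) confines such $s$ to $\pi(q-\eta)$ with $t(s,L)=2$. Lemma~\ref{l:K}(4) and the structure of Frobenius-type actions on $K$ then bound things so that $r$ divides a proper parabolic subgroup order, which is a contradiction. Alternatively, one can cite the known recognition results for groups with disconnected prime graph \cite[Theorem~1]{25Pan_arxiv}, since the isolated component forces $r\in R_z(L)\cup R_y(L)$ patterns incompatible with Lemma~\ref{l:zy}(5).
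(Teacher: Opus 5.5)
\textbf{Step 3 has a genuine gap.} Your Steps 1 and 2 follow the paper's route; Step 3 does not work as written, for two reasons.

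First, the claim that a prime $r$ of an isolated component of $GK(S)$ avoids $\pi(K)\cup\pi(\overline G/S)$ does not follow from Lemma~\ref{l:str}(3) or Lemma~\ref{l:K}(2). Those lemmas need $r$ to be nonadjacent to $2$ in $GK(G)$, resp.\ $t(r,L)=4$. Isolation in $GK(S)$ gives neither, because $K$ and outer automorphisms can create new adjacencies. For example, a field automorphism of order $2$ of $L_8(9)$ centralizes a copy of $L_8(3)$, so $2\cdot 1093\in\omega(\Aut L_8(9))$. Yet $1093$ lies in the isolated component $\pi(\Phi_7(9))$.

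Second, the last step has no mechanism behind it. That $r$ divides the order of a proper parabolic subgroup is not a contradiction: for $S=L_8(u)$ the isolated primes $r_7(u)$ divide the order of the Levi factor $GL_7(u)$. By \cite[Lemma 2.16]{20YanGrVas}, such divisibility only yields further adjacencies $rs$ with $s\in\pi(K)$, and the connected graph $GK(L)$ may well contain these.

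Your fallback citation of \cite[Theorem~1]{25Pan_arxiv} does not apply either. That result treats groups isospectral to an $L$ with \emph{disconnected} prime graph, whereas here $GK(L)$ is connected and only $GK(S)$ is assumed disconnected.

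The missing idea is arithmetic rather than structural. Once $u\geq 3$, \cite{81Wil, 89Kon.t} show that disconnectedness forces $u=3$ or $S\in\{O_{10}^+(5),L_8(5),L_8(9),U_8(7)\}$. Hence $\max K(S)\leq k_7(9)$, and $q^4/6+1\leq k_z(L)\leq k_7(9)$ bounds $q$. A direct check then shows that $k_z(L)$ divides no element of $K(S)$.

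\textbf{Step 1 is the paper's strategy but is not carried out.} The finiteness step is also misstated. Lemma~\ref{l:meo} only bounds $\meo(S)$ from above, so it gives no upper bound on $u$. You need a lower bound in terms of $u$, such as $(u^4+1)/2\leq\meo(S)\leq\meo(L)<2q^7$, which gives $u\leq 23$ for $q=5$.

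It is convenient to dispose of $u=2$ first. Your second argument in Step 2 already does this using only $q\geq 5$, since $5^4/6+1>73$, so that step need not depend on Step 1.

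The ensuing finite check is not vacuous. For instance, $k_{10}(17)=11\cdot 71\cdot 101$ is divisible by $k_5(5)=11\cdot 71$, and the paper eliminates this case only by observing that $101\notin\pi(L)$.
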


\begin{proof}
If $u=2$, then by Table \ref{tab:t4}, the largest element of $K(S)$ is at most  $k_9(2)=73$. It follows that $q^4/6+1\leq k_z(L)\leq 73$. This is a contradiction because $q\geq 5$. Thus $u\geq 3$.

Let $q=5$. Then the inequalities $(u^4+1)/2\leq \meo(S)\leq \meo(L)\leq 2q^7$ yield $u\leq 23$. For every such $u$, we take an element $k(S)$ in $K(S)$ and check whether $k_z(L)$ divides $k(S)$. This results in $k_z(L)=k_5(5)=11\cdot 71$, $u=17$ and $k(S)=k_{10}(u)=11\cdot 71\cdot 101$; a contradiction since  $101\not\in\pi(L)$.

If $S$ is one of the groups $L^\tau_8(u)$, $O_{10}^\tau(u)$, or $O_{12}^+(u)$, where $u\geq 3$, and $GK(S)$ is connected, then either $u=3$, or $S$ is one of $O_{10}^+(5)$, $L_8(5)$, $L_8(9)$, and $U_8(7)$ (see \cite{81Wil, 89Kon.t}). It follows that the largest element of $K(S)$ is at most $k_7(9)$, so $q^4/6+1\leq k_7(9)$, whence $q\leq 31$. Now it is easy to check that for every  $q\leq 31$, the number $k_z(L)$ does not divided any element of $K(S)$.
\end{proof}

\begin{lemma}\label{l:vRy}
$v\not\in R_y(L)$ 
\end{lemma}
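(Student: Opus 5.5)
We argue by contradiction: suppose $v\in R_y(L)$. By Lemma \ref{l:zy}(3) the set $\{p,r_z(L),v\}$ is a coclique in $GK(L)=GK(G)$, and by Lemma \ref{l:zy}(1) we have $t(v,L)=4$. So $t(v,G)=4$. We will contradict this in two ways.

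\emph{Step 1: $v\notin\pi(K)$.} If $v\in\pi(K)$, then Lemma \ref{l:v_ind}(2) gives $t(v,G)\leq 3$, contradicting $t(v,G)=4$. (This also follows from Lemma \ref{l:K}(2).)

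\emph{Step 2: compare with $t(v,S)$.} Lemma \ref{l:v_ind}(1) gives $t(v,G)\leq t(v,S)$, so $t(v,S)\geq 4$. Now read $S$ off Table \ref{tab:t4}; by Lemma \ref{l:small} we have $u\ge 3$. For linear, unitary, symplectic and odd-dimensional orthogonal groups, $t(v,S)\le 3$ (see the proof of Lemma \ref{l:v_ind}), which is impossible. For $O^\pm_{2m}(u)$ with $m$ odd (that is, $O_{10}^\pm(u)$), again $t(v,S)=3$. This leaves $S=O_8^-(u)$ and $S=O_{12}^+(u)$. For $O_{12}^+(u)$ the table shows that $v$ lies in no coclique of size $4$, since $\Theta\cup\Theta'$ does not contain $v$. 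This leaves only $S=O_8^-(u)$, where $\{r_8,r_6,r_3,v\}$ is a coclique of size $4$.

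\emph{Step 3: rule out $S=O_8^-(u)$.} This is the main obstacle. Here I would use orders. Since $v\in R_y(L)$, we have $v\geq 11$, because $v\equiv 1\pmod y$ with $y\in\{5,7,8,10\}$. The element orders $k_8(u)$, $k_6(u)$, $k_3(u)$, together with $v$, must realize the coclique structure of $L$. By Lemma \ref{l:zy}(5), $v\in R_y(L)$ is the unique-type prime whose neighbourhood is disjoint from that of $r_z(L)$, and $k_z(L)$ divides an element of $K(S)=\{k_8(u),k_6(u),k_3(u)\}$. Then $q^4/6+1\le k_z(L)\le k_8(u)\le (u^4+1)/2$ by Lemma \ref{l:kzky}, so $q<2u$ roughly.

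On the other hand, $v\in R_y(L)$ gives $v\mid k_y(L)$, and the $v$-exponent of $S$, namely $v^2$ or $v^3$ (elements of order $v^2$ exist in $O_8^-(u)$ since $v\geq 11 > 8$ fails; I would check this via unipotent classes: the Jordan block size is at most $8<v$, so $\exp_v(S)=v$), must match $\exp_v(L)=(q^y-\varepsilon^y)_v$ from Lemma \ref{l:exp}(2). The cleaner route is the neighbourhood argument. In $GK(S)$, $v$ is adjacent to $r_4(u)$ and to $R_1(u)\cup R_2(u)$, and $r_4(u)$ is adjacent to $r_z$-type primes only via $k_z(L)\mid k_8(u)$. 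I would show that $v$ and $r_z(L)$ acquire a common neighbour in $GK(G)$, contradicting Lemma \ref{l:zy}(4). Concretely, $k_z(L)$ divides some $k_i(u)$ with $i\in\{8,6,3\}$. Each such $r_i(u)$ has a common neighbour with $v$ in $S$: for $r_8$ this is a prime of $u^2+1$ via the torus $(u^4+1)$ versus the parabolic containing $v(u^2+1)$; for $r_6$ and $r_3$ it is a prime of $u\pm1$ via tori of order $(u^3\pm1)(u\mp1)$ and parabolics $v(u\pm1)$. If that common neighbour lies in $\pi(K)$ or in $\pi(\overline G/S)$, Lemma \ref{l:v_ind}(1) and Lemma \ref{l:str} still give adjacency in $GK(G)$. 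Checking the existence of these common neighbours for $u\ge3$, including the Zsigmondy exceptions $u=3$ for $R_1$ and $R_2$, is the step I expect to need care. If it fails, I would fall back on the size bound $q<2u$ against $v\mid k_y(L)$ and the bound $k_y(L)\le\meo(S)$.
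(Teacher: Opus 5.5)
Your Steps 1 and 2 are fine and give the paper's own reduction to $S=O_8^-(u)$; Step 1 is not actually needed. The gap is in Step 3, and it sits in the only case that can occur.

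\textbf{Why Step 3 fails.} Since $v\ge 11$, $u$ is odd. For odd $u$, the only primes of $\pi(O_8^-(u))$ not adjacent to $2$ lie in $R_8(u)$. The prime $r_z(L)$ is not adjacent to $2$ and avoids $\pi(K)\cup\pi(\overline G/S)$, so $R_z(L)\subseteq R_8(u)$. Thus the subcases $i=6,3$ never arise. For $i=8$, however, the common neighbour you claim does not exist. $R_8(u)$ is a connected component of $GK(O_8^-(u))$: the torus of order $(u^4+1)/(2,u-1)$ is isolated, and $(u^4+1,u^2+1)$ divides $2$, so no prime of $u^2+1$ is adjacent to $r_8(u)$. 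In fact $r_8(u)$ and $v$ have no common neighbour in $GK(S)$ at all. Nor can $K$ or $\overline G/S$ supply one. By Lemma~\ref{l:v_ind}(1), every prime of $(\pi(K)\cup\pi(\overline G/S))\setminus\{v\}$ is adjacent to $v$, hence divides $m_y(L)$, which is coprime to $m_z(L)$; so such a prime is not adjacent to $r_z(L)$. The contradiction with Lemma~\ref{l:zy}(4) is therefore unreachable by this route.

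Your fallbacks do not close the gap either. The equality $\exp_v(S)=v$ is compatible with $(k_y(L))_v=v$. The bounds $q^4/6+1\le k_z(L)\le (u^4+1)/2$ and $k_y(L)\le\meo(S)$ are all consistent with $q$ close to $u$.

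\textbf{The missing idea.} Play the isolation of $R_8(u)$ against $p$ rather than against $v$.
\begin{enumerate}
\item Since $GK(L)$ is connected, there is a prime $r\in\pi(m_z(L)/k_z(L))$. This $r$ is adjacent in $GK(L)$ to both $r_z(L)$ and $p$.
\item The primes $r$, $r_z(L)$ and $p$ are different from $v$ and coprime to $m_y(L)$, by Lemma~\ref{l:zy}(3),(4). By the observation above, they therefore avoid $\pi(K)\cup\pi(\overline G/S)$.
\item Consequently the two adjacencies from step 1 descend to $GK(S)$.
\item Because $R_8(u)$ is a connected component of $GK(S)$ containing $r_z(L)$, you get $r\in R_8(u)$, and then $p\in R_8(u)$.
\item $S$ has a cyclic subgroup of order $k_8(u)$, so $pr_z(L)\in\omega(S)\subseteq\omega(L)$. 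This contradicts Lemma~\ref{l:zy}(3).
\end{enumerate}
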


\begin{proof}
Assume the opposite. By Lemma \ref{l:v_ind}, every prime lying in $(\pi(K)\cup\pi(\overline G/S))\setminus\{v\}$ is adjacent to $v$ in $GK(G)$ and so  divides $m_y(L)$ by Lemma \ref{l:zy}(3). Also it follows that $v\geq 7$ and $t(v, S)\geq t(v,L)=4$. By Table \ref{tab:t4}, this implies that $S=O_8^-(u)$. Since $v$ is odd, $R_z(L)\subseteq R_8(u)$ by \cite[Table 6]{05VasVd.t}.

The graph $GK(L)$ is connected, so there is $r\in \pi(m_z(L)/k_z(L))$. Both  $r$ and $p$ are coprime to $m_y(L)$ and therefore lie in $\pi(S)$. Furthermore,  $r$ is adjacent to both $r_z(L)$ and $p$ in $GK(L)$, and so it is adjacent to them in $GK(S)$ too. Since $R_8(u)$ is a connected component of  $GK(S)$, it follows that $p\in R_8(u)$ and hence $pr_z(L)\in\omega(S)$, a contradiction. 
\end{proof}

\begin{lemma}\label{l:ij}
There are $i,j\in \mathbb N$ such that $R_z(L)\subseteq R_i(\tau u)$, $R_y(L)\subseteq R_j(\tau u)$ and the following hold:
\begin{enumerate}
 \item $8\in \{i,j\}\subseteq \{8,6,4,3\}$ if $S=S_8(u)$, $O_9(u)$, or $O_8^-(u)$;
 \item $7\in \{i,j\}\subseteq \{7,6,5,4\}$ if $S=L_7^\tau(u)$;
 \item $\{i,j\}$ is equal to $\{8,7\}$, $\{8,5\}$, or $\{10,5\}$, respectively, if $S$ is one of $L_8^\tau(u)$, $O_{10}^\tau(u)$, or $O_{12}^+(u)$.
\end{enumerate}
\end{lemma}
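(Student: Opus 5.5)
First I show that $R_z(L)$ and $R_y(L)$ are not contained in $\pi(K)$, do not contain $v$, and are not divisible by $u\pm1$-type primes. By Lemma \ref{l:K}(2), every $r\in R_z(L)\cup R_y(L)$ satisfies $t(r,L)=4$, so $r\notin\pi(K)$. We also know $R_z(L)\cap\pi(\overline G/S)=R_y(L)\cap\pi(\overline G/S)=\varnothing$. Hence by Lemma \ref{l:large} every such $r$ lies in $\pi(S)$ and $t(r,S)\geq 4$, so $r$ lies in some coclique of size $4$ of $GK(S)$. By Lemma \ref{l:vRy}, $v\notin R_y(L)$. Also $v\notin R_z(L)$: otherwise Lemma \ref{l:v_ind}(1) would make every prime of $\pi(K)\cup\pi(\overline G/S)$ adjacent to $v$, and the same connectivity argument as in Lemma \ref{l:vRy}, with the roles of $z$ and $y$ exchanged, would give a contradiction. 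The simpler route here is to use that $r_z(L)$ is not adjacent to $2$ while $v$ is adjacent to $2$ in all groups of Table \ref{tab:t4} with $u$ odd, and to check the case $u$ even separately.

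Next I reduce to primitive prime divisors of $u$. Table \ref{tab:t4} describes the cocliques of size $4$ as $\Theta(S)\cup\{r\}$ with $r\in\Theta'(S)$, and all of their members are primitive divisors $r_k(u)$, with the extra option $v$ for $O_8^-(u)$, which is excluded above. So every prime of $R_z(L)$ lies in $\bigcup_{k}R_k(\tau u)$, where $k$ runs over the indices occurring in $\Theta(S)\cup\Theta'(S)$. The key point is that all primes of $R_z(L)$ lie in a \emph{single} $R_i(\tau u)$, and likewise for $R_y(L)$. For this I use Lemma \ref{l:zy}(4),(5): two primes from the same $R_x(L)$ are adjacent or have common neighbours in $GK(L)$, and hence in $GK(S)$, since adjacency in $GK(S)$ implies adjacency in $GK(L)$. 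Conversely, $r_z(L)$ and $r_y(L)$ are non-adjacent with no common neighbours in $GK(L)$. Since every element of $\omega(S)$ lies in $\omega(L)$, no common neighbour can appear in $GK(S)$ either. Comparing with the adjacency criteria of \cite{05VasVd.t} for $S$, exactly the pairs $\{i,j\}$ listed in the statement are pairs of indices whose primitive divisors have no common neighbours in $GK(S)$. For example, for $O_{10}^\tau(u)$ these are $r_8,r_5$, and for $L_8^\tau(u)$ they are $r_8,r_7$; this is the analogue for $S$ of Lemma \ref{l:zy}(5), and it needs $GK(S)$ connected, which holds by Lemma \ref{l:small}. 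For $S_8,O_9,O_8^-,L_7^\tau$ the prime graph may be disconnected, and there one index must be the "isolated" one ($8$ or $7$) while the other ranges over the remaining coclique indices. That distribution of primes among different $R_k(\tau u)$ is impossible because the primes within one $R_x(L)$ are pairwise adjacent, being divisors of the cyclic order $k_x(L)\in\omega(S)$. Since $k_x(L)\in\omega(S)$, these primes must lie in a common $R_k(\tau u)$: primitive divisors with different indices $k,k'$ from $\Theta(S)$ are non-adjacent.

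The main obstacle is the case analysis over Table \ref{tab:t4}. For each $S$ one must verify from \cite[Propositions 2.1--4.2]{05VasVd.t} which pairs of primitive-divisor classes have no common neighbours, and handle the small primes $2$, $3$, $5$ that may belong to several $R_k$, or to $R_1$, $R_2$. There I would use $r_z(L),r_y(L)\geq 7$, together with Lemma \ref{l:small} ($u\geq 3$, $q\geq 7$), to discard them.
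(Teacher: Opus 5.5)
Your argument is essentially the paper's. Lemma~\ref{l:large} and Table~\ref{tab:t4}, with $v$ excluded, place each $r_z$ and $r_y$ in a primitive-divisor class of $S$. The property ``non-adjacent with no common neighbours'' passes from $GK(L)$ to $GK(S)$. In case (3) the analogue of Lemma~\ref{l:zy}(5) for $S$, whose prime graph is connected by Lemma~\ref{l:small}, forces the listed pairs. In cases (1) and (2) the paper makes your ``isolated index'' step concrete: the primes of $u^2-1$ (without $7$ for $L_7^\tau(u)$) are common neighbours of any two classes other than $8$ (resp.\ $7$).

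Two small repairs are needed. First, drop the sentence that transfers adjacency or common neighbours from $GK(L)$ to $GK(S)$: only $\omega(S)\subseteq\omega(L)$ is available, and $k_x(L)\in\omega(S)$ is the correct reason the primes of $R_x(L)$ are pairwise adjacent. Second, in case (3), derive the single-index claim from pairing each $r_z$ with each $r_y$, not from non-adjacency of $\Theta(S)$-classes, because $\Theta'(S)$-classes such as $r_8,r_4$ in $L_8^\tau(u)$ are adjacent.
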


\begin{proof} Let $r_z\in R_z(L)$ and $r_y\in R_y(L)$.
Both $r_z$ and $r_y$ lie in $\pi(S)$ and their neighborhoods in $GK(S)$ are disjoint. So if $S$ is one of $L_8^\tau(u)$, $O_{10}^\tau(u)$, or $O_{12}^+(u)$, the claim follows from Lemma \ref{l:zy}(4) and Lemma \ref{l:small}.

Let $S=S_8(u)$, $O_9(u)$, or $O_8^-(u)$. Since $t(r_z,S)=t(r_y,S)=4$, 
the description of cocliques of size 4 in Table \ref{tab:t4} and Lemma  \ref{l:vRy} imply that $R_z(L)\subseteq R_i(\tau u)$ and  $R_y(L)\subseteq R_j(\tau u)$, where $i,j\in \{8,6,4,3\}$.  If $i,j\in \{6,4,3\}$, then every prime dividing $u^2-1$ is adjacent to both $r_z$ and $r_y$, which is impossible. 

Similarly, if $S=L_7^\tau(u)$, then $R_z(L)\subseteq R_i(\tau u)$ and  $R_y(L)\subseteq R_j(\tau u)$, where $i,j\in \{7,6,5,4\}$. If  $i,j\in \{6,5,4\}$, then every prime in $\pi(u^2-1)\setminus\{7\}$ is adjacent to both $r_z$ and $r_y$ (cf. \cite[Fig. 6]{24GrPan.t}), which is not the case.
\end{proof}

\begin{lemma} \label{l:aut} Suppose that for every $s\in R_y(L)$, a Sylow  $s$-subgroup of $S$ is cyclic. Then $\pi(\overline G/S)\subseteq \pi(q(q^4-1))$.
\end{lemma}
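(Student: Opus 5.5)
The plan is to take a prime $r\in\pi(\overline G/S)$ with $r\notin\pi(q(q^4-1))$ and derive a contradiction with the cyclicity of the Sylow $s$-subgroups of $S$ for $s\in R_y(L)$.

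\textbf{Step 1: $r$ must be a field-automorphism order.} The outer automorphism group of $S$ is built from diagonal, graph and field automorphisms. Diagonal automorphisms have orders dividing $(m,u\mp1)$ with $m\le 12$, so their primes lie in $\{2,3,5,7\}$. Graph automorphisms have order $2$ or $3$. Since $q$ is odd and $q\ge 7$ by Lemma~\ref{l:small}, the number $q^4-1$ is divisible by $2$, $3$ and $5$. Hence $r\ne 2,3,5$. If $r=7$ comes from a diagonal automorphism of $L_7^\tau(u)$, I would first try to rule it out by hand, or observe that it still gives $7\mid(u-\tau)$. Otherwise $r$ divides $\alpha$, and $\overline G$ contains a field automorphism $\varphi$ of order $r$, with $C_S(\varphi)\ge S_0$, a group of the same type over the subfield of order $u_0=u^{1/r}$.

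\textbf{Step 2: $r$ and $s$ are adjacent in $GK(G)$.} By Lemma~\ref{l:ij}, $R_y(L)\subseteq R_j(\tau u)$ for an explicit $j\ge 3$ with $j\in\{3,\dots,10\}$. Since $r\ge 7$ and $r\nmid j$ for the relevant $j$ (the exception $r=7$, $j=7$ would need separate care), $R_j(\tau u)\cap\pi(S_0)\supseteq R_j(\tau u_0)$ is nonempty. A more direct route is to count via the Sylow subgroup. A Sylow $s$-subgroup $P$ of $S$ is cyclic, and $\varphi$ normalizes some Sylow $s$-subgroup, which we may take to be $P$ itself. Because $P\cap C_S(\varphi)$ contains elements of order $s$ (these come from $S_0$, as $s\in R_j(\tau u_0)$ by Lemma~\ref{l:r-part}-type arguments), $\varphi$ centralizes an element of order $s$. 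Therefore $rs\in\omega(\overline G)\subseteq\omega(L)$.

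\textbf{Step 3: contradiction with Lemma~\ref{l:zy}(3).} The number $m_y(L)$ is the only element of $\mu(L)$ divisible by $s$, so $r$ divides $m_y(L)$. From Table~\ref{tab:zy}, every prime divisor of $m_y(L)$ lies in $R_y(L)\cup\pi(q^4-1)$. Since $r\notin\pi(q^4-1)$, we get $r\in R_y(L)$. But $R_y(L)\cap\pi(\overline G/S)=\varnothing$, as noted before Lemma~\ref{l:K} (via \cite[Lemma 3.5]{20GrZv.t}). This is a contradiction.

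\textbf{Main obstacle.} The delicate point is Step 2: ensuring that $\varphi$ centralizes an element of order $s$. Here cyclicity is used, since it gives $s$-elements of $S_0$ lying in a $\varphi$-invariant cyclic Sylow subgroup, and every element of order $s$ in $P$ lies in the unique subgroup of order $s$. So it suffices that $s$ divides $|S_0|$. This fails only if $r\mid j$, which forces $r=7=j$, possible only for $L_7^\tau(u)$ or $L_8^\tau(u)$. In that case I would use the bound $u^{7}\le\meo$-type estimates from \eqref{e:kum} together with $7\mid\alpha$ to exclude it numerically.
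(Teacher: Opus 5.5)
Your Step~2 has a genuine gap. The claim that $s\in R_j(\tau u_0)$ is false in general. Since $r$ is odd, $\tau u=(\tau u_0)^r$, and this only gives $e(s,\tau u_0)\in\{j,jr\}$: for $r\nmid j$ we have $\Phi_j(x^r)=\Phi_j(x)\Phi_{jr}(x)$, so $R_j(\tau u)=R_j(\tau u_0)\cup R_{jr}(\tau u_0)$. When $r\mid j$, every such $s$ lies in $R_{jr}(\tau u_0)$.

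So the failure is not confined to $r=j=7$. Nothing prevents $R_y(L)$ from lying entirely in $R_{jr}(\tau u_0)$. Then $s$ does not divide $|C_S(\varphi)|$, $\varphi$ may act fixed-point-freely on the subgroup of order $s$ of $P$, and nothing forces $rs\in\omega(\overline G)$, so Step~3 has nothing to work with. Size estimates do not rescue this, since $k_{jr}(\tau u_0)$ is essentially all of $k_j(\tau u)$.

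Note also that your Step~2 never really uses cyclicity: if $s$ divides $|C_S(\varphi)|$, you get $rs\in\omega(\overline G)$ at once. That is a sign the hypothesis is being used in the wrong direction.

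The missing idea is to exploit non-adjacency rather than try to prove adjacency. If $r\in\pi(\overline G/S)$, $rs\notin\omega(G)$, and a Sylow $s$-subgroup of $S$ is cyclic, then $r$ divides $s-1$ by \cite[Lemma 1.7]{20GrZv.t}. Your Step~3 in effect shows that a prime $r\notin\pi(q(q^4-1))$ is adjacent neither to any $r_y\in R_y(L)$ nor to any $r_z\in R_z(L)$. Sylow $r_z$-subgroups of $S$ are cyclic anyway, because $r_z>3$, $2r_z\notin\omega(S)$ and $S\neq L_2(u)$ (cf.\ \cite[Lemma 2.3]{20YanGrVas}). Hence $r$ divides $r_z-1$ and $r_y-1$ for all such primes, so $r\mid (k_z(L)-1,k_y(L)-1)$. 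Lemma~\ref{l:igcd} then puts $r$ in $\{2,3,5\}\cup\pi(q(q^4-1))=\pi(q(q^4-1))$, a contradiction.

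This is the paper's argument. It uses both $R_z(L)$ and $R_y(L)$ essentially: in your framework, the case where neither set meets $\pi(C_S(\varphi))$ is exactly what the gcd computation eliminates. It also handles diagonal, graph and field automorphisms uniformly, so the unresolved $r=7$ cases in your Step~1 disappear.

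A minor point: $3$ and $5$ need not divide $q^4-1$ when $p=3$ or $p=5$. It is $q(q^4-1)$ that is always divisible by $30$, which is all you need.
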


\begin{proof}
Let $r_z\in R_z(L)$, $r_y\in R_y(L)$, $r\in  \pi(\overline G/S)$ and $r\not\in\pi(q^4-1)$. Then $rr_z, rr_y\not\in\omega(G)$. Since $S\neq L_2(u)$ and $r_z>3$, a Sylow $r_z$-subgroup of $S$ is cyclic (see, for example,  \cite[Lemma 2.3]{20YanGrVas}). By \cite[Lemma 1.7]{20GrZv.t}, the prime $r$ divides $r_z-1$ and $r_y-1$. It follows that $r$ divides $(k_z(L)-1, k_y(L)-1)$. It remains to apply Lemma \ref{l:igcd} and note that $\{2,3,5\}\subseteq\pi(q(q^4-1))$.
\end{proof}

Now we are ready to eliminate all possibilities for $S$. We will repeatedly use the following argument: if $k_i(\tau u)$ divides $a\in \omega(S)$ and $k_j(\tau u)$ divides $b\in \omega(S)$, where $i$ and $j$ are as in Lemma \ref{l:ij}, then $a$ divides $m_z(L)$ and $b$ divides $m_y(L)$.

\begin{lemma}\label{l:s8} $S$ is not one of $S_8(u)$, $O_9(u)$, $O_8^-(u)$.
 
\end{lemma}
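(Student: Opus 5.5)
By Lemma \ref{l:ij}(1), there are $i,j\in\{8,6,4,3\}$ with $8\in\{i,j\}$, $R_z(L)\subseteq R_i(u)$ and $R_y(L)\subseteq R_j(u)$. (For these $S$ we have $\tau=+$.) We would first use the repeated argument stated before the lemma. The element orders of $S$ that are divisible by $k_8(u)$ divide $(u^4+1)/(2,u-1)$, and this number is $k_8(u)$ when $u$ is odd. So if $8=i$, then $k_8(u)$ divides $m_z(L)$, and if $8=j$, then it divides $m_y(L)$. The other index $l\in\{6,4,3\}$ gives an element of $S$ of order divisible by $k_l(u)$ together with a large cofactor. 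For example, $S_8(u)$ contains elements of orders $(u^3\pm1)(u+1)/(2,u-1)$ and $(u^2+1)(u^2-1)/(2,u-1)$, and $O_8^-(u)$ contains elements of orders $(u^3+1)(u-1)$-type numbers and $u^4+1$-type numbers. Hence $m_x(L)$, for the corresponding $x$, is divisible by $k_l(u)$ times a factor of size about $u$ coming from $u\pm1$.

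The key step is a size comparison. By Lemma \ref{l:kzky} we have $k_8(u)\geq\min\{k_z(L),k_y(L)\}\geq q^4/6+1$. Also $k_8(u)\leq u^4+1$, so $u^4\gtrsim q^4/6$, that is, $u\gtrsim q/1.6$. In the other direction, $\meo(S)<2u^4$ must be at least $\max\{k_z,k_y\}\geq (q^4+1)/2$. These bounds alone do not give a contradiction. We would therefore exploit the divisibility of element orders more carefully. The set $\pi(u^2-1)$ must embed in $\pi(L)$, and these primes are adjacent in $GK(S)$ to $r_l$ with $l\in\{6,4,3\}$, so they must divide $m_x(L)$ for the index $x$ assigned to $l$. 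We would look at the element of $S$ of order divisible by $k_l(u)(u\pm1)$. This order lies in $\omega(L)$ and is divisible by $r_x(L)$, so it divides $m_x(L)$. Then $k_x(L)\cdot(\text{cofactor})=m_x(L)$, and the small part $m_x(L)/k_x(L)$ is at most about $q+1$ (read off from Table \ref{tab:zy}, with $L_8^\varepsilon$ and $x=8$ as the exception, where it includes $q^4-1$-type factors). Since $(u^2\pm u+1)(u\pm1)$ is roughly $u^3$, this should pin $k_x(L)$ against $k_l(u)$, which is at most about $u^2$, together with a factor of $u\pm1$.

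We would then run the cases on the pair $(i,j)$ and on $L$. Suppose $k_z(L)$ or $k_y(L)$ has cyclotomic degree 4 in $q$, that is, $k_8(q)$, $k_5(\pm q)$ or $k_{10}(q)$, and is therefore about $q^4$. If it lies in $R_l(u)$ with $l\in\{3,4,6\}$, then $q^4/6\lesssim u^2$, which contradicts $u^4\lesssim 4q^4$ for $q\geq7$. The case $k_7(\varepsilon q)$, which is about $q^6$, is even worse. Hence both $k_z(L)$ and $k_y(L)$ would have to divide $k_8(u)$-type numbers. That is impossible, because only one of $i,j$ equals $8$ and Lemma \ref{l:zy}(4) forces the two sets to be disjoint. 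Rigorously, Lemma \ref{l:bounds} gives $k_y(L),k_z(L)\geq q^4/6+1$, while $k_l(u)\leq u^2+u+1$ and, from \eqref{e:kum}, $u^4>q^4/4$. So $u^2+u+1\geq q^4/6+1$ together with $u<(2q^7)^{1/4}$-type bounds yields a contradiction once $q\geq7$, using Lemma \ref{l:small}.

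The main obstacle is that $R_x(L)\subseteq R_l(u)$ gives only divisibility of primes, so $k_x(L)$ need not divide $k_l(u)$. We have to show that the full $k_x(L)$, not just some of its primes, divides an element of $S$ of size about $u^2$. We would use Lemma \ref{l:K}(2) to get $k_y(L)\in\omega(S)$, together with $k_z(L)\in\omega(S)$, and then note that an element of $S$ whose order is divisible only by primes from $R_l(u)$ has order dividing $k_l(u)$. This relies on the Sylow $R_l(u)$-subgroups having exponent $(u^l-1)_r$, which holds for $l\geq3$, with a possible extra factor at $r=3$. This gives $q^4/6+1\leq k_l(u)\cdot 3\leq 3(u^2+u+1)$, and we combine it with $k_8(u)\geq$ the other $k$ and $\meo(S)<2u^4$ to finish.
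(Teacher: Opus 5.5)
Your setup matches the paper's. Lemma~\ref{l:ij} gives $R_x(L)\subseteq R_8(u)$ and $R_w(L)\subseteq R_l(u)$ with $\{x,w\}=\{z,y\}$ and $l\in\{3,4,6\}$, and your Sylow-exponent argument gives $k_w(L)\mid k_l(u)$, hence $k_w(L)\le u^2+u+1$.

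\textbf{The gap.} It lies in the final size comparison, which is the whole content of the lemma.
\begin{itemize}
\item The bound ``$u^4\lesssim 4q^4$'' in your third paragraph is \eqref{e:kum} read backwards. Inequality \eqref{e:kum} gives $2u^4>\meo(S)\ge (q^4+1)/2$, which is a \emph{lower} bound on $u$, as is $k_8(u)\ge q^4/6+1$.
\item The only upper bound you actually invoke is the $\meo$-type bound $u^4<2q^7$ (or $4q^7$). Together with $q^4/6+1\le u^2+u+1$ it gives no contradiction: for instance $u=2^{10}$, $q=7^2$ satisfies both.
\item Your extra factor $3$ only weakens things further and is unnecessary. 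Every $r\in R_l(u)$ satisfies $r\equiv 1\pmod l$, so $3\notin R_l(u)$ for $l\ge 3$.
\end{itemize}

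\textbf{The fix.} You state the needed fact in your first paragraph but never use it quantitatively. The element of order $(u^4+1)/(2,u-1)$ of $S$ is divisible by $r_x(L)$, so it divides $m_x(L)$. Hence $(u^4+1)/2\le m_x(L)$, and $m_x(L)$ is much smaller than $\meo(L)$. The paper then argues as follows.
\begin{itemize}
\item For $L=L_8^\varepsilon(q)$ it uses $m_x(L)\le 7q^7/12$, from Lemma~\ref{l:bounds}(5) and $m_7(L)\le (q^7+1)/2$. It combines this with the sharper bound $k_w(L)\ge (q^4+1)/2$, which holds for both $k_7(\varepsilon q)$ and $k_8(q)$. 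The result is $6u^4<7(2u^2+2u+1)^{7/4}$, forcing $u\le 17$ and $q\le 5$.
\item For $O_{10}^\varepsilon(q)$ and $O_{12}^+(q)$ it uses $m_x(L)\le (q^4+1)(q+1)/2\le q^5$ together with $k_w(L)\ge q^4/6+1$. The result is $u^4<2\bigl(6(u^2+u)\bigr)^{5/4}$, forcing $u\le 7$ and $q<5$.
\end{itemize}
Both conclusions contradict Lemma~\ref{l:small}. In the $L_8^\varepsilon(q)$ case, using $q^4/6+1$ instead of $(q^4+1)/2$, or keeping your factor $3$, would still allow $u$ of size several hundred. So these sharpenings are genuinely needed, not cosmetic.
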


\begin{proof}
Assume the opposite. By Lemma \ref{l:ij}, we have $R_x(L)\subseteq R_8(u)$ and $R_w(L)\subseteq R_i(u)$, where $\{w,x\}=\{z,y\}$ and  $i\in\{3,6,4\}$. In particular,  
\begin{equation}\label{e:s8ineq}
\frac{u^4+1}{2}\leq m_x(L)\quad\text{and}\quad   k_w(L)\leq \max\{k_3(u), k_6(u), k_4(u)\}\leq u^2+u+1.                                                                                                 
 \end{equation}

Let $L=L_8^\varepsilon(q)$. It is easy to see that $m_7(L)\leq (q^7+1)/2<7q^7/12$, and $m_8(L)<7q^7/12$ by Lemma \ref{l:bounds}. Hence  $m_x(L)\leq 7q^7/12$. Also $k_w(L)\geq (q^4+1)/2$. Combining these bounds with \eqref{e:s8ineq} yields $6u^{4}<7q^7$ and $q^4\leq 2u^2+2u+1$. Thus  $6u^4<7(2u^2+2u+1)^{7/4}$, whence $u\leq 17$ and then $q=5$. This contradicts  Lemma~\ref{l:small}.

Similarly, if $L=O_{10}^\varepsilon(q)$ or $O_{12}^+(q)$, then $m_x(L)\leq (q^4+1)(q+1)/2\leq q^5$ and $k_w(L)\geq q^4/6+1$, which yields $u^{4}<2q^5\leq 2(6(u^2+u))^{5/4}$. It follows that $u\leq 7$ и $q<5$, a contradiction. \end{proof}

\begin{prop}\label{p:l8}
If $L=L_8^\varepsilon(q)$, then $t(S)\neq 4$.
\end{prop}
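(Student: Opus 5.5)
By Lemma \ref{l:s8} and the classification in Table \ref{tab:t4}, it remains to exclude $S=L_7^\tau(u)$, $L_8^\tau(u)$, $O_{10}^\tau(u)$ and $O_{12}^+(u)$ with $u\geq 3$. Here $L=L_8^\varepsilon(q)$, so $\{z,y\}=\{7,8\}$. By Lemma \ref{l:ij}, $R_z(L)\subseteq R_i(\tau u)$ and $R_y(L)\subseteq R_j(\tau u)$. Whenever $k_i(\tau u)$ divides some $a\in\omega(S)$ and $k_j(\tau u)$ divides some $b\in\omega(S)$, we get $a\mid m_z(L)$ and $b\mid m_y(L)$. Our main tool is to compare sizes, using Lemma \ref{l:bounds}(4),(5): one of $m_z(L),m_y(L)$ has order roughly $q^7$, and both $k_7(\varepsilon q)$ and $k_8(\varepsilon q)$ are large.

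For $S=O_{10}^\tau(u)$ or $O_{12}^+(u)$, the pair $\{i,j\}$ is $\{8,5\}$ or $\{10,5\}$. Then $r_7(\varepsilon q)$ lies in $R_5(\pm u)$, $R_8(u)$ or $R_{10}(u)$. We pass through the primitive divisor sets: $e(r,\varepsilon q)=7$ forces $7\mid r-1$. The cleaner route, however, is to compare sizes. First, $k_7(\varepsilon q)>q^6/8$ must divide $k_5(\pm u)$ (or $k_8(u)$, $k_{10}(u)$), all of which are at most about $u^4$. Second, $m_x(L)$ must be divisible by an element of $\omega(S)$ containing $k_5(\tau u)$ together with extra factors of size $u+1$. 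The first gives $q^6\lesssim u^4$. Against this, (\ref{e:kum}) gives $q^4/2<\meo(S)\leq 2u^6$, and $\max\{k_z,k_y\}\geq (q^4+1)/2$ is bounded by the corresponding $k$'s of $S$. Combining these inequalities should leave only small $u$ and $q$, which we then kill by direct checking as in Lemma \ref{l:small}.

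For $S=L_7^\tau(u)$, Lemma \ref{l:ij}(2) gives $7\in\{i,j\}\subseteq\{7,6,5,4\}$. Consider the index $w$ with $R_w(L)\subseteq R_l(\tau u)$, $l\in\{6,5,4\}$. Then $k_w(L)\geq q^4/6+1$ is at most $k_l(\tau u)\leq u^4$. On the other side, the element of order $(u^7-\tau)/(7,u-\tau)$ divides $m_x(L)\leq 7q^7/12$. From these bounds we extract a contradiction or a finite list.

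The case $S=L_8^\tau(u)$ with $\{i,j\}=\{7,8\}$ is the main obstacle, since orders match naturally. We first show $u\neq q$ up to the defining characteristic, and then that the orders must coincide. From $k_7(\varepsilon q)\mid m_7(S)$ and $k_8(q)\mid m_8(S)$, with both sides of comparable size, we deduce that $u$ and $q$ are close, roughly $q^{7/6}\lesssim u \lesssim q^{8/7}$. Next we use the exact divisibilities $m_7(S)\mid m_7(L)$-type statements together with Lemma \ref{l:igcd} and Lemma \ref{l:igcd1}, which control gcds such as $(7k_8(a)-1,\Phi_i(a))$. If $k_z(L)=k_i(\tau u)$ up to a small cofactor, the congruence $k_i(\tau u)\equiv 1 \pmod{\text{small}}$ leads to polynomial identities in $u$ and $q$. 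By strict monotonicity, these force $u=q$ and hence $v=p$, contradicting the choice of $S$. I expect this final arithmetic step, where the cofactors $(8,q-\varepsilon)$ and $7$ appear, to be the hardest. There I would handle separately the subcases $7\mid q-\varepsilon$ versus $7\nmid q-\varepsilon$ and $(q-\varepsilon)_2>8$ versus $(q-\varepsilon)_2\leq 8$, and use Lemma \ref{l:aut} to restrict $\pi(\overline G/S)$.
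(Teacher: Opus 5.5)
Your treatment of $S=L_7^\tau(u)$ has a genuine gap. You use only the two sets $R_z(L)$ and $R_y(L)$, but Lemma \ref{l:ij}(2) allows the partner of $7$ to be $l=5$. In that case your two inequalities are $q^4/6<k_5(\tau u)\lesssim u^4$ and $u^6/8<k_7(\tau u)\le m_x(L)<7q^7/12$. (Incidentally, the largest element order of $L_7^\tau(u)$ divisible by $r_7(\tau u)$ is $(u^7-\tau)/((u-\tau)(7,u-\tau))$, not $(u^7-\tau)/(7,u-\tau)$.) Both inequalities hold for all $u$ comparable to $q$, so they give neither a contradiction nor a finite list.

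The missing idea is to bring in a third large primitive-divisor set of $L$. Lemma \ref{l:ij} makes the Sylow $r_y$-subgroups of $S$ cyclic, so Lemma \ref{l:aut} gives $R_5(\varepsilon q)\cap\pi(\overline G/S)=\varnothing$. Lemma \ref{l:large} then puts $R_5(\varepsilon q)$ into $\pi(S)$ with $t(r,S)=4$, that is, inside some $R_l(\tau u)$ with $l\in\{7,6,5,4\}$. Here $l\notin\{i,j\}$, because $r_5(\varepsilon q)$ is adjacent in $GK(L)$ neither to $r_7(\varepsilon q)$ nor to $r_8(q)$. By pigeonhole, one of $k_8(q)$, $k_7(\varepsilon q)$, $k_5(\varepsilon q)$, all of which exceed $q^4/6$, divides $k_6(\tau u)$ or $k_4(\tau u)$, and these are at most $u^2+u+1$. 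Now $q^4<6(u^2+u)$ together with $u^6/8<7q^7/12$ is contradictory for $q\ge 7$.

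The other cases are only loosely sketched, though they can be repaired.

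\textbf{The case $S=L_8^\tau(u)$.} Your range $q^{7/6}\lesssim u\lesssim q^{8/7}$ is not what the divisibilities give; it is empty for large $q$. You need to split the two matchings.
\begin{itemize}
\item If $R_7(\varepsilon q)\subseteq R_8(u)$, then $m_8(S)>3u^7/28$ must divide $m_7(L)\le(q^7+1)/2$, while $q^6/8<k_7(\varepsilon q)\le u^4+1$. These are incompatible.
\item If $R_8(q)\subseteq R_8(u)$, sizes only give $u\asymp q$. What finishes the case is not Lemma \ref{l:igcd} or \ref{l:igcd1}. It is the fact that $k_8(q)\ne k_8(u)$ (since $u\neq q$, and $q^4=2u^4+1$ is impossible for $u$ a power of $2$), while every prime divisor of $k_8(u)$ is $\equiv 1\pmod 8$, hence at least $17$. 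Thus $q^4\le 2u^4/17$, which contradicts $m_8(S)\mid m_8(L)$.
\end{itemize}

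\textbf{The cases $O_{10}^\tau(u)$ and $O_{12}^+(u)$.} Your two constraints do bound $q$. But when $k_7(\varepsilon q)=k_i(\tau u)$ they only give $u\approx q^{3/2}$ with $q$ bounded by a constant of order $10^3$, not the small check you anticipate. The paper instead separates two situations.
\begin{itemize}
\item If $k_7(\varepsilon q)\neq k_i(\tau u)$, the cofactor is at least $11$, which forces $u\le 32$.
\item If $k_7(\varepsilon q)=k_i(\tau u)$, then $k_6(\varepsilon q)$ divides $(7,q-\varepsilon)k_i(\tau u)-1=q(q^6-1)/(q-\varepsilon)$. Meanwhile $R_6(\varepsilon q)$ is confined, again by Lemmas \ref{l:aut} and \ref{l:large}, to specific sets $R_l(\tau u)$. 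Lemma \ref{l:igcd1}, or in one subcase a further size estimate, then leaves only $\varepsilon q\in\{7,-13,-25\}$.
\end{itemize}
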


\begin{proof}
Assume the opposite. By Lemma \ref{l:s8}, the group $S$ is not $S_8(u)$, $O_9(u)$, $O_8^-(u)$. Let  $i$ and  $j$  be as in Lemma \ref{l:ij}. 
Replacing $j$ by $i$ if necessary, we may assume that $R_7(\varepsilon q)\subseteq R_i(\tau u)$. 

Let $k=5,6$. By Lemma \ref{l:ij}, for every $r_y\in R_y(L)$, a Sylow $r_y$-subgroup of $S$ is cyclic and so $R_k(\varepsilon q)\cap \pi(\overline G/S)=\varnothing$ by Lemma \ref{l:aut}. Applying Lemma \ref{l:large}, we conclude that $R_k(\varepsilon q)\subseteq \pi(S)$ and $t(r,S)=4$ for every   $r\in R_k(\varepsilon q)$.

\textbf{1.} Let $S=L_7^\tau(u)$. Using the description of cocliques of size  $4$, we conclude that there is $l\in \{7,6,5,4\}$ such that $R_5(\varepsilon q)\subseteq R_l(\tau u)$. So $\{i,j,l\}\subseteq \{7,6,5,4\}$. It follows that there are  $w\in \{8,7,5\}$ and $t\in \{6,4\}$ such that  $R_w(\varepsilon q)\subseteq k_t(\tau u)$ and hence 
\begin{equation}\label{e:l8ineq1}
q^4/6+1< k_w(\varepsilon q)\leq k_t(\tau u)\leq u^2+u+1. 
\end{equation}

Since $7\in\{i,j\}$, there is $x\in\{z,y\}$ such that $k_7(\tau u)$ divides $m_x(L)$. As noted above, $m_x(L)\leq 7q^7/12$.  Also $k_7(\tau u)>u^6/8$ by  Lemma \ref{l:bounds}. Thus $u^6/8<7q^7/12$.  Combining this with  \eqref{e:l8ineq1} yields $3u^6<14q^7<14(6(u^2+u))^{7/4}$, whence $u\leq 7$ and $q<5$, a contradiction. 

\textbf{2.} Let $S=L_8^\tau(u)$ and $i=7$. Then $k_8(q)$ divides $k_8(u)$ and $m_8(u)$ divides  $m_8(q)$. It is easy to check that $k_8(q)\neq k_8(u)$. Prime divisors of $k_8(u)$ are at least $17$, so $q^4\leq 2u^4/17$.
Thus  $$\frac{u^7}{16}<\frac{u^8-1}{8(u+1)}\leq m_8(u)\leq m_8(q)\leq q^7\leq \left(\frac{2}{17}\right)^{7/4}u^7<\frac{u^7}{42},$$ a contradiction.

\textbf{3.} We are left with the case when $S$ is one of $L_8^\tau(u)$, $O_{10}^\tau(u)$, or $O_{12}^+(u)$ and $i\in\{5,8,10\}$. Let $m_i(S)$ be the only number in $\mu(S)$ divisible by $r_i(\tau u)$ (see Lemma \ref{l:zy} and Table \ref{tab:zy}). We will exploit the fact that $k_7(\varepsilon q)$ divides $k_i(\tau u)$ and the inequality 
\begin{equation}\label{e:l8ineq2}
 (u^4+1)(u-1)/4\leq m_i(S)\leq m_7(L)\leq (q^7+1)/2. 
\end{equation}

Suppose that $k_7(\varepsilon q)\neq k_i(\tau u)$. Prime divisors of  $k_i(\tau u)$ is at least $2i+1$ and so 
\begin{equation}\label{e:l8ineq3}
q^6/8<k_7(\varepsilon q)\leq k_i(\tau u)/11\leq 2u^4/11.                                                                          
\end{equation}
It follows from \eqref{e:l8ineq2} and \eqref{e:l8ineq3} that 
$(u^4+1)(u-1)<2(16u^4/11)^{7/6}+2$, whence  $u\leq 32$ and $q\leq 9$. But then $k_7(\varepsilon q)>q^6/2$ and the resulting inequality is $(u^4+1)(u+1)<2(4u^4/11)^{7/6}+2$, whence $u<2$. 

Thus  $k_7(\varepsilon q)=k_i(\tau u)$ and
\begin{equation}\label{e:l8eq}
\frac{q(q^6-1)}{q-\varepsilon}=(7,q-\varepsilon)k_i(\tau u)-1. 
\end{equation}
Denote  $(7,q-\varepsilon)k_i(\tau u)-1$ by $b$. Then $k_6(\varepsilon q)$ divides $b$. 

On the other hand, recall that $t(r,S)=4$ for every $r\in R_6(\varepsilon q)$. Also $r\not\in R_4(u)$ if $S=L_8^\tau(u)$ since otherwise $r$ is adjacent to primes in $R_8(u)$ and so to $r_z(L)$ or $r_y(L)$. The description of cocliques of size 4 implies that $R_6(\varepsilon q)\subseteq R$, where $R$ is as follows: $R=R_l(\tau u)$ with $l\in\{5,6\}$ if $S=L_8^\tau(u)$; $R=R_3(\tau u)$ or $R=R_6(\tau u)\cup R_4(\tau u)$ if $S=O_{10}^\tau(u)$;  and $R=R_8(u)$ or $R=R_6(u)\cup R_3(u)$ if $S=O_{12}^+(u)$. 

Assume that $(7, q-\varepsilon)=1$ and if $k_i(\tau u)=k_5(\epsilon u)$, then $(5,u-\epsilon)=1$. Then $b=k_i(\tau u)-1\in\{u^4, (u^4-1)/2, u(u^4-1)/(u-\epsilon)\}$. Since $R_6(\varepsilon q)\subseteq\pi(u(u^4-1))\cap R$, we have 
$S=O_{10}^\tau(u)$ and $R_6(\varepsilon q)\subseteq R_4(\tau u)$. By then 
$R_5(\varepsilon q)\subseteq R_3(\tau u)$, and hence $q^4/6<u^2+u$. Together with \eqref{e:l8ineq2}, this yields $(u^4+1)(u-1)<2(6(u^2+u))^{7/4}+2$, whence $u\leq 13$ and $q=5$, a contradiction. 

Now let $(q-\varepsilon, 7)=7$ or $k_i(\tau u)=(u^5-\epsilon)/(5(u-\epsilon))$. Suppose that $i=8$. Then $k_6(\varepsilon q)$ divides $7k_8(u)-1$. On the other hand, $k_6(\varepsilon q)$ divides one the numbers $\Phi_5(\tau u)$, $\Phi_3(\tau u)$ and $\Phi_6(\tau u)\Phi_4(u)$. Applying Lemma \ref{l:igcd1} and observing that prime divisors of $k_6(\varepsilon q)$ are congruent to 1 modulo 6, we conclude that $k_6(\varepsilon q)$ divides $31\cdot 61$, $151$, or $43\cdot 13$. It follows that $\varepsilon q\in\{7, -13\}$. Since $(q-\varepsilon, 7)=7$, we are left with the equation $k_8(u)=k_7(-13)$, which has no solutions in integers. 

Similarly, if $i=5$ or $10$, then $k_6(\varepsilon q)$ divides, on the one hand, one of the numbers $7\Phi_5(\pm u)-1$, $7\Phi_5(\pm u)-5$ и $\Phi_5(\pm u)-5$. On the other hand, it divides $\Phi_6(\pm u)\Phi_4(u)$, $\Phi_6(u)\Phi_3(u)$ or $\Phi_8(u)$. Observe that $\Phi_5(\pm u)\equiv 1\pmod {\Phi_4(u)}$ and hence $(b,\Phi_4(u))$ divides $6$, $2$ or $4$. This shows that $(k_6(\varepsilon q),\Phi_4(u))=1$. Thus $k_6(\varepsilon q)$ divides  $\Phi_6(u)\Phi_3(u)=\Phi_3(u^2)$ or $\Phi_8(u)=\Phi_4(u^2)$. Applying Lemma  \ref{l:igcd1}, we see that it divides $1297$, $337$,  $13\cdot 109$, or $19\cdot 43$ if $(q-\varepsilon, 7)=7$ and $97$ or $7\cdot 31$ otherwise.  It follows that $\varepsilon q=-25$ and so $k_5(\pm u)=k_7(-25)$. This equation has no solutions and the proof is complete. 
\end{proof}

\begin{prop}\label{p:o10}
If $L$ is one of the groups $O_{10}^\varepsilon(q)$ or $O_{12}^+(q)$, then $t(S)\neq 4$.
\end{prop}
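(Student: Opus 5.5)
We argue by contradiction, as in the proof of Proposition~\ref{p:l8}. Suppose $t(S)=4$. By Lemma~\ref{l:s8}, $S$ is one of $L_7^\tau(u)$, $L_8^\tau(u)$, $U_7(u)$, $O_{10}^\tau(u)$, $O_{12}^+(u)$ (and, by Lemma~\ref{l:small}, $u\ge 3$, $q\ge 7$). Let $i,j$ be as in Lemma~\ref{l:ij}. Here $\{z,y\}=\{8,5\}$ for $L=O_{10}^\varepsilon(q)$ and $\{z,y\}=\{10,5\}$ for $L=O_{12}^+(q)$. Lemma~\ref{l:ij} shows that Sylow $r_y(L)$-subgroups of $S$ are cyclic, so Lemma~\ref{l:aut} gives $\pi(\overline G/S)\subseteq\pi(q(q^4-1))$. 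Lemma~\ref{l:large} then shows that the primes in the remaining coclique classes of $L$, namely $R_3(\varepsilon q)$ and $R_6(\varepsilon q)$ (or $R_4(q)$ if $L=O_{10}^\varepsilon(q)$, respectively $R_3(q),R_6(q)$ if $L=O_{12}^+(q)$), lie in $\pi(S)$ and have $t(r,S)=4$. We will use these extra primes as a third constraint, exactly as $R_6(\varepsilon q)$ was used in Proposition~\ref{p:l8}.

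\textbf{Step 1: $S=L_7^\tau(u)$ or $U_7(u)$.} Here $7\in\{i,j\}$. So $k_7(\tau u)>u^6/8$ divides some $m_x(L)\le (q^4+1)(q+1)/2\le q^5$. On the other side, $k_w(L)\ge q^4/6+1$ for the other $w$, and this must divide a $k_t(\tau u)$ with $t\in\{6,5,4\}$; the index $t=5$ can be excluded using the coclique of the third prime class as in step 1 of Proposition~\ref{p:l8}, so $k_t(\tau u)\le u^2+u+1$. This gives $u^6<8q^5\le 8(6(u^2+u))^{5/4}$, hence $u$ is tiny and $q<7$, a contradiction. (Note $k_5(\tau u)\le u^4$ alone would not suffice, so the exclusion of $t=5$ is needed.)

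\textbf{Step 2: $S$ is one of $L_8^\tau(u)$, $O_{10}^\tau(u)$, $O_{12}^+(u)$.} Lemma~\ref{l:ij}(3) gives $\{i,j\}=\{8,7\},\{8,5\},\{10,5\}$. First compare sizes. From $m_{i}(S)\mid m_x(L)$ and $k_x(L)\mid k_i(\tau u)$ we get a contradiction, as in step 2 of Proposition~\ref{p:l8}, whenever the ranks are mismatched. For example, if $S=L_8^\tau(u)$ then $k_7(\tau u)>u^6/8$ divides some $m_x(L)\le q^5$, while $k_8(u)\le u^4$ is divisible by $k_w(L)\ge q^4/6$, which is impossible. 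The same mismatch argument handles $O_{12}^+(u)$ against $O_{10}^\varepsilon(q)$, using the bound $m_{10}(S)\approx u^5/2$. In the remaining cases, when the Lie types essentially match, the inequalities force $k_x(L)=k_i(\tau u)$ exactly: if the two were unequal, the cofactor would be at least $2i+1\ge11$, and the inequalities would collapse as in \eqref{e:l8ineq3}. Then there are three sub-cases. If $k_5(\varepsilon q)=k_5(\tau u)$ or $k_5(-u)$, Lemma~\ref{l:k5} applies; with $k_{10}$ matching as well, it forces $q=u$ in absolute value, contradicting $v\ne p$. If the equality is $k_8(q)=k_8(u)$, this gives $q=u$ directly. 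A cross identification such as $k_8(q)=k_5(\pm u)$ or $k_5(\varepsilon q)=k_8(u)$ is handled by the method of step 3 of Proposition~\ref{p:l8}. Namely, the primes of $R_3(\varepsilon q)\cup R_6(\varepsilon q)$ (or $R_4(q)$) divide $k_x(L)-1$ up to the small factor $(5,\cdot)$ or $2$, and also divide $\Phi_3(u^2)$, $\Phi_4(u^2)$, $\Phi_4(u)$ or $\Phi_6(u)$ by the coclique description of Table~\ref{tab:t4}. Lemmas~\ref{l:igcd}, \ref{l:igcd1} and \ref{l:igcd2} then bound these gcds by explicit small constants, so $k_3(\varepsilon q)$ or $k_4(q)$ is bounded, forcing $q$ small and contradicting Lemma~\ref{l:small}. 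Lemma~\ref{l:igcd2}, whose polynomial $x^3+2x^2+3x+4$ arises from $(\Phi_5(x)-5)/(x-1)$-type expressions, is presumably designed exactly for $k_5=k_5$ type coincidences with $(5,\cdot)=5$.

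\textbf{Main obstacle.} The delicate part is this last arithmetic sub-case. There $k_x(L)=k_i(\tau u)$ holds with a nontrivial correction factor $5$ or $2$, so Lemma~\ref{l:k5} does not immediately apply. We must track which of $R_3,R_4,R_6$ of $L$ land in which $R_l(\tau u)$ and reduce to a finite check with the gcd lemmas. A finite list of equations like $k_5(\pm u)=k_8(q_0)$ then remains, to be verified to have no solutions.
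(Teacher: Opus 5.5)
Your case split and the size-and-equality machinery (forcing $k_x(L)=k_i(\tau u)$ via a cofactor $\ge 2i+1$, then Lemma~\ref{l:k5}) follow the paper, but two steps fail.

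\textbf{Step 1.} The pigeonhole from Proposition~\ref{p:l8} does not transfer to $L=O_{10}^\varepsilon(q)$. There, $L_8^\varepsilon(q)$ had three pairwise non-adjacent classes $R_8,R_7,R_5$ with $k\ge q^4/6$. They compete for only two large slots ($7$ and $5$) of $L_7^\tau(u)$. For $O_{10}^\varepsilon(q)$ the remaining coclique classes $R_3(\varepsilon q)$ and $R_6(\varepsilon q)$ have $k$ of order $q^2$. So the coclique count does not exclude $R_w(L)\subseteq R_5(\tau u)$ with $R_3(\varepsilon q),R_6(\varepsilon q)$ sitting in $R_6(\tau u),R_4(\tau u)$. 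Hence your bound $k_w(L)\le u^2+u+1$ is unjustified. (It is fine for $O_{12}^+(q)$, where $R_8(q)$ is a third large class.) With only $k_w(L)\le (u^5-1)/(u-1)$, the inequalities give $u\le 37$, not a contradiction. The paper finishes by splitting on $(7,u-\tau)$ and checking that $e(k_7(\tau u),q)\notin\{5,8,10\}$ for the remaining pairs. Using $\max\{k_3(\varepsilon q),k_6(\varepsilon q)\}\le u^2+u+1$, i.e.\ $q\le u+1$, shortens the list but still leaves a check.

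\textbf{Step 2.} $S=O_{12}^+(u)$ against $L=O_{10}^\varepsilon(q)$ is not a size mismatch. Both $m_{10}(S)\approx u^5/2$ and $k_5(u),k_{10}(u)\approx u^4$ are of the same size as $m_x(L)$, $k_x(L)$ when $u\approx q$. Your $L_8^\tau(u)$ example likewise only bounds $u$, unless you pair $m_8(S)\approx u^7/8$ with $k_x(L)\mid k_8(u)$ for the same $x$.

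More seriously, the cross identifications, which are the real content of the proposition, are not handled. Primes of $R_3(\varepsilon q)\cup R_6(\varepsilon q)$ do not divide $k_8(q)-1=(q^4-1)/2$, nor $k_5(a)-1=a(a+1)(a^2+1)$ when $(5,a-1)=1$. For $R_4(q)$ the relevant gcd with $\Phi_4(u)$ is not small. So Lemmas~\ref{l:igcd}--\ref{l:igcd2} give no bound on $q$, and no finite list of equations arises. (Also $R_4(q)\subseteq\pi(q^4-1)$, so Lemma~\ref{l:aut} does not place it in $\pi(S)$.)

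The paper argues as follows.
\begin{itemize}
\item For $k_5(\epsilon q)=k_8(u)$ with $(5,q-\epsilon)=1$: one gets $q(q^2+1)(q+\epsilon)=(u^4-1)/2$. Since $p$ and $R_4(q)$ are non-adjacent to both $r_z(L)$ and $r_y(L)$, they lie in $R_4(u)$. Then $q(q^2+1)\mid u^2+1$ and $u^2-1\mid 2(q+\epsilon)$, which is impossible.
\item For $(5,q-\epsilon)=5$: here $k_5(\epsilon q)-1=f(\epsilon q)(\epsilon q-1)/5$ with $f(x)=x^3+2x^2+3x+4$. Then $f(\epsilon q)\in\omega(L)$ divides $\exp_{p'}(L)$, and Lemma~\ref{l:igcd2} bounds it. This is the purpose of that lemma, not $k_5=k_5$ coincidences.
\item For $S=O_{12}^+(u)$: one has $k_8(q)=k_5(\epsilon u)$ and $k_5(\varepsilon q)=k_5(-\epsilon u)$. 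Lemma~\ref{l:k5} plus a size estimate kill the case $(5,u-\epsilon)=5$. Otherwise $(q^4-1)/2=u(u^2+1)(u+\epsilon)$ forces $r_4(q)\in\{v\}\cup R_4(u)$. Then $r_3(\varepsilon q)\in\pi(S)$ (by Lemmas~\ref{l:aut} and \ref{l:large}) must lie in $R_5(u)\cup R_{10}(u)$, so it is adjacent to $r_8(q)$ or $r_5(\varepsilon q)$, a contradiction.
\end{itemize}
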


\begin{proof}
Assume the opposite. By Lemma \ref{l:s8}, the group $S$ is not $S_8(u)$, $O_9(u)$, $O_8^-(u)$.

\textbf{1.} Let $S=L_7^\tau(u)$. By Lemma \ref{l:ij}, we have $R_x(L)\subseteq R_7(\tau u)$ and $R_w(L)\subseteq R_l(\tau u)$, where $\{x,w\}=\{z,y\}$ and $l\in\{ 6,5,4\}$. Hence $u^6/8<k_7(\tau u)\leq m_x(L)\leq (q^4+1)(q+1)/2$ and
$q^4/6+1<k_w(L)\leq (u^5-1)/(u-1)$. Solving this system of inequalities by excluding $q$, we conclude that $u\leq 37$.

If $(u-\tau,7)=1$, then we may write $k_7(\tau u)\geq 2u^6/3$ and then a stronger system yields $u\leq 8$ and $q\leq 11$. Evaluating $k_7(\tau u)$ for every $u\leq 8$ and then $e=e(k_7(\tau u),q)$ for every $7\leq q\leq 11$, we see that $e\geq 6$ and $e\not\in\{5,8,10\}$. This is a contradiction because  $k_7(\tau u)$ divides $m_x(L)$.

If $\tau u= -13, 27$, or $-29$, then $q\leq 43$. Arguing as in the previous paragraph, we conclude that $k_7(\tau u)$ cannot divide $m_x(L)$.

\textbf{2.} Let  $S=L_8^\tau(u)$ and choose $x\in\{z,y\}$ so that $R_x(L)\subseteq R_8(u)$. Then $$(u^8-1)/(8u-8)\leq m_8(u)\leq m_x(L)\leq (q^4+1)(q+1)/2$$ and $q^4/6+1<k_x(L)\leq k_8(u)\leq u^4+1$. Solving this system yields  $u\leq 5$ and $q<7$. 

\textbf{3.} Let $S=O_{10}^\tau(u)$. Again we choose $x\in\{z,y\}$ so that   $R_x(L)\subseteq R_8(u)$. Then $(u^4+1)(u-1)/4\leq m_8(u)\leq m_x(L)\leq (q^4+1)(q+1)/2$ and $k_x(\varepsilon q)$ divides $k_8(u)$.

Assume that $k_x(\varepsilon q)\neq k_8(u)$. Then $q^4/6+1<k_x(\varepsilon q)\leq k_8(u)/17\leq (u^4+1)/17$, which yields $u=3$ and $q<5$. 

Thus $k_x(\varepsilon q)=k_8(u)$, whence $x=5$ or $10$. More exactly,  $k_x(\varepsilon q)=k_5(\epsilon q)$, where $\epsilon=\varepsilon$ if  $L=O_{10}^\varepsilon(q)$ and $\epsilon\in\{+,-\}$ if $L=O_{12}^+(q)$. 
Subtracting 1 from both sides of the equation $k_5(\epsilon q)=k_8(u)$, we get  
\begin{equation}\label{e:o10eq}
\frac{q^5-\epsilon}{(q-\epsilon)(5,q-\epsilon)}-1=\frac{u^4+1}{(2,u-1)}-1. 
\end{equation}

Suppose that $(5,q-\epsilon)=1$. Then the left side of \eqref{e:o10eq}
is equal to $q(q^2+1)(q+\epsilon)$, while the right side is equal to $u^4$ or $(u^4-1)/2$. The first case is impossible, so $u$ is odd and $\{p\}\cup R_4(q)\subseteq \pi(u^4-1)$. Every odd prime dividing $u^2-1$ is adjacent either to primes in $R_i(\tau u)$ or to primes in $R_j(\tau u)$, where $\{i,j\}$ are as in Lemma \ref{l:ij}, and, therefore, to primes of $R_z(L)$ or $R_y(L)$. Hence  $\{p\}\cup R_4(q)\subseteq R_4(u)$. It follows that $q(q^2+1)$ divides $u^2+1$ and $u^2-1$ divides $2(q+\epsilon)$. Thus $q(q^2+1)\leq 2(q+\epsilon)+2$, a contradiction. 

Let now $(5,q-\epsilon)=5$. Then $k_5(\epsilon q)-1=f(\epsilon q)(\epsilon q-1)/5$, where $f(x)=x^3+2x^2+3x+4$. Note that $(f(\epsilon q))_5=5$ (see, for example, \cite[Lemma 2.8]{25Pan_arxiv}), and so the right side of  
\eqref{e:o10eq} cannot be $u^4$. Hence $u$ is odd and $f(\epsilon q)(\epsilon q-1)/5=(u^4-1)/2$.

Since $(u^4-1)/2\in \omega(S)$, we have $|f(\epsilon q)|\in \omega(L)$.
The number $f(\epsilon q)$ is coprime to $p$, and so it divides  $\exp_{p'}(L)$, which is equal to $\Phi_5(\varepsilon q)\cdot \prod_{l=1}^4\Phi_l(q^2)/2$ (see, for example, \cite[Lemma 2.6]{20GrZv.t}). It is clear that $(f(\epsilon q),\Phi_5(\epsilon q))=5$. By Lemma \ref{l:igcd2}, the number $(f(\epsilon q),\Phi_l(q^2))$, where $l\in \{1,2,3,4\}$, divides $10$, $4$, $7\cdot 31$, or $2\cdot 97$. Also $f(\epsilon q)\equiv 2\pmod 4$. Thus $f(\epsilon q)$ divides $a=10\cdot 7\cdot 31\cdot 97$, whence  $q\leq 59$. Now it is straightforward to show that $f(\epsilon q)$ does not divide $a$. 

\textbf{4.} Let $S=O_{12}^+(u)$ and $R_x(L)\subseteq R_5(u)$, 
$R_w(L)\subseteq R_{10}(u)$, where $\{x,w\}=\{z,y\}$. 

Assume that $k_x(q)\neq k_5(u)$ or $k_w(q)\neq k_{10}(u)$. Then
$q^4/6<(u^5-1)/(11u-11)$. Also $(u^5+1)/2\leq m_w(L)\leq (q^4+1)(q+1)/2$. 
Thus $$u^5+1<(q^4+1)(q+1)<\left(\frac{6}{11}\cdot \frac{u^5-1}{u-1}+1\right)\left(\left(\frac{6}{11}\cdot \frac{u^5-1}{u-1}\right)^{1/4}+1\right),$$ whence $u\leq 3$ and $q^4<66$, a contradiction. 

So $k_x(q)=k_5(u)$ and $k_w(q)=k_{10}(u)$. If $L=O_{12}^+(q)$, this contradicts Lemma \ref{l:k5}. Let $L=O_{10}^\varepsilon(q)$ and choose $\epsilon$ so that $k_8(q)=k_5(\epsilon u)$. Then $k_5(\varepsilon q)=k_5(-\epsilon u)$ and by Lemma \ref{l:k5}, it follows that $(5,q-\varepsilon)\neq (5, u+\varepsilon)$. If $(5,u-\epsilon)=5$, then $(5,q-\varepsilon)=5$ and we have $q^4/2<k_8(q)=k_5(\epsilon u)<2u^4/5$ and $2q^5/>k_5(\varepsilon q)=k_5(-\epsilon u)>u^4/2$, a contradiction. 

Hence  $(5,u-\epsilon)=1$. Then
$(q^4-1)/2=u(u^4-1)/(u-\epsilon)=u(u^2+1)(u+\epsilon)$. If $r_4(q)$ divides  $u+\epsilon$, then $r_4(q)$ is adjacent to every $r_{10}(\epsilon u)$ in $GK(S)$ and so to $r_5(\varepsilon q)$, but this is not the case.   It follows that $r_4(q)\in\{v\}\cup R_4(u)$ and therefore $r_4(q)$ is adjacent to all primes in $\pi(S)\setminus (R_5(u)\cup R_{10}(u))$. Now observe that  $r=r_3(\varepsilon q)$ is coprime to $|\overline G/S|$ by Lemma \ref{l:aut} and so $r\in \pi(S)$ by Lemma \ref{l:large}. Since $rr_4(q)\not\in\omega(G)$, we conclude that $r\in R_5(u)\cup R_{10}(u)$ and then  $r$ is adjacent to  $r_8(q)$ or $r_{5}(\varepsilon q)$. This contradiction completes the proof of the proposition.
\end{proof}

Theorem \ref{t:main} follows from Lemma \ref{l:large} and Propositions  \ref{p:t7}--\ref{p:t5},  \ref{p:l8} and \ref{p:o10}. As we mentioned in the introduction, Theorem \ref{t:cor} follows from Theorem \ref{t:main} and Lemma \ref{l:red}. 

As we explained in the introduction, Theorem \ref{t:t4} is a consequence of Theorem \ref{t:cor} and previous results. To get $h(L)$ and $\mathcal H(L)$  for a simple group $L$ from Theorem \ref{t:t4} explicitly, one can use Tables 1--6 and 10 in \cite{23Survey} in the following way. If  $L$ is one of $L_3^\pm(q)$, then  $h(L)$ and $\mathcal H(L)$ is given the corresponding colums of Tables 1 and 2. If $L$ is one of $S_8(q)$, $O_9(q)$, $O_8^-(q)$ and $L\neq S_8(7^m)$, then $h(L)=\infty$ by Table 10. If $L=S_8(7^m)$, then $h(L)=1$ by  \cite{25Gr.t}. In the remaining cases, $h(L)<\infty$ and one should find the table corresponding to the type of $L$, find the section containing the condition of the form  ``$n\geq n_0$'' and take $h(L)$ and the description of $\mathcal H(L)$ in this section. Let, for example, $L=O_{10}^-(q)$, where $q=p^m$.  By the second section of Table~6, if $p=7$ or $(q^5+1,4)=4$, then $h(L)=1$; otherwise, $h(L)$ is equal to the number of divisors of $2(m)_2$ and $\mathcal H(L)$ is described via the last column. The justification of this method is as follows. In the sections of Tables 1--6 containing the condition of the form ``$n\geq n_0$'', the two last columns provide, in fact, a formula for the number $|\mathcal H(L)\cap\mathcal{AS}(L)|$ and a description of the set $\mathcal H(L)\cap\mathcal{AS}(L)$. These formula and description are valid for all $n\geq 5$ and so, as soon as we have proved that $L$ satisfies $\mathcal H(L)\subseteq\mathcal{AS}(L)$, the last columns give the solution to the recognition problem (cf. \cite[Conjecture 3.11]{23Survey}).

\section*{Acknowledgments}

The work was supported by the Russian Science Foundation, project
24-11-00127, \\https://rscf.ru/en/project/24-11-00127/.


\end{document}